%% file: main_arxiv.tex
\documentclass[twoside,preprint]{article}

\usepackage{aistats2027_arxiv}

\input{preamble_arxiv}
\usepackage{xr}
\newcommand{\shift}{0}
\newcommand{\aperture}{-1}

\begin{document}

\twocolumn[

\aistatstitle{\CurrentTitle}

\aistatsauthor{Jose Cribeiro-Ramallo \And Florian Kalinke \And Zoltán Szabó}

\aistatsaddress{Karlsruhe Institute of Technology \And Karlsruhe Institute of Technology \And London School of Economics} ]

\begin{abstract}%
Kernel methods are among the most powerful tools in machine learning and statistics, with a large number of successful applications. Their immense success stems from the flexible function class  associated to each kernel---its reproducing kernel Hilbert space (RKHS)---which facilitates statistical analysis, as well as from their computational tractability and applicability to many domains.
Multiple notions (such as characteristic, $L_p$-universal, and integrally strictly positive definite) capture the expressivity of kernels and their RKHSs and play a key role in understanding the statistical properties of kernel methods; these concepts and their relations are well-understood for bounded kernels.
Even though unbounded kernels have received significant attention over the past decade (for instance, in the construction of  kernel-based discrepancy and dependence measures such as the maximum mean discrepancy, the Hilbert-Schmidt independence criterion, and the kernel Stein discrepancy), surprisingly little is known about the relations of these notions in the unbounded case. In the present paper we tackle this severe bottleneck, establishing their relations under mild assumptions.
\end{abstract}

\section{INTRODUCTION}
Kernel methods \citep{steinwart08support,saitoh16theory,paulsen16introduction} are among the most flexible and powerful tools in machine learning and statistics, demonstrating superior performance in a wide variety of applications. These range from classification and regression with support vector machines (SVMs; \citealt{vapnik97support}) and non-linear principal component analysis (PCA; \citealt{scholkopf98nonlinear}), to more recent advancements in the representation of probability measures \citep{berlinet04reproducing} leading to state-of-the-art advancements in non-parametric statistics \citep{schrab25discrepancies}.
The key idea of kernel methods is to map the data into a (possibly infinite-dimensional) feature space, a reproducing kernel Hilbert space (RKHS; \citealt{aronszajn50theory}); their RKHSs are the source of their flexibility and power, rendering the study of their expressivity fundamentally important.

There exist different notions to capture the expressivity of an RKHS, usually involving properties of its associated kernel function (since the RKHS-kernel association is one-to-one). For example, an RKHS that can represent probability measures without any loss of information is said to have a \emph{characteristic} kernel \citep{fukumizu08kernel2,sriperumbudur10hilbert}, while one capable of approximating functions on different function classes (classically the space of continuous bounded functions, or more broadly the $p$-integrable functions; \citealt{steinwart01influence,carmeli10vector}) is said to have a \emph{universal} kernel.
Further, when the kernel is bounded, these properties are known to be related to (i) each other, (ii)  \emph{integrally strictly positive definite} (i.s.p.d.) kernels \citep{sriperumbudur11universality},  and (iii) to semimetric spaces of \emph{strong negative type} (s.n.t.; \citealt{lyons13distance,sejdinovic13equivalence}). These notions often help to determine the validity of kernel-based information theoretical measures (ITMs), meant as kernel discrepancy and dependence measures.

Indeed, most kernel ITMs rely on the kernel mean embedding \citep{berlinet04reproducing,smola07hilbert}, a lossless (under a characteristic kernel) map of Borel probability measures into an RKHS. The RKHS distance between two mean embeddings results in the so-called maximum mean discrepancy (MMD; \citealt{gretton12kernel}), known to be equivalent \citep{sejdinovic13equivalence} to energy distance \citep{baringhaus04new,szekely04testing,szekely05new}---also known as $N$-distance \citep{zinger92characterization,klebanov05ndistance}. When the kernel is bounded, MMD is a metric on the space of Borel probability measures provided that the kernel is characteristic, and it further metrizes the weak convergence of probability measures when the kernel is universal to an appropriate function class \citep[Theorem 9]{simon-gabriel23metrizing}.
If  MMD is applied to a product space---with a product kernel---one gets the Hilbert-Schmidt independence criterion (HSIC). HSIC was originally designed for $M=2$ components \citep{gretton05measuring,gretton05kernel}, and later extended to $M\geq2$ \citep{quadrianto09kernelized2,sejdinovic13kernel2,pfister18kernel}. When the product kernel is bounded, HSIC is a valid independence measure for $M=2$ if the kernel components are all characteristic \citep{lyons13distance}; requiring universality suffices for $M>2$ \citep{szabo18characteristic2}.

Recently, new kernel-based ITMs requiring unbounded kernels have grown in popularity. In particular, when applying MMD to a target and a sampling distribution, and selecting the kernel such that the mean embedding of the target vanishes (via a Stein operator; \citealt{stein72bound,chen21stein,anastasiou23stein}), one obtains the kernel Stein discrepancy (KSD; \citealt{chwialkowski16kernel,liu16kernelized}); the kernel employed in the KSD (known as the Stein kernel) is practically never bounded (\citealt[Example~1]{kalinke25nystromksd}; \citealt[Remark~4]{hagrass26stein}). To guarantee that KSD is a valid goodness-of-fit measure, the mean embedding of the target $\P_0$ has to be different than the mean embedding of any other probability measure (a weaker condition than regular characteristic-ness known as \emph{$\P_0$-separating} or \emph{characteristic with regard to $\P_0$}; \citealt{barp24targeted,cribeiro26minimax2}); sufficient conditions for this specific property are known on $\R^d$ \citep{chwialkowski16kernel,barp24targeted}. Recently, \citet{modeste24characterization} %
studied sufficient conditions for a specific class of unbounded kernels on $\R^d$ to be characteristic; yet,
in general, the relations of the characteristic, universality, i.s.p.d., and s.n.t.\ properties associated to unbounded kernels on topological domains are open. Our primary focus is to fill in this severe gap.

In particular, we make the following \tb{contributions}.
\begin{enumerate}[label=(\roman*)]
    \item Building on existing notions of expressivity of RKHSs (and filling them in where required: $\mF$-s.n.t.\ and $L_p$-universality of unbounded kernels),
    we establish their relations on general domains and for unbounded kernels, summarized in Table~\ref{tab:relations} and Figure~\ref{fig:summary}.

    \item Our results imply, for example, that (a)
    the i.s.p.d.\ and the characteristic properties are equivalent for sub-groups of finite signed measures [Remark~\ref{rem:4}\ref{remark4:(a)}], (b)
    no Stein kernel can be $L_p$-universal for any $p\in[1,\infty)$ [Remark~\ref{rem:9}\ref{rem:9:d}] and (c) the exponential kernel is $L_p$-universal for all $p\in[1,\infty)$ in $\R^d$ [Remark~\ref{rem:9}\ref{rem:9:e}].

    \item \citet[Corollary 4]{carmeli10vector} showed the equivalence among $L_p$-universality for all values of $p\in[1,\infty)$ for $c_0$-kernels on locally compact second-countable spaces. A surprising phenomenon revealed by our results is that this behavior persists even when the kernel is unbounded (under mild conditions).
\end{enumerate}

For performing the analysis, we introduce and
study a union of sets of finite signed measures that are absolutely continuous w.r.t.\ kernel-dependent sets of $p$-integrable measures, in turn satisfying $1-1/p$-integrability conditions.

The paper is structured as follows. Section~\ref{sec:notations} introduces the notations and definitions used through the paper, followed by our results in  Section~\ref{sec:results}, with proofs in the supplement (we locate each proof in the respective statement).

\begin{table*}
    \caption{Summary of our main results.}
    \label{tab:results-in-the-main}
    \begin{center}
    \begin{tabular}{lll}
    \toprule
    Statement & Content & Constraint \\
    \midrule
    Theorem~\ref{th:snt_iff_ispd} & $(\X,\rho_K)$ has $\mF$-s.n.t.\ $\Leftrightarrow$\ $K$ is $\mF$-i.s.p.d.\ & $\mF\subseteq [\mPb^2(K;\X)]^0$\\
    Theorem~\ref{th:ispd0_iff_char} & $K$ is $\mF$-characteristic $\Rightarrow$ $K$ is $\mF$-i.s.p.d.\ & $0 \in \mF\subseteq \mPb(K;\X)$\\
    Theorem~\ref{th:ispd0_iff_char} & $K$ is $\mF$-characteristic $\Leftarrow$ $K$ is $\mF$-i.s.p.d.\ & $(\mF,+)\subseteq \mPb(K;\X)$ is a group\\
    Theorem~\ref{th:char_p1_iff_pb0} & $K$ is $\mP^p(K;\X)$-characteristic $\Leftrightarrow$ $K$ is $[\mPb^p(K;\X)]^0$-characteristic & $p\in [1,\infty)$\\
    Theorem~\ref{th:univ} & $K$ is $L_p$-universal $\Leftrightarrow$ $K$ is $\A^p(K;\X)$-i.s.p.d.\ & $p\in [1,\infty)$, $\H_K$: separable\\
    Theorem~\ref{th:Ap=Pb} & $\A^p(K;\X) = \mPb(K;\X)$ & $p\in[1,\infty)$\\
    \bottomrule
    \end{tabular}
    \end{center}
\end{table*}

\section{NOTATIONS}\label{sec:notations}

In this section we introduce the following notations: $\N$, $\Np$, $[d]$, $\N_{\ge d}$, $\times_{i=1}^d\X_i$, $\X^d$, $p'$, $\fip{\cdot,\cdot}{}$, $\norm{\cdot}{2}$, $\mB_\X$, $\overline{A}$, $\mC(\X)$, $\mL_p(\X,\nu)$, $\norm{\cdot}{L_p}$, $L_p(\X,\nu)$, $\fip{\cdot,\cdot}{E^*,E}$, $S^*$,  $\M_b(\X)$, $\M_1^+(\X)$, $[\mF]^0$, $\delta_{x}$, $\tfrac{\d\P}{\d\Q}$, $\F^+$, $\F^-$, $|\F|$, $\E_\P$, $K$, $K(\cdot,x)$, $\H_K$, $\norm{\cdot}{\H_K}$, $\mPb^p(K;\X)$, $\mP^p(K;\X)$, $\mPb(K;\X)$, $\mP(K;\X)$, $\mu_K(\cdot)$, $\id$, $S_K$, $\Abs_{p'}(\P)$, $\A^p(K;\X)$, $\A(K;\X)$, $\rho$, $\mPb^p(\rho;\X)$, $\mP^p(\rho;\X)$, $\mPb(\rho;\X)$, $\mP(\rho;\X)$, $\rho_K$, $K_\rho$, $\phi$, $\beta_\phi$.
Along the way, we also define the notions of (in order of definition) $\mF$-characteristic, $\mF$-i.s.p.d., $L_p$-universality of unbounded kernels, and $\mF$-s.n.t.; their specialization to classical notions is summarized in Figure~\ref{tab:relations} for the convenience of the reader.

\begin{figure}[t]
\centering
    \begin{equation}
    \xymatrix@C=6.8pc @R=0.7pc{
    {\begin{array}{@{}c@{}}
     \text{$\mF$-char.}
   \end{array}} \ar@{-->}[r]^{\scriptscriptstyle \mF=\M_1^+(\X),\ K\text{ bounded}}_{\scriptscriptstyle\text{\citep{fukumizu08kernel2}}} & \text{characteristic}\\
    \text{$\mF$-i.s.p.d.}  \ar@{-->}@{-->}[r]^{\scriptscriptstyle \mF=\M_b(\X),\ K\text{ bounded}}_{\scriptscriptstyle\text{\citep{sriperumbudur10hilbert}}} & \text{i.s.p.d.}\\
    {\begin{array}{@{}c@{}}
     \text{$L_p$-universal}
   \end{array}} \ar@{-->}[r]^{\scriptscriptstyle \mP^p(K;\X) = \M_1^+(\X)}_{\scriptscriptstyle\text{\citep{carmeli10vector}}} & {\begin{array}{@{}c@{}}
     \text{$L_p$-universal}\\
     \text{(bounded $K$)}
   \end{array}}\\
   {\begin{array}{@{}c@{}}
     \text{$\mF$-s.n.t.\ }
   \end{array}}\ar@{-->}[r]^{\scriptscriptstyle \mF = [\mPb^2(\rho;\X)]^0}_{\scriptscriptstyle\text{\citep{sejdinovic13equivalence}}} & \text{ s.n.t.}\\
    {\begin{array}{@{}c@{}}
     \text{($\mF,\F$)-char.}
   \end{array}} \ar@{-->}[r]^{\scriptscriptstyle \mF=\mP(K;\X),\ \F=\P_0}_{\scriptscriptstyle\text{\citep{cribeiro26minimax2}}} & {\begin{array}{@{}c@{}}
     \text{characteristic}\\
     \text{w.r.t.\ } \P_0
   \end{array}}
        }
    \end{equation}
\caption{Relations between the definitions used with classical notions in the literature. The items are ordered by the year of the introduction of the concepts on the r.h.s.}
\label{tab:relations}
\end{figure}

\tb{General:} Let $\N=\{0,1,2,\dots\}$ denote the set of  natural numbers. For a positive integer $d\in\Np=\{1,2,\dots\}$, let $[d]=\{1,2,\dots,d\}$, and $\N_{\ge d} = \{d,d+1,d+2,\dots\}$. For a sequence of sets $(\X_i)_{i=1}^d$, we denote their Cartesian product by $\times_{i=1}^d\X_i$; when $\X_i=\X$ for all $i\in[d]$, we write $\X^d$. For a $p\in [1,\infty]$, its conjugate exponent  $p'\in [1,\infty]$ is defined as $\frac{1}{p}+\frac{1}{p'}=1$, with the convention $\frac{1}{\infty}\coloneq 0$. The inner product of vectors $\b a,\b b\in \R^d$ is $\fip{\b a, \b b}{}=\sum_{i\in [d]}a_i b_i$; the Euclidean norm of $\b a\in \R^d$ is $\norm{\b a}{2} = \sqrt{\fip{\b a, \b a}{}}$. Throughout the paper, unless stated otherwise, let $(\X,\tau_{\X})$ be a topological space endowed with the Borel $\sigma$-algebra $\mB_\X\coloneq\mB(\X,\tau_{\X})$. For a set $A\subseteq\X$ its closure $\overline A$ is defined as the smallest (meant w.r.t.\ the containing relation) closed subset of $\X$ containing $A$. A set $A\subseteq\X$ is called dense in $\X$ if $\overline A = \X$.
The space of continuous real-valued functions on $\X$ is denoted by $\mC(\X)$.
For a measure space $(\X,\mB_\X,\nu)$ with a finite non-negative measure $\nu$ and $p\in[1,\infty)$, $\mL_p(\X,\nu) \coloneq \mL_p(\X,\mB_\X,\nu)$ is the space of $p$-integrable real-valued functions on $\X$; in other words, those measurable functions $f:\X\to\R$ for which
\begin{equation}
\norm{f}{L_p} \coloneq \norm{f}{L_p(\X,\mB_\X,\nu)} \coloneq \left[\int_\X |f(x)|^{p} \d \nu(x)\right]^{1/p} <\infty.
\end{equation}
We say that $f,f'\in \mL_p(\X,\nu)$ are equivalent, written as $f\sim f'$,
if  $\norm{f-f'}{L_p}=0$, in other words, $f\sim f'$ if and only if (iff) $f(x)=f'(x)$ for $\nu$-almost all $x\in\X$ ($\nu$-almost everywhere; $\nu$-a.e.). We define the set
\begin{equation}
    L_p(\X,\nu) \coloneq L_p(\X,\mB_\X,\nu)\coloneq \left\{[f]_\sim:~f\in\mL_p(\X,\nu)\right\},
\end{equation}
where $[f]_\sim=\{f'\in\mL_p(\X,\nu):~f'\sim f\}$ is the equivalence class of $f$.
Further, for a finite non-negative measure $\nu$ the space of ($\nu$-)essentially bounded functions $L_{\infty}(\X,\nu) \coloneq L_{\infty}(\X,\mB_\X,\nu)$ consists of those measurable functions  $f:\X \to \R$  for which
\begin{align}
\norm{f}{L_{\infty}} &\coloneq \norm{f}{L_{\infty}(\X,\mB_\X,\nu)} \\
&\coloneq \inf\left\{a>0:~\nu\big(\{x\in\X: |f(x)|>a\}\big)=0 \right\} \notag
\end{align}
is finite.
For $p\in [1,\infty]$, $L_p(\X,\nu)$ enriched with the norm $\norm{[f]_\sim}{L_p}\coloneq \norm{f}{L_p}$ is a Banach space [\citealt[(A.33)]{steinwart08support}].
Consider two Banach spaces $E$ and $F$. A map $S:E\to F$ for which $S(\alpha x) =\alpha Sx$ and $S(x+y)=Sx +Sy$ holds for all $\alpha\in\R$ and $x,y\in E$ is called a linear operator; when the image of the closed unit ball on $E$ via $S$ is bounded, we call $S$ bounded.
The dual spaces $E^*$ and $F^*$ are the spaces of bounded linear functionals on $E$ and $F$, respectively. The evaluation of a dual element $x^*\in E^*$ at a point $x \in E$ is written as $\fip{x^*,x}{E^*,E}\coloneq x^*(x)$.
 Given a bounded linear operator $S:E\to F$, its adjoint operator $S^*:F^*\to E^*$ is defined as $\fip{S^*y^*,x}{E^*,E}=\fip{y^*,Sx}{F^*,F}$ for all $x \in E$ and $y^* \in F^*$.

\tb{Probability:}
The space of finite signed (resp.\ probability) measures on $(\X,\mB_\X)$ is denoted by $\M_b(\X)$ [resp.\ $\M_1^+(\X)$]. For any $\mF\subseteq \M_b(\X)$, we denote by $[\mF]^0$ the set of $\F\in\mF$ for which $\F(\X)=0$.
For a point $x\in \X$, the Dirac measure $\delta_x\in\M_1^+(\X)$ is defined as
\begin{align}
    \delta_x(A) =\begin{cases} 1 &\text{ if } x\in A,\\
    0&\text{ if } x\not \in A,\end{cases} \quad \text{ for }  A\in\mB_\X.
\end{align}
For $\P\in\M_1^+(\X)$, its support is defined as the smallest closed set whose complement has null $\P$-measure: a closed set $C\in \mathcal B_{\X}$ is the support of $\P$ if (i) $\P(\X\setminus C)=0$ and (ii) for any closed set $C_1\subseteq \X$ for which $\P(\X\setminus C_1)=0$, $C\subseteq C_1$ holds.\footnote{Note that the support of a Borel measure exists \citep[Proposition 2.3]{kozarzewski18existence} if the topology of $\X$ has a countable basis (in other words, $\X$ is second-countable); in metric spaces, the notion of second countability and separability coincide \citep[Proposition 2.1.4]{dudley04real}. \label{foot:2ndcount}}
Let $\P,\Q\in\M_1^+(\X)$, and let $\P$ be absolutely continuous w.r.t.\ $\Q$ ($\P\ll \Q$); we denote the corresponding Radon-Nikodym derivative by $\tfrac{\d \P}{\d \Q}$. Note that the Radon-Nikodym derivative is unique $\Q$-a.e.\ \citep[Theorem 5.5.4]{dudley04real} and can be extended to the case of $\P\in \M_b(\X)$ and $\Q$ being a finite non-negative measure \citep[Corollary 5.6.2]{dudley04real}.
Denote by $\F^+$ and $\F^-$ the non-negative measures of the Jordan decomposition of $\F\in\M_b(\X)$ and let $|\F| = \F^+ + \F^-$ denote the total variation of $\F$. Let $\H$ be a Hilbert space. The Bochner integral of a measurable function $f:\X\to\H$ w.r.t.\ $\F\in\M_b(\X)$ is defined as
\begin{equation}
\int_{\X} f(x)\d\F(x) = \int_{\X} f(x)\d\F^+(x) - \int_{\X} f(x) \d \F^-(x);
\end{equation}
it exists iff $\int_\X \norm{f(x)}{\X} \d|\F|(x)< \infty$ \citep[Chapter~II.2, Theorem~2]{diestel77vector}. Let $X \sim \P \in \mathcal M_1^+(\H)$. The expectation of $X$ is $\E_\P[X] \ =\ \int_{\H} x\d \P(x)$; the integral is meant in Bochner's sense.

\tb{Reproducing kernel Hilbert space:}
A function $K:\X^2\to\R$ is called a kernel if there exists a Hilbert space $\H$ and a feature map $\Phi:\X\to\H$ such that $K(x,x') = \fip{\Phi(x),\Phi(x')}{\H}$ for all $x,x'\in\X$. A Hilbert space $\H_K$ of $\X \to \R$ functions is called reproducing kernel Hilbert space (RKHS) associated to a kernel $K:\X^2\to \R$ if $K(\cdot,x)\in \H_{K}$ and $\fip{K(\cdot,x),f}{\H_{K}}=f(x)$ for all $x\in\X$ and $f\in\H_K$, where the latter relation is referred to as the reproducing property; $K(\cdot,x)$ denotes the function $x'\mapsto K(x',x)$ ($x\in\X$) and it is called the canonical feature map (of $K$); $K$ is called the reproducing kernel of $\H_K$.
A kernel is equivalent to a reproducing kernel, and the correspondence of kernels and RKHSs is one-to-one. The norm in $\H_K$ is denoted by $\norm{f}{\H_K} = \sqrt{\fip{f,f}{\H_K}}$ ($f \in \H_K$). We assume that all kernels in this manuscript are Borel-measurable.
Let $K:\X^2\to\R$ be a kernel.
For $p\in[1,\infty)$, let the set of finite signed  measures with finite $p$-th moments w.r.t.\ $K(\cdot,x)$ be
\begin{align}
\MoveEqLeft\mPb^p(K;\X) = \\[-2.5ex]
&=\Bigg\{ \F\in\M_{b}(\X):\int_{\X} \overbrace{\left\|K(\cdot,x)\right\|_{\H_K}^p}^{=K^{\frac{p}{2}}(x,x)}\d |\F|(x)<\infty\Bigg\},\notag
\end{align}
and the subset of probability measures $\mP^p(K;\X) = \mPb^p(K;\X)\cap \M_1^+(\X)$; we omit the exponent for both sets when $p=1$. Note that (i) $\F \in \mPb^p(K;\X) \implies |\F|\in \mPb^p(K;\X)$ [as $\F \in \M_b(\X) \implies |\F| \in \M_b(\X)$] and (ii) $\mPb^q(K;\X)\subseteq \mPb^{p}(K;\X)$ for any $q\geq p\ge 1$, as the $|\F|$-integrability of $\norm{K(\cdot,x)}{\H_K}^q$ implies the $|\F|$-integrability of $\norm{K(\cdot,x)}{\H_K}^p$ when $\F$ is finite.
The mean embedding w.r.t.\ $K$ is defined as
\begin{align}
    \mu_K: \mPb(K;\X) &\to \H_K, && \F \mapsto  \int_\X K(\cdot, x)\d \F(x).
\end{align}
We say that $K$ is integrally strictly positive definite (i.s.p.d.) to $\mF\subseteq \mPb(K;\X)$ (or simply, $\mF$-i.s.p.d.)
(\citealt[Definition 1]{szabo18characteristic2}; \citealt[Definition 5]{simon-gabriel18kernel}; \citealt[Definition 2.1]{steinwart21strictly})
if
\begin{equation}
    \norm{\mu_K(\F)}{\H_K}^2 = \int_\X\int_\X K(x,x')\d\F(x)\d\F(x')>0, \label{eq:def:ispd}
\end{equation}
for any $\F\in\mF\setminus\{0\}$.
$K$ is called characteristic to $\mF\subseteq \mPb(K;\X)$ (or simply $\mF$-characteristic) (\citealt[page 2270]{sejdinovic13equivalence}; \citealt[Definition 5]{simon-gabriel18kernel}) if $\mu_K$ is injective on $\mF$, in other words when $\mu_K(\F) = \mu_K(\F')$ iff $\F=\F'$ for all $\F,\F'\in\mF$.\footnote{If $K$ is bounded (in other words, $\sup_{x,x'\in \X}K(x,x') <\infty$), one has that $\mP(K;\X)=\M_1^+(\X)$, and, when taking $\mF=\M_1^+(\X)$, one gets back the (classical) characteristic property of kernels \citep{fukumizu08kernel2,sriperumbudur10hilbert}.}  $K$ is referred to as characteristic to $\mF\subseteq\mPb(K;\X)$ w.r.t.\ $\F$ [or simply $(\mF,\F)$-characteristic; \citealt[page~7]{barp24targeted}] when $\mu_K(\F)=\mu_K(\F')$ iff $\F=\F'$ for any $\F'\in\mF$.\footnote{When $K$ is a Stein kernel and $\mF=\mP(K;\X)$, one recovers the notion introduced by \citet[page 7]{cribeiro26minimax2}.\label{footnote:F-char-wrt-F}}
For a $p\in [1,\infty)$, $\P\in\mP^p(K;\X)$, and $\H_K$ separable\footnote{A continuous kernel $K$ on a separable topological domain $(\X,\tau_\X)$ is known \citep[Lemma~4.33]{steinwart08support} to be sufficient for $\H_K$ to be separable.\label{footnote:separable-Hk}}, consider the inclusion operator $\id:\H_K\to L_p(\X,\P)$, $h\mapsto [h]_\sim$. It is known \citep[Theorem~4.26]{steinwart08support} that $\id$ is continuous and $ \norm{\id}{} \leq \left[\int_{\X}K^{p/2}(x,x)\d\P(x)\right]^{1/p}$,
where the norm on the l.h.s.\ is the operator norm. Furthermore, the adjoint of this inclusion is the operator $S_K:L_{p'}(\X,\P)\to\H_K$ defined as \begin{align}\label{eq:S_K}
    (S_K g)(x)=\int_\X K(x,y)g(y)\d\P(y),
\end{align}
where $g\in L_{p'}(\X,\P)$ and $x\in\X$. We say that the kernel $K$ is $L_p$-universal if $\H_K$ is dense in $L_p(\X,\P)$ for all $\P\in\mP^p(K;\X)$.\footnote{When $K$ is bounded, $\mP^p(K;\X) = \M_1^+(\X)$, and one gets back the traditional notion of $L_p$-universality as introduced by \citet{carmeli10vector,sriperumbudur11universality}.}
For $p\in[1,\infty)$ and $\P\in\mP^p(K;\X)$, we denote the set of finite signed measures absolutely continuous w.r.t.\ $\P$ and with $L_{p'}$-integrable Radon-Nikodym derivative as
\begin{align}
\Abs_{p'}(\P) & = \left\{\F\in\M_b(\X): \F\ll \P, \frac{\d \F}{\d \P} \in L_{p'}(\X,\P)\right\}; \notag
\end{align}
the union of all $\Abs_{p'}(\P)$ with $\P\in\mP^p(K;\X)$ is denoted as
\begin{align}
\A^p(K;\X) & = \cup_{\P\in\mP^p(K;\X)} \Abs_{p'}(\P);
\end{align}
we omit the exponent when $p=1$.

\tb{(Semi)-metric space:} Let $\X$ be a non-empty set and let
$\rho:\X^2\to[0,\infty)$ be a map such that (i) $\rho(x,y)=0$ iff $x=y$ and (ii) $\rho(x,y) = \rho(y,x)$ for all $x,y\in\X$. Then, $(\X,\rho)$ is said to be a semimetric space and $\rho$ is called a semimetric on $\X$. If $\rho$ further fulfills (iii) $\rho(x,y)\leq \rho(x,z) + \rho(z,y)$ for all $x,y,z\in\X$, we call $(\X,\rho)$ a metric space and $\rho$ a metric on $\X$. Unless otherwise specified, throughout the manuscript, $(\X,\rho)$ stands for a semimetric space. Let $p\in[1,\infty)$, and define $\mPb^p(\rho;\X)=\{\F\in\M_b(\X):\text{ there exists } x_0\in\X \text{ s.t.\ } \int_\X \rho^{p/2}(x,x_0)\ \d|\F|(x)<\infty\}$\footnote{Note that $\mPb^q(\rho;\X)\subseteq \mPb^{p}(\rho;\X)$ for any $q\geq p$, as the $|\F|$-integrability of $\rho^q(x,x_0)$ for some $x_0\in\X$ implies the $|\F|$-integrability of $\rho^p(x,x_0)$ when $\F$ is finite.\label{footnote:P-b-q-rho:inclusion}}
 and $\mP^p(\rho;\X) = \mPb^p(\rho;\X)\cap \M_1^+(\X)$; we omit the exponent when $p=1$.
We say that $(\X,\rho)$ has negative type (n.t.) iff for all $n\in\Np$ such that $n\geq 2$, $\{x_i\}_{i=1}^n\subseteq\X$, and $\{\alpha_i\}_{i=1}^n\subset\R$ s.t. $\sum_{i=1}^n \alpha_i = 0$, we have $\sum_{i,j\in [n]} \alpha_i\alpha_j\rho(x_i,x_j) \leq 0$.
Further, we say that $(\X,\rho)$ has strong negative type (s.n.t.) to $\mF \subseteq \mPb^{2}(\rho;\X)$ (or simply $\mF$-s.n.t.) if $(\X,\rho)$ has n.t.\ and
\begin{align}
    \int_{\X}\int_{\X}\rho (x,y) ~\d\F(x)\d\F(y)<0, \label{eq:snt}
\end{align}
for all $\F \in \mF\setminus\{0\}$.\footnote{When $\mF=[\mPb^2(\rho;\X)]^0$ we recover the classical notion of s.n.t.\ space \citep[Definition 28]{sejdinovic13equivalence}. \label{footnote:snt-specialization} } We say that $\rho$ has been induced by a kernel $K:\X^2\to\R$ if for all $x,y\in \X$, one has
\begin{align}
\rho(x,y) &= K(x,x)+ K(y,y) - 2K(x,y) \label{eq:K2rho}
\end{align}
\citep[Corollary 16]{sejdinovic13equivalence}; in this case we write $\rho_K\coloneq \rho$.\footnote{$\rho_K$ is a valid semimetric iff  $K$ is non-degenerate, in other words if the map $x\mapsto K(\cdot,x)$ is injective \citep[Proposition 14]{sejdinovic13equivalence}; in this case $\rho_K$ is even of negative type \citep[Corollary~16]{sejdinovic13equivalence}.\label{footnote:rho-K-semi-metric:sufficient-condition}} Further, we say that a kernel $K:\X^2 \to \R$ is induced by the semimetric $\rho$ of negative type \citep[Definition 13]{sejdinovic13equivalence} if there exists some $x_0\in\X$ such that for all $x,x'\in \X$, one has
\begin{align}
    K(x,x') = \frac{1}{2}[\rho(x,x_0) + \rho(x',x_0) - \rho(x,x')]; \label{eq:K_rho}
\end{align}
in this case we write $K_\rho\coloneq K$.
Note that for $p\in\Np$ one has that $\mPb^p(K;\X)=\mPb^p(\rho;\X)$ when $K$ has been induced by $\rho$, and vice versa \citep[Propositions 14 and 20]{sejdinovic13equivalence}.
If a semimetric space $(\X,\rho)$ has n.t., then $(\X,\rho^{1/2})$ is a metric space of n.t.\ \citep[page~2266]{sejdinovic13equivalence}. Let $\H$ be a Hilbert space of functions; we call an injective map $\phi:(\X,\rho^{1/2})\to\H$ to be an isometric embedding of $(\X,\rho^{1/2})$ into $\H$ if $\rho(x,y)=\norm{\phi(x)-\phi(y)}{\H}^2$ for all $x,y\in\X$;
when $(\X,\rho)$ has n.t.\ such a $\phi$ always exists \citep[Proposition 3]{sejdinovic13equivalence}.
For any isometric embedding $\phi$, consider the map
\begin{align}
\beta_\phi:\F\in\mPb(\rho;\X)\mapsto \int_\X\phi(x)\d\F(x)\in\H, \label{eq:beta_phi}
\end{align}
where $\beta_\phi(\F)$ is known as the barycenter of $\F$ w.r.t.\ $\phi$.\footnote{Note that the r.h.s.\ of \eqref{eq:beta_phi} always exists for $\F\in\mPb(\rho,\X)$. Indeed the Bochner integral exists iff $\int_\X\norm{\phi(x)}{\H} \d |\F|(x) <\infty$. Taking $x_0\in\X$ s.t.\ $\int_\X\rho^{1/2}(x,x_0)\d |\F|(x)<\infty$ [guaranteed to exist as $\F\in\mPb(\rho;\X)]$ we have that $\int_\X\norm{\phi(x)}{\H}\d|\F|(x) = \int_\X\norm{\phi(x) + \phi(x_0) - \phi(x_0)}{\H}\d|\F|(x) \jover{a}{\leq} \int_\X\norm{\phi(x)-\phi(x_0)}{\H}\d|\F|(x) + \int_\X\norm{\phi(x_0)}{\H}\d|\F|(x) \jover{b}{=}  \int_\X\rho^{1/2}(x,x_0)\d|\F|(x) + \norm{\phi(x_0)}{\H}|\F|(\X) \jover{c}{<}\infty$, where (a) comes from the triangular inequality combined with the monotonicity and linearity of integration, (b) holds as $\phi$ is an isometric embedding, while (c) is by the choice of $x_0$ and as $\F$ (and hence $|\F|$) is a finite measure.} Note that when $K$ is non-degenerate, the canonical feature map $x \mapsto K(\cdot,x)$ is an isometric embedding of $(\X,\rho_K^{1/2})$ into the RKHS $\H_K$ \citep[page~2277]{sejdinovic13equivalence}.

\section{RESULTS} \label{sec:results}

\begin{figure*}
\centering
\begin{tikzpicture}[
    imp/.style    = {-{implies},       double equal sign distance},  %
    revimp/.style = {{implies}-,       double equal sign distance},  %
    iff/.style    = {implies-implies,  double equal sign distance},  %
    nd/.style     = {align=center},                        %
    deflab/.style = {midway, left, xshift=-1.5pt},         %
    vlab/.style   = {midway, above, sloped, rotate=180},   %
  ]
  \pgfmathsetmacro{\xL}{0.25+\shift+\aperture}    %
  \pgfmathsetmacro{\xM}{5.25+\shift}              %
  \pgfmathsetmacro{\xR}{10.25+\shift-\aperture}   %
  \pgfmathsetmacro{\xLa}{1.75+\shift+\aperture}   \pgfmathsetmacro{\xLb}{3.5+\shift}
  \pgfmathsetmacro{\xMa}{6.8+\shift}              \pgfmathsetmacro{\xMb}{8.5+\shift-\aperture}
  \def\yQ{0.5}  \def\yA{2.5}  \def\yB{4.5}  \def\yC{7.5}

  \node[nd] (pCharQ)  at (\xL,\yQ) {$\mP^{p}(K;\X)$-char.\\ w.r.t.\ $\Q\in\mP^p(K;\X)$};
  \node[nd] (pChar)   at (\xL,\yA) {$\mP^{p}(K;\X)$-char.};
  \node[nd] (pbpChar) at (\xL,\yB) {$\mPb^{p}(K;\X)$-char.};
  \node[nd] (pbChar)  at (\xL,\yC) {$\mPb(K;\X)$-char.};
  \node[nd] (pb0Ispd) at (\xM,\yA) {$[\mPb^{p} (K;\X)]^0$-i.s.p.d.};
  \node[nd] (pbpIspd) at (\xM,\yB) {$\mPb^{p}(K;\X)$-i.s.p.d.};
  \node[nd] (pbIspd)  at (\xM,\yC) {$\mPb(K;\X)$-i.s.p.d.};
  \node[nd] (snt)     at (\xR,\yA) {$(\X,\rho_{K})$ has\\ $[\mPb^{p} (\rho_K;\X)]^0$-s.n.t.};
  \node[nd] (ccUniv)  at (\xR,\yB) {$c_c$-universal\\ ($\X$: also second \\ countable and\\ Hausdorff)};
  \node[nd] (LpUniv)  at (\xR,\yC) {$L_p$-universal};

  \draw[imp] (\xL,\yA-0.5) -- node[deflab] {Def.} (\xL,\yQ+0.5);
  \draw[imp] (\xL,\yB-0.5) -- node[deflab] {Def.} (\xL,\yA+0.5);
  \draw[imp] (\xM,\yB-0.5) -- node[deflab] {Def.} (\xM,\yA+0.5);

  \foreach \x in {\xL,\xM}{
    \draw[imp] (\x-0.5,\yC-0.5) -- node[deflab] {Def.} (\x-0.5,\yB+0.5);
    \draw[revimp] (\x+0.5,\yC-0.5) -- node[vlab] {\small{$p>1$} (Lem.~\ref*{lem:monotonicity_mpb})} (\x+.5,\yB+0.5);
    \draw[line width=.5pt] (\x+.4,\yB+0.95) -- (\x+0.6,\yB+1.95);  %
  }

  \draw[iff] (\xLa,\yA) -- node[above] {Th.~\ref{th:ispd0_iff_char} + \ref{th:char_p1_iff_pb0}} (\xLb,\yA);
  \draw[iff] (\xLa,\yB) -- node[above] {Th.~\ref{th:ispd0_iff_char}} (\xLb,\yB);
  \draw[iff] (\xLa,\yC) -- node[above] {Th.~\ref{th:ispd0_iff_char}} (\xLb,\yC);

  \draw[imp] ([shift={(1.5,.3)}]pChar.north) -- node[midway,above,sloped] {Lem.~\ref*{lem:kc_ispd}} node[midway,below,sloped] {\scriptsize{For $K+c$ ($c>0$)}} ([shift={(-1.75,-0.1)}]pbpIspd.south);

  \draw[iff] (\xMa,\yA) -- node[above] {Th.~\ref{th:snt_iff_ispd}} node[below] {\small{$p\in\N_{\ge 2}$}} (\xMb,\yA);
  \draw[imp,KITblue] (\xMa,\yB+0.25) -- node[above] {\small{$p\in[1,\infty]$}} node[below] {\textcolor{black}{Pr.~\ref{prop:cc-univ}}} (\xMb,\yB+0.25);
  \draw[imp,KITgreen] (\xMb,\yB-0.25) -- node[below] {\small{$p=\infty$}} (\xMa,\yB-0.25);
  \draw[iff] (\xMa,\yC) -- node[above] {Th.~\ref{th:univ} + \ref{th:Ap=Pb}} (\xMb,\yC);
\end{tikzpicture}
\caption{Implications of our results for a continuous kernel $K$ on a separable topological space $\X$. Def.\ indicates that it follows from the definition. Char., Lem., Pr., and Th.\ stand for characteristic, lemma, proposition, and theorem, respectively.}
\label{fig:summary}
\end{figure*}

This section is dedicated to our results on the interaction of $\mF$-characteristic, $\mF$-i.s.p.d., $\mF$-s.n.t.\ properties, and $L_p$-universality for general kernels (allowing unbounded ones) and various measure families $\mF$; see Figure~\ref{fig:set_relations} for a visual summary of the relations with the most relevant $\mF$ families in this article.
Before delving into the details of these results, it is instrumental to discuss how the sets $\mPb^p(K;\X)$ and $\mP^p(K;\X)$ [with $p\in[1,\infty)$]---which the previous expressivity concepts heavily rely on---(i) arise naturally when dealing with mean embedding-based ITMs, and (ii) behave fundamentally differently for bounded and unbounded kernels, making their analysis intricate.

Indeed, the sets $\mP^p(K;\X)$ are the natural domains of kernel statistics: having finite $p$-th moment is tied to the existence of different kernel-based ITMs like MMD, HSIC, and KSD, as we elaborate in the following. We define the maximum mean discrepancy $\MMD$ as the mapping
\begin{align}
    (\P,\Q)\mapsto \norm{\mu_K(\P)-\mu_K(\Q)}{\H_K}.
\end{align}
In order for $\MMD$ to be well-defined on $(\P,\Q)\in \left[\M_1^+(\X)\right]^2$, the mean embedding of both probability measures must exist, in other words, one needs that $\P,\Q\in\mP(K;\X)$.

As another example, for $M\ge2$, HSIC is defined as the mapping
\begin{align}
    \P \mapsto \norm{\mu_{K}(\P) - \mu_{K}(\otimes_{m=1}^M\P|_m)}{\H_{K}},
\end{align}
where $\P\in\M_1^+(\times_{m=1}^M\X_m)$, $\{\P|_m\}_{m=1}^M$ are the marginals of $\P$,
$\otimes_{m=1}^M\P|_m$ is the product measure of these marginals, and $K=\otimes_{m=1}^MK_m$ is the product kernel [$(\otimes_{m=1}^MK_m)(x,y)\coloneq\prod_{m=1}^M K_m(x_m,y_m)$; $x=(x_m)_{m=1}^M,y=(y_m)_{m=1}^M\in\times_{m=1}^M \X_m$]. Therefore,  HSIC is well-defined   iff $\P,\, \otimes_{m=1}^M \P|_m\in \mP(K; \times_{m=1}^M\X_m)$; when $M=2$, $\P|_1\in\mP^2(K_1;\X_1)$ and $\P|_2\in \mP^2(K_2;\X_2)$ is a sufficient condition for HSIC to be well-defined \citep[page 2271]{sejdinovic13equivalence}. A similar higher-order sufficient condition can be derived for $M\ge 2$. Indeed, in this case
\begin{align}
    \MoveEqLeft  \int_{\times_{m=1}^M\X_m} \norm{K(\cdot,x)}{\H_{K}} \d \P(x)\\
    &\jover{a}{=} \int_{\times_{m=1}^M\X_m} \sqrt{K(x,x)} \d \P(x)\\
    & \jover{b}{=} \int_{\times_{m=1}^M\X_m} \prod_{m=1}^MK^{\frac{1}{2}}_m(x_{m},x_{m}) \d \P(x)
   \\& \jover{c}{\le} \prod_{m=1}^M \left[\int_{\X_m}K_m^{\frac{M}{2}}(x_m, x_m) \d \P|_m(x_m)\right]^{1/M}\hspace*{-0.1cm}, \label{eq:hsic_mge2}
\end{align}
where (a) follows from the fact that in a Hilbert space the norm is induced by its inner product and the reproducing property, (b) comes from the definition of product kernel and (c) holds by the generalized Hölder's  inequality (recalled in Lemma~\ref{lem:hoelder}, taking $r \leftarrow 1$ and $p_m \leftarrow M$ for all $m\in[M]$). Therefore, $\P|_m\in \mP^M(K_m;\X_m)\subseteq \mP(K_m;\X_m)$ for all $m\in[M]$ implies that $\otimes_{m=1}^M \P|_m\in \mP(K; \times_{m=1}^M\X_m)$, and hence \eqref{eq:hsic_mge2} shows that  $\P\in\mP(K;\X)$.

As the last example, we consider a kernel $K_{\P_0}$ such that $\E_{\P_0}\big[K_{\P_0}(\cdot,X)\big]=0$. $\KSD$ is defined as the mapping
\begin{align}
    \P \mapsto \norm{\mu_{K_{\P_0}}(\P)}{\H_{K_{\P_0}}},
\end{align}
where $\P\in\M_1^+(\X)$, and as before, it is well-defined as long as $\mu_{K_{\P_0}}(\P)$ exists, in other words, if $\P\in \mP(K_{\P_0};\X)$. Imposing instead the stricter condition of $\P\in\mP^2(K_{\P_0};\X)$ allows, for example, to obtain sharper minimax lower bounds for U-statistic-based KSD estimators \citep{gogolashvili26minimax}.

Having illustrated the need for understanding the sets $\mP^p(K;\X)\subsetneq \mPb^p(K;\X)$ [$p\in[1,\infty)$], our first result [summarized in Figure~\ref{fig:set_relations}(b)] sheds light on a sharp contrast in the behavior of bounded and unbounded kernels: for bounded kernels the mapping $p\mapsto \mPb^p(K;\X)$ is constant, while for unbounded kernels it is strictly non-increasing w.r.t.\ set inclusion, in other words $\mPb^q(K;\X)\subsetneq \mPb^p(K;\X)$ for $q>p$.

\begin{lemma}[Relationship between the $\mP^p(K;\X)$ and $\mPb^p(K;\X)$ sets; see Appendix~\ref{subsec:proof_monotonicity}]\label{lem:monotonicity_mpb}
    Let $(\X,\tau_\X)$ be a topological space
    with a kernel $K:\X^2\to\R$, and $1\leq p<q<\infty$.
    \begin{enumerate}[label=\roman*)]
        \item If $K$ is bounded, then  $\mP^q(K;\X)=\mP^p(K;\X)=\M_1^+(\X)$ and $\mPb^q(K;\X)=\mPb^p(K;\X)=\M_b(\X)$,
        \item if $K$ is unbounded, then $\mP^q(K;\X)\subsetneq \mP^p(K;\X)$ and $\mPb^q(K;\X)\subsetneq \mPb^p(K;\X)$.
    \end{enumerate}
\end{lemma}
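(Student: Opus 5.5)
Part i) reduces to a single domination argument. If $K$ is bounded, set $C\coloneq\sup_{x,x'\in\X}K(x,x')<\infty$. Then $K(x,x)\le C$, so $K^{q/2}(x,x)\le C^{q/2}$ for every $x\in\X$. Hence for any $\F\in\M_b(\X)$,
\begin{align}
\int_\X K^{q/2}(x,x)\,\d|\F|(x)\le C^{q/2}|\F|(\X)<\infty .
\end{align}
This gives $\M_b(\X)\subseteq\mPb^q(K;\X)$. Combining it with the inclusions $\mPb^q(K;\X)\subseteq\mPb^p(K;\X)\subseteq\M_b(\X)$ recorded after the definition yields equality throughout. Intersecting with $\M_1^+(\X)$ gives the statement for the $\mP$ sets.

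For ii), the inclusions $\subseteq$ are already known, so it suffices to exhibit a probability measure $\P\in\mP^p(K;\X)\setminus\mP^q(K;\X)$. The same $\P$ then also witnesses strictness for the $\mPb$ sets. The plan has two steps.

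First, I observe that unboundedness of $K$ forces $x\mapsto K(x,x)$ to be unbounded, by Cauchy--Schwarz: $|K(x,x')|\le\sqrt{K(x,x)K(x',x')}$. So I can pick points $x_n\in\X$ with $K(x_n,x_n)\to\infty$, and after passing to a subsequence assume $a_n\coloneq K^{1/2}(x_n,x_n)$ is strictly increasing with $a_n\ge 2^n$ (or any convenient growth).

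Second, I take a discrete measure $\P=\sum_n w_n\delta_{x_n}$ with weights chosen so that $\sum_n w_n a_n^p<\infty$ but $\sum_n w_n a_n^q=\infty$. A concrete choice is $w_n\propto a_n^{-q}$, restricted to a subsequence along which $\sum_n a_n^{p-q}<\infty$. Such a subsequence exists because $p-q<0$ and $a_n\to\infty$; I would take $a_n\ge 2^{n/(q-p)}$. Then:
\begin{itemize}
\item $\sum_n w_n a_n^q=\sum_n c=\infty$;
\item $\sum_n w_n a_n^p=c\sum_n a_n^{p-q}<\infty$;
\item total mass is finite, since $a_n\ge1$ gives $a_n^{-q}\le a_n^{p-q}$, so $\sum_n a_n^{-q}<\infty$ and $\P$ can be normalized with constant $c\coloneq\big(\sum_n a_n^{-q}\big)^{-1}$.
\end{itemize}
Dirac measures are Borel (as defined in the paper), so $\P\in\M_1^+(\X)$. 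Because the $x_n$ are distinct, $\int K^{r/2}(x,x)\,\d\P(x)=\sum_n w_n a_n^r$ for $r\in\{p,q\}$, which finishes the proof.

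The only delicate point is the reduction from ``$K$ unbounded'' (meaning $\sup_{x,x'}K(x,x')=\infty$, or possibly $\sup|K|=\infty$) to unboundedness of the diagonal. Cauchy--Schwarz handles either reading. The rest is a routine series construction, and the subsequence selection is what guarantees summability of the $p$-moment.
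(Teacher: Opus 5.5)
Your proof is correct and takes essentially the paper's approach. Part i) is the same domination argument, and for part ii) the paper applies its auxiliary Lemma~\ref{lem:f_unbounded_not_q_int} with $f=\norm{K(\cdot,x)}{\H_K}^p$ and exponent $q/p$, which builds the same discrete measure as yours (weights proportional to $K^{-q/2}(x_n,x_n)$ along a subsequence with $K^{(q-p)/2}(x_n,x_n)>n^2$, where you use $2^n$), while your Cauchy--Schwarz reduction to an unbounded diagonal makes explicit a step the paper only asserts.
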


We proceed by presenting our main results on the relations of
the $\mF$-characteristic, $\mF$-i.s.p.d.\ and $\mF$-s.n.t.\ properties (Section~\ref{sec:char-ispd-snt}), and $L_p$-universality (Section~\ref{sec:Lp-universality}), with a summary of the results in Table~\ref{tab:results-in-the-main} and an illustration of the key implications in Figure~\ref{fig:summary}.

\begin{figure*}[t]
\centering
\begin{subfigure}{.449\linewidth}\centering
    \begin{tikzpicture}[scale = 0.25]
   \def\firstellipse{(0,0) ellipse (13 and 8)}
   \def\secondellipse{(4,0) ellipse (8 and 6)}
   \def\thirdellipse{(-3.5,0) ellipse (8 and 6)}

  \begin{scope}
  \fill[gray5, fill opacity=0.1] \firstellipse;
  \fill[gray5, fill opacity=0.4] \secondellipse;
  \fill[gray5, fill opacity=0.4] \thirdellipse;
  \draw \firstellipse;
  \draw \secondellipse;
  \draw \thirdellipse;
  \end{scope}

   \node at (.25,1) (A) {$[\mPb^p(K;\X)]^0$};
   \node at (-6.5,2.7) (B) {$\mPb^p(K;\X)$};
   \node at (8.75,0) (C) {$[\M_b(\X)]^0$};
   \node at (-10,7) (D) {$\M_b(\X)$};

    \end{tikzpicture}
\caption{$\mPb^p(K;\X)$ and $[\mPb^p(K;\X)]^0$.}
\end{subfigure}~
    \begin{subfigure}{.440\linewidth}\centering
    \begin{tikzpicture}[scale = 0.25]
   \def\firstellipse{(0,0) ellipse (11 and 8)}
   \def\secondellipse{(1,0) ellipse (10 and 7)}
   \def\thirdellipse{(4,0) ellipse (7 and 5)}
   \def\fourthellipse{(9.5,0) ellipse (1.5 and 1)}
   \def\fifthellipse{(5.5,0) ellipse (5.5 and 3.5)}

  \begin{scope}
  \fill[gray5, fill opacity=0.1] \firstellipse;
  \fill[gray5, fill opacity=0.4] \secondellipse;
  \fill[gray5, fill opacity=0.4] \thirdellipse;
  \fill[gray5, fill opacity=0.4] \fifthellipse;
  \fill[black, fill opacity=0.5] \fourthellipse;
  \draw \firstellipse;
  \draw \secondellipse;
  \draw \thirdellipse;
  \draw \fourthellipse;
  \draw \fifthellipse;
  \end{scope}

   \node at (13.25,0) (A) {$\mPb^\infty(\X)$};
   \node at (-1.25,0) (B) {$\cdots$};
   \node at (-6,0) (C) {$\mPb^2(K;\X)$};
   \node at (-10,7) (D) {$\mPb(K;\X)$};
   \node at (3,0) (F) {$\mPb^p(K;\X)$};
   \node at (7,0) (B) {$\cdots$};

    \end{tikzpicture}
\caption{$\mPb^p(K;\X)$ and $\mPb^q(K;\X)$, with $q\geq p$.}
\end{subfigure}
\\
\caption{Relationships of the different sets relevant to our results. The set $\mPb^\infty(K;\X)$ is defined in \eqref{eq:Pb-infty} and discussed afterwards.}
\label{fig:set_relations}
\end{figure*}

\subsection{Characteristic, I.s.p.d., and S.n.t.\ Properties} \label{sec:char-ispd-snt}
Our first theorem equates a kernel being i.s.p.d.\ with the induced metric space having s.n.t.\ to the same set, which we will use later to provide a slight extension of a result by \citet{sejdinovic13equivalence} from $\mF = [\mPb^{2}(K;\X)]^0$ to any $\mF\subseteq [\mPb^{2}(K;\X)]^0$ [see Remark~\ref{rem:6}\ref{rem:6:2}].

\begin{theorem}[$\mF$-s.n.t.\ vs $\mF$-i.s.p.d.; see Appendix~\ref{subsec:proof_snt}]\label{th:snt_iff_ispd}
Let $(\X,\rho_K)$ be a semimetric space\textsuperscript{\ref{footnote:rho-K-semi-metric:sufficient-condition}} associated to a kernel $K:\X^2 \to \R$ and $\mF\subseteq [\mPb^{2}(K;\X)]^0$. Then, $(\X,\rho_K)$ has $\mF$-s.n.t.\ iff $K$ is $\mF$-i.s.p.d.
\end{theorem}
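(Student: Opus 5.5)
The plan is to prove the identity
\begin{equation*}
\int_\X\int_\X \rho_K(x,y)\,\d\F(x)\d\F(y) = -2\int_\X\int_\X K(x,y)\,\d\F(x)\d\F(y) = -2\norm{\mu_K(\F)}{\H_K}^2
\end{equation*}
for every $\F\in[\mPb^2(K;\X)]^0$. Both directions of the equivalence then follow at once, because the sign conditions \eqref{eq:snt} and \eqref{eq:def:ispd} are exact negatives of each other on $\mF\setminus\{0\}$. The negative-type requirement in the definition of $\mF$-s.n.t.\ is automatic here: $\rho_K$ is a semimetric, so $K$ is non-degenerate, and footnote~\ref{footnote:rho-K-semi-metric:sufficient-condition} (\citealt[Corollary~16]{sejdinovic13equivalence}) then gives that $(\X,\rho_K)$ has n.t. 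We also need $\mF\subseteq\mPb^2(\rho_K;\X)$ for the s.n.t.\ notion to make sense. This follows from $\mPb^2(K;\X)=\mPb^2(\rho_K;\X)$, noted in the paper. It can also be checked directly: $\rho_K(x,x_0)\le 2K(x,x)+2K(x_0,x_0)$ by $\norm{a-b}{}^2\le 2\norm{a}{}^2+2\norm{b}{}^2$, and $|\F|$ is finite.

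\textbf{Integrability.} Before splitting the integral we check that every term is absolutely integrable against $|\F|\otimes|\F|$. The terms $K(x,x)$ and $K(y,y)$ are integrable because $\int K(x,x)\,\d|\F|(x)<\infty$, which is exactly the defining condition of $\mPb^2(K;\X)$ with $p=2$, and $|\F|(\X)<\infty$. For the cross term, Cauchy--Schwarz gives $|K(x,y)|\le K^{1/2}(x,x)K^{1/2}(y,y)$. Its double integral is therefore at most $\big(\int K^{1/2}(x,x)\,\d|\F|(x)\big)^2$, which is finite since $\mPb^2(K;\X)\subseteq\mPb(K;\X)$. Hence $\rho_K$ is $|\F|\otimes|\F|$-integrable, and Fubini and linearity of the integral apply.

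\textbf{Expanding the integral.} Integrating \eqref{eq:K2rho} term by term gives
\begin{equation*}
\int\!\!\int K(x,x)\,\d\F(x)\d\F(y)=\F(\X)\int K(x,x)\,\d\F(x)=0,
\end{equation*}
since $\F(\X)=0$; the same holds for the $K(y,y)$ term. The remaining cross term is $-2\int\!\!\int K\,\d\F\,\d\F$. Finally, $\int\!\!\int K\,\d\F\,\d\F=\norm{\mu_K(\F)}{\H_K}^2$, as already recorded in \eqref{eq:def:ispd}. This follows from $\mu_K(\F)$ being a Bochner integral in $\H_K$, which exists because $\F\in\mPb(K;\X)$: pulling the inner product through the Bochner integral twice and using the reproducing property gives the claim.

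\textbf{Main obstacle.} The only delicate point is justifying the splitting of the double integral for an unbounded kernel; this is precisely why the assumption $\mF\subseteq[\mPb^2(K;\X)]^0$ is needed rather than $\mPb(K;\X)$, since otherwise the diagonal terms $K(x,x)$ need not be integrable. The integrability step above handles this. Everything else is algebra.
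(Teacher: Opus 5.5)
Your proposal is correct and follows the paper's proof. You expand $\rho_K$ via \eqref{eq:K2rho}, use $\F(\X)=0$ together with the finiteness of $\int_\X K(x,x)\,\d|\F|(x)$ (guaranteed by $\F\in\mPb^2(K;\X)$) to remove the diagonal terms, obtain $\int_\X\int_\X\rho_K\,\d\F\,\d\F=-2\int_\X\int_\X K\,\d\F\,\d\F$, and get negative type from the non-degeneracy of $K$. Your justification of the splitting is somewhat more careful than the paper's appeal to Tonelli on the first addend, since you check absolute integrability of all three terms against $|\F|\otimes|\F|$ (using Cauchy--Schwarz for the cross term), but the route is the same.
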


Our next result clarifies the relationship between a kernel being $\mF$-characteristic and $\mF$-i.s.p.d.\ for $\mF\subseteq \mPb^p(K;\X)$.

\begin{theorem}[$\mF$-i.s.p.d.\ vs $\mF$-characteristic; see Appendix~\ref{subsec:proof_ispd0_iff_char}]\label{th:ispd0_iff_char}
    Let $(\X,\tau_\X)$ be a topological space, $K:\X^2\to\R$ a kernel, and $\mF\subseteq \mPb(K;\X)$ such that (i) $0\in\mF$. Then, $K$ being $\mF$-characteristic implies that  $K$ is $\mF$-i.s.p.d.
    If $\mF$ is closed w.r.t.\ (ii) negation  ($\F\in\mF\implies -\F\in\mF$) and (iii) addition  ($\F_1,\F_2\in\mF \implies \F_1+\F_2\in\mF$), the converse holds.
\end{theorem}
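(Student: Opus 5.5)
The plan is to first record that $\mu_K$ is linear on $\mPb(K;\X)$ and that $\norm{\mu_K(\F)}{\H_K}^2$ equals the double integral in \eqref{eq:def:ispd}. Both parts of the theorem then reduce to a short algebraic argument.

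For linearity, the Bochner integral $\int_\X K(\cdot,x)\d\F(x)$ exists for every $\F\in\mPb(K;\X)$, since $\int_\X\norm{K(\cdot,x)}{\H_K}\d|\F|(x)<\infty$. Additivity in the measure then follows from the definition via the Jordan decomposition, using $|\F_1+\F_2|\le|\F_1|+|\F_2|$ to ensure that $\F_1+\F_2\in\mPb(K;\X)$. For the norm identity, I would write $\norm{\mu_K(\F)}{\H_K}^2=\fip{\mu_K(\F),\mu_K(\F)}{\H_K}$ and pull the Bochner integrals through the continuous linear functional $\fip{\cdot,h}{\H_K}$. Combined with the reproducing property, this gives $\int\int K(x,x')\d\F(x)\d\F(x')$. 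Joint integrability is guaranteed by $|K(x,x')|\le K^{1/2}(x,x)K^{1/2}(x',x')$, which is $|\F|\otimes|\F|$-integrable. This Fubini-type justification is the only technical step, and I expect it to be the mild main obstacle.

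Forward direction: assume $0\in\mF$ and that $K$ is $\mF$-characteristic. Take $\F\in\mF\setminus\{0\}$. Since $\mu_K(0)=0$ and $\F\neq 0$, injectivity on $\mF$ gives $\mu_K(\F)\neq\mu_K(0)=0$. Hence $\norm{\mu_K(\F)}{\H_K}^2>0$, which is exactly \eqref{eq:def:ispd}. So $K$ is $\mF$-i.s.p.d.

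Converse: assume closure under negation and addition and that $K$ is $\mF$-i.s.p.d. Let $\F,\F'\in\mF$ with $\mu_K(\F)=\mu_K(\F')$. By (ii) and (iii), $\G\coloneq\F+(-\F')=\F-\F'\in\mF$. Linearity gives $\mu_K(\G)=0$, so $\int\int K\d\G\d\G=\norm{\mu_K(\G)}{\H_K}^2=0$. The i.s.p.d.\ property then forces $\G=0$, that is, $\F=\F'$. The reverse implication $\F=\F'\Rightarrow\mu_K(\F)=\mu_K(\F')$ is trivial, so $\mu_K$ is injective on $\mF$ and $K$ is $\mF$-characteristic. Note that (ii) and (iii) together also imply $0=\F+(-\F)\in\mF$ whenever $\mF$ is nonempty, so no separate assumption on $0$ is needed here.
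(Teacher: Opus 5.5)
Your proposal is correct and follows essentially the same route as the paper. In the forward direction you compare $\F\in\mF\setminus\{0\}$ with $0\in\mF$ via injectivity, and in the converse you apply the i.s.p.d.\ property to $\F-\F'\in\mF$ using linearity of $\mu_K$ (by contraposition, which is equivalent); your extra justification of linearity and of $\norm{\mu_K(\F)}{\H_K}^2=\int_\X\int_\X K(x,x')\d\F(x)\d\F(x')$, which the paper takes for granted in \eqref{eq:def:ispd}, is a sound supplement rather than a different approach.
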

We note the following.

\begin{remark}\label{rem:4}~

\begin{enumerate}[label=(\alph*)]
 \item \tb{Group structure of $\mF$}:  $\mF$-s satisfying conditions (i)-(iii) in Theorem~\ref{th:ispd0_iff_char} form subgroups of $(\M_{b}(\X),+)$; examples include $\mPb^p(K;\X)$, $[\mPb^p(K;\X)]^0$, and $\Abs_{p'}(\P)$ (by Corollary~\ref{lem:abs_holds_ii-iii}). \label{remark4:(a)}

 \item \tb{Relationship to existing results}: Theorem~\ref{th:ispd0_iff_char} generalizes a known result by \citet[Theorem 6(ii)--(iii)]{simon-gabriel18kernel} from $\mF$ being a Banach space to $\mF$ being a group with addition. \label{remark4:(b)}

 \item \tb{Sufficient condition for s.n.t.}: For $p\in[2,\infty)$, a space $(\X,\rho)$ with n.t.\ has $[\mPb^p(\rho;\X)]^0$-s.n.t.\  iff $\beta_\phi$ is injective on $\mP^p(\rho;\X)$ (see Lemma~\ref{lem:inj_barycenter}).\footnote{Note that it is sufficient to have one $\phi$ such that $\beta_\phi$ is injective to prove that $(\X,\rho)$ has s.n.t.\ Indeed, this can be shown by using the fact that any two isometric embeddings $\phi_1:(\X,\rho^{1/2})\to\H_1$ and $\phi_2:(\X,\rho^{1/2})\to\H_2$ are equivalent up to isometries \citep[page 3292]{lyons13distance}.} For $p=2$, one gets back the statement by \citet[Proposition 3.1]{lyons13distance}.
 \end{enumerate}
\end{remark}

Our next result shows that for a kernel being characteristic to $\mP^p(K;\X)$ and to $[\mPb^p(K;\X)]^0$ are equivalent.

 \begin{theorem}[$\mP^p(K;\X)$-characteristic vs {$[\mPb^p(K;\X)]^0$}-characteristic; see Appendix~\ref{subsec:proof_char_p1_iff_pb0}]\label{th:char_p1_iff_pb0}
     Let $(\X,\tau_\X)$ be a topological space, $K:\X^2\to\R$ a kernel, and $p\in [1,\infty)$. Then, $K$ is $\mP^p(K;\X)$-characteristic iff $K$ is $[\mPb^p(K;\X)]^0$-characteristic.
 \end{theorem}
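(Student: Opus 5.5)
The plan is to prove the two implications separately. Both rest on the linearity of $\mu_K$ on $\mPb^p(K;\X)\subseteq\mPb(K;\X)$, which holds because the Bochner integral is linear, and on the Jordan decomposition of zero-mass measures.

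\emph{($\Leftarrow$)} Assume $K$ is $[\mPb^p(K;\X)]^0$-characteristic, and let $\P,\Q\in\mP^p(K;\X)$ with $\mu_K(\P)=\mu_K(\Q)$. Then $\F\coloneq\P-\Q\in\M_b(\X)$ satisfies $\F(\X)=0$. Since $|\F|\le\P+\Q$, we have
$$\int_\X K^{p/2}(x,x)\d|\F|(x)\le\int_\X K^{p/2}(x,x)\d\P(x)+\int_\X K^{p/2}(x,x)\d\Q(x)<\infty,$$
so $\F\in[\mPb^p(K;\X)]^0$. The zero measure also belongs to this set. By linearity, $\mu_K(\F)=\mu_K(\P)-\mu_K(\Q)=0=\mu_K(0)$, so injectivity on $[\mPb^p(K;\X)]^0$ gives $\F=0$, that is, $\P=\Q$.

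\emph{($\Rightarrow$)} Assume $K$ is $\mP^p(K;\X)$-characteristic, and let $\F_1,\F_2\in[\mPb^p(K;\X)]^0$ with $\mu_K(\F_1)=\mu_K(\F_2)$. Set $\F=\F_1-\F_2$. Then $\F\in[\mPb^p(K;\X)]^0$, because this set is closed under subtraction by the triangle inequality for total variation, and $\mu_K(\F)=0$. Suppose $\F\neq0$. Write $\F=\F^+-\F^-$. Since $\F(\X)=0$, we have $c\coloneq\F^+(\X)=\F^-(\X)$, and $c>0$ because $\F\ne0$. Define $\P=\F^+/c$ and $\Q=\F^-/c$. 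These are probability measures, and since $\F^\pm\le|\F|$, both have finite $p$-th moment, so $\P,\Q\in\mP^p(K;\X)$. Linearity gives $\mu_K(\P)-\mu_K(\Q)=\mu_K(\F)/c=0$, and the characteristic property forces $\P=\Q$, hence $\F^+=\F^-$. But $\F^+$ and $\F^-$ are mutually singular, so equality forces $\F^+=\F^-=0$, contradicting $\F\ne0$. Therefore $\F_1=\F_2$.

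There is no real obstacle here. The only step needing care is checking that each constructed measure lies in the right moment class, and this follows from the domination bounds $|\P-\Q|\le\P+\Q$ and $\F^\pm\le|\F|$. The mean embeddings exist because $\mPb^p(K;\X)\subseteq\mPb(K;\X)$ (remark (ii) after the definition of $\mPb^p(K;\X)$).
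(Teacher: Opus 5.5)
Your proof is correct and follows essentially the paper's argument. Both rest on the normalized Jordan decomposition $\F = c(\P-\Q)$ with $\P,\Q\in\mP^p(K;\X)$ (the paper's Lemma~\ref{lem:decomp_F}) together with linearity of $\mu_K$. The only difference is packaging: the paper routes both directions through the $[\mPb^p(K;\X)]^0$-i.s.p.d.\ property and Theorem~\ref{th:ispd0_iff_char}, whereas you argue injectivity directly using closure of $[\mPb^p(K;\X)]^0$ under subtraction, which is that theorem's argument inlined. The appeal to mutual singularity at the end is unnecessary, since $\P=\Q$ already gives $\F=c(\P-\Q)=0$.
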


We elaborate on this result in the following remark.
\begin{remark}~
\begin{enumerate}[label=(\alph*)]\label{rem:6}
\item \tb{Relationship to existing results:} Together with Theorem~\ref{th:ispd0_iff_char} [with the choice $\mF=\left[\mPb^p(K;\X)\right]^0$, see also Remark~\ref{rem:4}\ref{remark4:(a)}], Theorem~\ref{th:char_p1_iff_pb0} shows that a  kernel $K$ is $\mP^p(K;\X)$-characteristic iff it is $\left[\mPb^p(K;\X)\right]^0$-i.s.p.d.
This generalizes a known result by \citet[Lemma 8]{sriperumbudur10hilbert} and \citet[Theorem 8(i)--(ii)]{simon-gabriel18kernel} from bounded to unbounded kernels with finite $p$-th moment.\label{rem:6:1}

\item \tb{Characteristic kernels induce s.n.t.\ spaces and vice-versa}: Let $p\in\N_{\ge 2}$.
Note that Theorem~\ref{th:char_p1_iff_pb0} with Theorem~\ref{th:snt_iff_ispd}--\ref{th:ispd0_iff_char} (choosing $\mF=\mPb^p(K;\X)]^0$) and Proposition~\ref{prop:equiv_int_k_rho} imply that $K$ is $\mP^p(K;\X)$-characteristic iff its induced semimetric space has $[\mPb^p(\rho_K;\X)]^0=[\mPb^p(K;\X)]^0$-s.n.t. In the specific case of $p=2$, this result reduces to that by  \citet[Proposition 29]{sejdinovic13equivalence}. Conversely, Theorems~\ref{th:snt_iff_ispd}--\ref{th:ispd0_iff_char} (with substitutions $\mF\leftarrow[\mPb^p(\rho;\X)]^0= [\mPb^p(K_\rho;\X)]^0  $, $K \leftarrow K_\rho$) with Theorem~\ref{th:char_p1_iff_pb0} and Proposition~\ref{prop:equiv_int_k_rho} also imply that a $[\mPb^p(\rho;\X)]^0$-s.n.t.\ space induces a $\mP^p(K_\rho;\X)$-characteristic kernel.\footnote{Note that the semimetric generated by a kernel $K_\rho$ equals that of the original semimetric $\rho$ \citep[Proposition 14]{sejdinovic13equivalence}: $\rho=\rho_{K_{\rho}}$.} \label{rem:6:2}

\item \tb{Example of a $\mP^2(K;\X)$-characteristic kernel:}
Any separable Hilbert space $\H$ is known to have $[\mPb^2(\norm{\cdot_{1}-\cdot_{2}}{\H};\H)]^0$-s.n.t.\ \citep[Theorem 3.16]{lyons13distance}, and hence by the last implication in the previous bullet (choosing $p=2$), $K_{\norm{\cdot}{\H}}$ is implied to be $\mP^2({\norm{\cdot_{1}-\cdot_{2}}{\H}};\H)=\mP^2(K_{\norm{\cdot}{\H}};\H)$-characteristic (the equality comes from Proposition~\ref{prop:equiv_int_k_rho} while choosing $p=2$).\label{rem:6:3}

\end{enumerate}
\end{remark}

\subsection{$L_p$-universality} \label{sec:Lp-universality}
Our next theorem relates the $L_p$-universality of a kernel $K$ to it being $\A^p(K;\X)$-i.s.p.d.

\begin{theorem}[Necessary and sufficient conditions for $L_p$-universality; see Appendix~\ref{subsec:proof_univ}]\label{th:univ}
    Let $(\X,\tau_\X)$ be a topological space, $p\in[1,\infty)$, and $\H_K$ a separable RKHS\textsuperscript{\ref{footnote:separable-Hk}} on $\X$ with kernel $K:\X^2\to\R$. Then, $K$ is $L_p$-universal iff $K$ is $\A^p(K;\X)$-i.s.p.d.
\end{theorem}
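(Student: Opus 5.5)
The plan is to reduce $L_p$-universality, measure by measure, to the injectivity of the adjoint of the inclusion, and then to identify that adjoint with the mean embedding. Fix $\P\in\mP^p(K;\X)$. By \citet[Theorem~4.26]{steinwart08support}, recalled above, $\id:\H_K\to L_p(\X,\P)$ is bounded, and its adjoint is $S_K:L_{p'}(\X,\P)\to\H_K$ from \eqref{eq:S_K}. For $p\in[1,\infty)$ we have $L_p(\X,\P)^*=L_{p'}(\X,\P)$ (for $p=1$ this uses that $\P$ is finite, so $L_1^*=L_\infty$). By Hahn--Banach, the range of $\id$ is dense in $L_p(\X,\P)$ iff its annihilator is trivial, that is, iff $S_K=\id^*$ is injective on $L_{p'}(\X,\P)$. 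Consequently, $K$ is $L_p$-universal iff $S_K$ is injective for every $\P\in\mP^p(K;\X)$.

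Next I identify $S_K g$ with a mean embedding. For $g\in L_{p'}(\X,\P)$, let $\F_g$ be the measure with $\d\F_g=g\,\d\P$; it is finite since $g\in L_{p'}\subseteq L_1$ ($\P$ being a probability measure). Hölder's inequality gives
\[
\int_\X \norm{K(\cdot,x)}{\H_K}\,\d|\F_g|(x)=\int_\X K^{1/2}(x,x)\,|g(x)|\,\d\P(x)\le \Big[\int_\X K^{p/2}(x,x)\,\d\P(x)\Big]^{1/p}\norm{g}{L_{p'}}<\infty .
\]
Hence $\F_g\in\mPb(K;\X)$ and the Bochner integral $\mu_K(\F_g)=\int_\X K(\cdot,x)g(x)\,\d\P(x)$ exists. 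Evaluation is continuous on $\H_K$, so it commutes with the Bochner integral, and therefore $\mu_K(\F_g)(y)=\int_\X K(y,x)g(x)\,\d\P(x)=(S_Kg)(y)$. The map $g\mapsto\F_g$ is a bijection from $L_{p'}(\X,\P)$ onto $\Abs_{p'}(\P)$: it is well defined on equivalence classes, surjective by the Radon--Nikodym theorem, and $\F_g=0$ iff $g=0$ $\P$-a.e. Thus $S_K$ is injective iff $\mu_K(\F)\neq0$ for every $\F\in\Abs_{p'}(\P)\setminus\{0\}$. Since $\norm{\mu_K(\F)}{\H_K}^2$ equals the double integral in \eqref{eq:def:ispd} (by Fubini, justified by the same integrability bound), this is exactly the statement that $K$ is $\Abs_{p'}(\P)$-i.s.p.d.

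Finally I take the union over $\P\in\mP^p(K;\X)$. Being $\mF$-i.s.p.d. for $\mF=\cup_\P\Abs_{p'}(\P)=\A^p(K;\X)$ is the same as being $\Abs_{p'}(\P)$-i.s.p.d. for every such $\P$, because the defining condition is checked pointwise on the elements of $\mF$. Combining this with the first step proves both directions.

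The step I expect to need the most care is the identification of $\id^*$ with $S_K$, together with the claim that dense range is equivalent to injectivity of the adjoint, in the case $p=1$, where $p'=\infty$. I would check it directly by verifying $\fip{S_Kg,h}{\H_K}=\int_\X h\,g\,\d\P$ for $h\in\H_K$, using the reproducing property and the Bochner integral. The separability of $\H_K$ enters only through the measurability needed for this Bochner integral, and for $p=1$ the duality $L_1^*=L_\infty$ relies on $\P$ being finite.
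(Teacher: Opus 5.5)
Your proof is correct and follows essentially the same route as the paper. You reduce $L_p$-universality to injectivity of $S_K$ for each $\P\in\mP^p(K;\X)$ via \citet[Theorem~4.26]{steinwart08support}, factor $S_K$ as the mean embedding composed with the bijection $L_{p'}(\X,\P)\to\Abs_{p'}(\P)$, and take the union over $\P$; the only difference is that you read the i.s.p.d.\ condition directly off $\ker S_K=\{0\}$, whereas the paper detours through the $\Abs_{p'}(\P)$-characteristic property and Theorem~\ref{th:ispd0_iff_char}, which uses the group structure of $\Abs_{p'}(\P)$.
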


We comment on this result after showing a surprising phenomenon: $\A^p(K;\X)=\mPb(K;\X)$ under mild conditions, even when $K$ is unbounded, as captured in the following theorem.
\begin{theorem}[$\A^p(K;\X)$ equals $\mPb(K;\X)$; see Appendix~\ref{subsec:proof_Ap=Pb}] \label{th:Ap=Pb}
    Let $(\X,\tau_\X)$ be a topological space, $\H_K$ an RKHS with kernel $K:\X^2\to\R$, and $p\in[1,\infty)$. Then, $\A^p(K;\X) = \mPb(K;\X)$.
\end{theorem}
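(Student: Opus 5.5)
We prove the two inclusions separately.

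\emph{First inclusion, $\A^p(K;\X)\subseteq\mPb(K;\X)$.} Take $\F\in\Abs_{p'}(\P)$ with $\P\in\mP^p(K;\X)$, and write $g=\tfrac{\d\F}{\d\P}\in L_{p'}(\X,\P)$. Then $|\F|\ll\P$ with density $|g|$. By H\"older's inequality,
\begin{align}
\int_\X K^{1/2}(x,x)\d|\F|(x) = \int_\X K^{1/2}(x,x)|g(x)|\d\P(x) \le \left[\int_\X K^{p/2}(x,x)\d\P(x)\right]^{1/p}\norm{g}{L_{p'}} <\infty.
\end{align}
The first factor is finite because $\P\in\mP^p(K;\X)$, and the second because $g\in L_{p'}(\X,\P)$. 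For $p=1$ we have $p'=\infty$, and the same bound holds with the $L_\infty$ norm. Hence $\F\in\mPb(K;\X)$.

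\emph{Second inclusion, $\mPb(K;\X)\subseteq\A^p(K;\X)$.} Let $\F\in\mPb(K;\X)$ with $\F\neq 0$; the zero measure lies trivially in $\Abs_{p'}(\P)$ for any $\P\in\mP^p(K;\X)$, and such a $\P$ exists, e.g.\ any Dirac measure. We must construct $\P\in\mP^p(K;\X)$ such that $\F\ll\P$ and $\tfrac{\d\F}{\d\P}\in L_{p'}(\X,\P)$. The idea is to let $\P$ be $|\F|$ reweighted by a decreasing function of $k(x)\coloneq K^{1/2}(x,x)$. Set $w(x)=(1+k(x))^{-a}$ and $\d\P = w\,\d|\F|/Z$ with $Z=\int_\X w\,\d|\F|\in(0,|\F|(\X)]$. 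Then $\P$ is a probability measure and $|\F|\ll\P$, since $w>0$ everywhere. Writing $\d\F=s\,\d|\F|$ with $|s|=1$, we get $\tfrac{\d\F}{\d\P}=s Z/w$. We need two conditions:
\begin{align}
\text{(A)}\quad \int_\X k^p w\,\d|\F|<\infty, \qquad \text{(B)}\quad \int_\X w^{-p'} w\,\d|\F| = \int_\X (1+k)^{a(p'-1)}\d|\F|<\infty.
\end{align}
For (A), $k^p(1+k)^{-a}\le (1+k)^{p-a}$, which is bounded, and hence $|\F|$-integrable, as soon as $a\ge p$. For (B), $p'-1=1/(p-1)$, so the exponent is $a/(p-1)$, and $|\F|$-integrability holds if $a/(p-1)\le 1$, because $\int(1+k)\,\d|\F|<\infty$. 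We therefore need $p\le a\le p-1$, which is impossible. The naive power weight fails, and this is the main obstacle.

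The fix is to balance the two conditions by Young-type exponents: choose $w=(1+k)^{1-p}$. The exponent in (B) is then $(1-p)(1-p')=1$, since $(p-1)(p'-1)=1$, so (B) reads $\int(1+k)\,\d|\F|<\infty$, which holds. For (A), the integrand becomes $k^p(1+k)^{1-p}\le 1+k$, which is also integrable. Both conditions therefore follow exactly from $\F\in\mPb(K;\X)$, and the earlier contradiction came only from demanding boundedness in (A). For $p=1$ take $w\equiv 1$, so $\P=|\F|/|\F|(\X)$; then $\tfrac{\d\F}{\d\P}$ is bounded and $\int k\,\d\P<\infty$. We also check measurability of $w$, which follows from the Borel measurability of $K$, and verify $\int|\d\F/\d\P|^{p'}\d\P = Z^{p'-1}\int (1+k)\,\d|\F|$ directly. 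This gives $\F\in\Abs_{p'}(\P)\subseteq\A^p(K;\X)$ and completes the proof.
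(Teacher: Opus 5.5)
Your proposal is correct and follows essentially the paper's route: H\"older's inequality for $\A^p(K;\X)\subseteq\mPb(K;\X)$, the normalized total variation $|\F|/|\F|(\X)$ for $p=1$, and, for $p>1$, a reweighting of $|\F|$ by a strictly positive density decaying like $(1+k)^{1-p}$. Your weight $(1+k)^{1-p}=\big((1+k)^p\big)^{-1/p'}$ is the paper's $f_K=(1+k^p)^{-1/p'}$ with $1+k^p$ replaced by the comparable $(1+k)^p$; this makes the $L_{p'}$ condition an exact identity and spares you the paper's auxiliary inequality $(1+x^p)^{1/p}\le 1+x$.
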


With both results stated, we make the following notes.
\begin{remark}~\label{rem:9}
    \begin{enumerate}[label=(\alph*)]

        \item \tb{Equivalence of $L_p$-universality and relationship to existing results:} Theorems~\ref{th:univ}--\ref{th:Ap=Pb} imply that an $L_p$-universal kernel is also $L_q$-universal, for any $p,q\in[1,\infty)$. This surprising result shows that the equivalence among $L_p$-universalities is not exclusive to real-valued $c_0$-kernels on locally compact second-countable spaces, as implied by \citet[Theorem~1]{carmeli10vector}.
        \label{rem:9.a}

        \item \tb{Example of a non-$L_{p}$-universal kernel:} An interesting consequence of Theorems~\ref{th:univ}--\ref{th:Ap=Pb} is that $K$ cannot be $L_p$-universal if it is not $\mPb(K;\X)$-i.s.p.d. In particular, this implies that a Stein kernel $K_{\P_0}$ on $\X$ \citep{hagrass26stein,cribeiro26minimax}, for the target $\P_0$, cannot be $L_p$-universal (for any $p\geq 1$) since $\norm{\mu_{K_{\P_0}}(\P_0)}{\H_{K_{\P_0}}}\coloneq0$  and $\P_0\in\mPb(K_{\P_0};\X)\setminus \{0\}$. \label{rem:9:d}

       \item  \tb{Example of an $L_p$-universal unbounded kernel:} The exponential kernel $K(\b x,\b y) \coloneq e^{a\fip{\b x,\b y}{}}$ ($a>0$) on $\R^d\times\R^d$ is $\mPb(K;\R^d)$-i.s.p.d.\ by Lemma~\ref{lem:exp_is_ispd}. Hence,
       by combining Theorem~\ref{th:univ} and Theorem~\ref{th:Ap=Pb}, it is $L_p$-universal for every $p\in [1,\infty)$.
       Using that in compact Hausdorff spaces $c_c$-universality\footnote{$K$ is $c_c$-universal iff $\overline\H_K = \mC(\X)$ w.r.t.\ the topology of compact convergence. \label{footnote:cc-univ}} is equivalent to $L_p$-universality \citep[Figure 1.2]{sriperumbudur11universality}, this result is known to hold on any compact subset of $\R^d$ \citep[Example 1]{steinwart01influence}; our results imply its $L_p$-universality on the non-compact space $\R^d$.\label{rem:9:e}

       \item \tb{Class of $L_p$-universal unbounded kernels:} By Lemma~\ref{lem:kc_ispd} (taking $p=1$, $\X=\R^d$), if $K$ is $\mP(K;\R^d)$-characteristic, then the shifted kernel $K_c = K + c$ ($c>0$) is $\mPb(K_c;\R^d)$-i.s.p.d.; hence, it is also $L_p$-universal by Theorems~\ref{th:univ}--\ref{th:Ap=Pb}. For instance, kernels taking the form
       \begin{align}
           K(\b x, \b y) &= \int_{\R^d} (1-e^{i\fip{\b s,\b x}{}})(1-e^{-i\fip{\b s,\b y}{}}) \d \Lambda(\b s) \\
           &\quad + \b x^\T\bm \Sigma \b y, \label{eq:modeste_kernel}
       \end{align}
       where $i = \sqrt{-1}$, $\bm \Sigma$ is a positive definite matrix and $\Lambda$ is a non-negative Borel measure are known \citep[Proposition~9]{modeste24characterization} to be $\mP(K;\R^d)$-characteristic under mild conditions; an example \citep[Example~4]{modeste24characterization} of a $\mP(K;\R^d)$-characteristic kernel of the shape \eqref{eq:modeste_kernel} is the fractional Brownian one: \begin{align}K(\b x,\b y)=\norm{\b x}{2}^{\alpha} + \norm{\b y}{2}^{\alpha} - \norm{\b x-\b y}{2}^\alpha\end{align} with $\alpha \in (0,2)$. Hence, the shifted fractional Brownian kernel $K_c(\b x, \b y) = K(\b x, \b y) + c$ (with $c>0$) is $L_p$-universal for any $p \in[1,\infty)$.

    \end{enumerate}
\end{remark}

Note that the case of $p=\infty$ is not handled by Theorem~\ref{th:univ}. To this end, we introduce the set
\begin{align}
\MoveEqLeft \mPb^\infty(\X) \coloneq \mPb^\infty(K;\X) \\
&\hspace{-0.6cm} \coloneq \{\F\in\M_b(\X):~\F\text{ has compact support}\}, \label{eq:Pb-infty}
\end{align}
and observe that $\mPb^\infty(\X)\subsetneq \mPb^p(K;\X)$ for any $p\in[1,\infty)$ as long as $\X$ is not compact and $K$ is continuous. Then, we have the following implication coming from \citep[Table 1]{simon-gabriel18kernel}.

\begin{proposition}[Equivalence to $c_c$-universality]\label{prop:cc-univ}
    Let $(\X,\tau_\X)$ be a second-countable Hausdorff space and $\H_K$ an RKHS with a continuous kernel $K:\X^2\to \R$. Then,  $K$ is $\mPb^\infty(\X)$-i.s.p.d.\ iff $K$ is $c_c$-universal.\textsuperscript{\ref{footnote:cc-univ}}
\end{proposition}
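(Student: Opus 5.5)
The plan is to reduce the statement to the known characterization of $c_c$-universality by \citet[Table~1]{simon-gabriel18kernel}, namely: $K$ is $c_c$-universal iff the mean embedding $\mu_K$ is injective on the space $\M_c(\X)$ of compactly supported finite signed (Radon) measures. Two things then need checking: that $\M_c(\X)$ coincides with $\mPb^\infty(\X)$, and that injectivity of $\mu_K$ on this set is the same as $\mPb^\infty(\X)$-i.s.p.d.

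\emph{Step 1: $\mPb^\infty(\X)\subseteq\mPb(K;\X)$.} For $\F$ with compact support $C$, continuity of $K$ makes $x\mapsto K(x,x)$ bounded on $C$. Hence $\int_\X K^{1/2}(x,x)\,\d|\F|(x)\le \sup_{x\in C}K^{1/2}(x,x)\,|\F|(\X)<\infty$, so $\mu_K$ and the i.s.p.d.\ quantity \eqref{eq:def:ispd} are well-defined on $\mPb^\infty(\X)$. Here second countability (footnote~\ref{foot:2ndcount}) guarantees that supports exist, so \eqref{eq:Pb-infty} makes sense.

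\emph{Step 2: group structure.} The set $\mPb^\infty(\X)$ contains $0$ and is closed under negation. It is also closed under addition, since $\mathrm{supp}(\F_1+\F_2)\subseteq \mathrm{supp}\,\F_1\cup\mathrm{supp}\,\F_2$ and a closed subset of a compact set in a Hausdorff space is compact. Theorem~\ref{th:ispd0_iff_char} therefore gives that $K$ is $\mPb^\infty(\X)$-i.s.p.d.\ iff $K$ is $\mPb^\infty(\X)$-characteristic, i.e.\ iff $\mu_K$ is injective on $\mPb^\infty(\X)$.

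\emph{Step 3: matching with \citet{simon-gabriel18kernel}.} Their result is stated for Radon measures with compact support. The main obstacle is identifying these with our Borel measures of compact support. In a second-countable Hausdorff space, a finite Borel measure concentrated on a compact set $C$ lives on a compact metrizable space: $C$ is compact Hausdorff and second countable, hence metrizable by Urysohn. On such a space every finite Borel measure is regular, hence Radon. Conversely, compactly supported Radon measures are trivially in $\mPb^\infty(\X)$. I would also check that the duality they use, $(\mC(\X),\text{compact-open})^*=\M_c(\X)$, requires only Hausdorffness, which we assume. The claimed equivalence then follows from Steps 2 and 3.
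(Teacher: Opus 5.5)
Your route is the paper's own. The paper gives no argument beyond the pointer to \citet[Table~1]{simon-gabriel18kernel}. Your Steps~1--2 are correct, and so is the Radon identification in Step~3: compact subsets of a second-countable Hausdorff space are metrizable, so finite Borel measures living on them are Radon.

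The genuine gap is the final claim of Step~3, that identifying the dual of $(\mC(\X),\tau_{cc})$ with compactly supported measures ``requires only Hausdorffness''. Write $\ell_\F(f)=\int_\X f\,\d\F$.
\begin{itemize}
\item The map $\F\mapsto\ell_\F$ from $\mPb^\infty(\X)$ into this dual is onto under your hypotheses. A continuous $\ell$ satisfies $|\ell(f)|\le c\sup_C|f|$ for a single compact $C$; extend from $\mC(\X)|_C$ to $\mC(C)$ by Hahn--Banach and apply Riesz on $C$. Combined with Hahn--Banach separation, this yields the direction $\mPb^\infty(\X)$-i.s.p.d.\ $\Rightarrow$ $c_c$-universal.
\item The converse needs injectivity. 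From $\mu_K(\F)=0$ you get that $\ell_\F$ vanishes on $\H_K$, hence on $\mC(\X)$ by density, and you must conclude $\F=0$. This holds precisely when $\mC(\X)$ separates the points of $\X$, and a second-countable Hausdorff space need not have that property.
\end{itemize}

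In fact the statement is false as written. Golomb's space is the positive integers with the topology generated by the sets $\{a+nb:n\in\N\}$, $a,b$ coprime. It is countable, second-countable, Hausdorff and connected, so every continuous real function on it is constant. The kernel $K\equiv 1$ is continuous with $\H_K=\mC(\X)$, hence $c_c$-universal. Yet $\F=\delta_1-\delta_2\in\mPb^\infty(\X)\setminus\{0\}$ has $\mu_K(\F)=K(\cdot,1)-K(\cdot,2)=0$. More generally, if $x\neq y$ are not separated by $\mC(\X)$, then $K(\cdot,x)=K(\cdot,y)$ for every continuous $K$, so no continuous kernel is $\mPb^\infty(\X)$-i.s.p.d.

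The paper's one-line citation inherits this defect. The identification of $\mC(\X)^*$ with compactly supported Radon measures presupposes enough continuous functions on $\X$.

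The fix is to add the hypothesis that $\mC(\X)$ separates points, e.g.\ $\X$ regular (hence metrizable by Urysohn) or locally compact. Then, for the compact support $C$ of $\F$, Stone--Weierstrass makes $\mC(\X)|_C$ dense in $(\mC(C),\|\cdot\|_\infty)$. Since you showed $\F$ is Radon on $C$, uniqueness in the Riesz representation gives $\F=0$. With this addition your argument is complete.
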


\subsection*{Acknowledgments}
FK is funded by the KiKIT (The Pilot Program for Core-Informatics at the KIT) of the Helmholtz Association.

\bibliography{BIB/old/collected,BIB/old/collected_Jose,BIB/old/publications_own}

\input{supplement_arxiv}

\end{document}

%% file: preamble_arxiv.tex
\usepackage[round]{natbib}
\usepackage{subfiles}

\usepackage{multirow}
\usepackage{hhline}
\usepackage{multicol}
\usepackage{mathtools}
\mathtoolsset{showonlyrefs}
\usepackage{amsmath,amssymb,amsthm}
\usepackage{bm}
\usepackage{dsfont}
\usepackage{enumitem}
\usepackage[final]{showlabels} %
\usepackage{xcolor}         %
\usepackage{booktabs} %
\usepackage{hyperref}
\usepackage[mathscr]{eucal}    %
\usepackage{subcaption} %
\usepackage{soul} %
\usepackage[all]{xy} %

\usepackage{tikz}
\usetikzlibrary{arrows} %
\usetikzlibrary{arrows.meta} 
\usetikzlibrary{cd} %
\usetikzlibrary{positioning} %
\usepackage{circuitikz}%
\usepackage{pifont}  %
\usepackage{centernot}

\newtheorem{theoremA}{Theorem}[section]
\newtheorem{lemmaA}[theoremA]{Lemma}
\newtheorem{corollaryA}[theoremA]{Corollary}
\newtheorem{propositionA}[theoremA]{Proposition}

\definecolor{Brown}{rgb}{0.64,0.16,0.16}
\definecolor{OliveGreen}{rgb}{0.1,0.4,0.1}
\definecolor{KITgreen}{RGB}{0,150,130}
\definecolor{KITblue}{RGB}{70,100,170}

\renewcommand{\b}{\mathbf}    %
\newcommand{\R}{\mathbb{R}}   %
\renewcommand{\P}{\mathbb P} %
\newcommand{\Q}{\mathbb Q} %
\newcommand{\F}{\mathbb F} %
\newcommand{\E}{\mathbb{E}} %
\newcommand{\T}{\top} %
\renewcommand{\d}{\mathrm{d}} %
\newcommand{\tb}{\textbf}    %
\newcommand{\fip}[2]{\left\langle{#1}\right\rangle_{#2}} %
\newcommand{\norm}[2]{\left\|{#1}\right\|_{#2}} %
\newcommand{\MMD}[1][K]{\operatorname{MMD}_{#1}}

\newcommand{\KSD}[1][K_{\P_0}]{\operatorname{KSD}_{#1}}
\renewcommand{\H}{\mathscr{H}} %
\newcommand{\X}{\mathcal{X}} %
\newcommand{\N}{\mathbb{N}} %
\newcommand{\Np}{\mathbb{Z}_{+}}%
\newcommand{\Rnn}{\R_{\ge 0}} %
\renewcommand{\d}{\mathrm{d}} %
\newcommand{\M}{\mathcal{M}}
\newcommand{\jover}[2]{\overset{\text{(#1)}}{#2}} %

\newcommand{\mP}{\mathcal{P}_{1}} %
\newcommand{\mPb}{\mathcal{P}_{b}} %
\newcommand{\mF}{\mathscr{F}} %
\newcommand{\mC}{\mathcal{C}}
\newcommand{\mL}{\mathcal{L}}
\newcommand{\mB}{\mathcal{B}}

\DeclareMathOperator{\id}{id}

\newtheorem{theorem}{Theorem}
\newtheorem{lemma}[theorem]{Lemma} 
\newtheorem{proposition}[theorem]{Proposition} 
\newtheorem{remark}[theorem]{Remark}

\newcommand{\CurrentTitle}{Unbounded Characteristic and Universal Kernels}

\definecolor{gray1}{rgb}{0.97,0.97,0.97}
\definecolor{gray2}{rgb}{0.9,0.9,0.9}
\definecolor{gray3}{rgb}{0.75,0.75,0.75}
\definecolor{gray4}{rgb}{0.6,0.6,0.6}
\definecolor{gray5}{rgb}{0.45,0.45,0.45}

\DeclareMathOperator{\Abs}{Abs} %
\newcommand{\A}{\mathcal{A}} %
\newcommand{\nK}{\norm{K(\cdot,x)}{\H_K}} %

%% file: supplement_arxiv.tex
\setcounter{equation}{0}
\renewcommand{\theequation}{\thesection.\arabic{equation}} %

\appendix
\onecolumn

\aistatstitle{\CurrentTitle: \\
Supplementary Materials}

\begin{table}[h]%
    \caption{Summary of own and external results in the supplement.}
    \label{table:results-in-the-supp}
    \begin{center}
    \begin{tabular}{ l ll}
    \toprule 
    Statement & Type & Content \\
    \midrule
    Lemma~\ref{lem:f_unbounded_not_q_int} & own & unbounded $f\ge0$ admits a $\P$ with $f\in L_1(\X,\P)$ but $f\not\in L_q(\X,\P)$ for $q>1$\\
    Lemma~\ref{lem:decomp_F} & own & a measure in {$[\mPb^p(K;\X)]^0$} has a decomposition in $\mP^p(K;\X)$  \\
    Lemma~\ref{lem:decomp_F_rho} & own & a measure in {$[\mPb^p(\rho;\X)]^0$} has a decomposition in $\mP^p(\rho;\X)$  \\
    Lemma~\ref{lem:inj_barycenter} & own & injectivity of the barycenter implies s.n.t.\\
    Lemma~\ref{lem:lp_isom_abs} & own & $\Abs_{p}(\P)$ and $L_p(\X,\P)$ are isometrically isomorphic\\
    Corollary~\ref{lem:abs_holds_ii-iii} & own & $\Abs_{p'}(\P)$ is a group\\
    Lemma~\ref{lem:abs_p_subset_mpb} & own & a property of the space $\Abs_{p'}(\P)$ for measures in $\mP^p(K;\X)$\\
    Lemma~\ref{lem:Pbp-vs-Ap} & own & $\mPb^p(K;\X) \subseteq \A^p(K;\X)$\\
    Lemma~\ref{lem:exp_is_ispd} & own & the exponential kernel is $\mPb(K;\R^d)$-i.s.p.d.\\
    Lemma~\ref{lem:kc_ispd} & own & shifted $\mP^p(K;\X)$-characteristic kernels are $\mPb^p(K;\X)$-i.s.p.d. \\
    Proposition~\ref{prop:equiv_int_k_rho} & external & integrability of kernels and semimetrics\\
    Theorem~\ref{th:Fubini} & external & Fubini-Tonelli\\
    Theorem~\ref{th:ingo_4.26}  & external & properties of $S_K$ \\
    Lemma~\ref{lem:hoelder}  & external & a generalized Hölder's inequality\\
    \bottomrule
    \end{tabular}
    \end{center}
\end{table}

\begin{figure}[h]
    \centering
    \begin{tikzpicture}[
    x=2.4cm, y=1cm,
    every node/.style={inner sep=3pt, text height=1.6ex, text depth=0.3ex},
    res/.style={text=black},
    bres/.style={text=KITblue},
    cres/.style={text=KITgreen},
    dep/.style={-{Stealth[length=1.8mm, width=1.4mm]}, semithick},
    bdep/.style={dep, draw=KITblue},
    cdep/.style={dep, draw=KITgreen},
  ]
  \def\tip{3.5pt} %

  \node[res] (L1) at ( -2,  1) {Lemma~\ref{lem:monotonicity_mpb}};
  \node[res] (R6) at (-2,  0) {Remark~\ref{rem:6}};
  \node[res] (T3) at ( 0,  0) {Theorem~\ref{th:ispd0_iff_char}};
  \node[res] (R4) at ( 3,  0) {Remark~\ref{rem:4}};
  \node[res] (T5) at (-1, -2) {Theorem~\ref{th:char_p1_iff_pb0}};
  \node[res] (T7) at ( 1, -2) {Theorem~\ref{th:univ}};
  \node[res] (T2) at (-2, -4) {Theorem~\ref{th:snt_iff_ispd}};
  \node[res] (R9) at ( 0, -4) {Remark~\ref{rem:9}};
  \node[res] (T8) at ( 1, -4) {Theorem~\ref{th:Ap=Pb}};

  \node[bres] (B3)  at (1,  1)   {\ref{lem:decomp_F_rho}};
  \node[bres] (B4)  at (2, 1)  {\ref{lem:inj_barycenter}};
  \node[bres] (B1)  at ( -1,  1)   {\ref{lem:f_unbounded_not_q_int}};
  \node[bres] (B6)  at ( 2, -0.5) {\ref{lem:lp_isom_abs}};
  \node[bres] (B7)  at ( 2, -1.5) {\ref{lem:abs_holds_ii-iii}};
  \node[bres] (B8)  at ( 2, -2.5) {\ref{lem:abs_p_subset_mpb}};
  \node[bres] (B2)  at (-1.5, -3)   {\ref{lem:decomp_F}};
  \node[bres] (B10) at ( 0, -5)   {\ref{lem:exp_is_ispd}};
  \node[bres] (B9)  at ( 1, -5)   {\ref{lem:Pbp-vs-Ap}};
  \node[bres] (Bnew) at (-1, -4) {\ref{lem:kc_ispd}};

  \node[cres] (C3) at ( 2,   -3.5) {\ref{th:ingo_4.26}};
  \node[cres] (C1) at (-2.5, -5)   {\ref{prop:equiv_int_k_rho}};
  \node[cres] (C2) at (-1.5, -5)   {\ref{th:Fubini}};

  \draw[dep] (T3) -- (R6);
  \draw[dep] (T3) -- (R4);
  \draw[dep] (T3) -- (T5);
  \draw[dep] (T3) -- (T7);
  \draw[dep] (T5) -- (R6);
  \draw[dep] (T2) -- (R6);
  \draw[dep] (T7) -- (R9);
  \draw[dep] (T8) -- (R9);
  \draw[dep] (T5) -- (Bnew);
  \draw[dep] (R4) to [bend right=7] (R6);

  \draw[bdep] (B1)  -- (L1);
  \draw[bdep] (B3)  -- (B4);
  \draw[bdep] (B2)  -- (T5);
  \draw[bdep] (B6)  -- (B7);
  \draw[bdep] (B7)  -- (R4);
  \draw[bdep] (B10) -- (R9);
  \draw[bdep] (B9)  -- (T8);
  \draw[bdep] (Bnew) -- (R9);
  \draw[bdep] (B4) -- ([xshift=-3*\tip]R4.north);
  \draw[bdep] (B6) -- ([yshift= 1.5*\tip]T7.east);
  \draw[bdep] (B7) -- ([yshift= 0.5*\tip]T7.east);
  \draw[bdep] (B8) -- ([yshift=-0.5*\tip]T7.east);

  \draw[cdep] (C3) -- ([yshift=-1.5*\tip]T7.east);
  \draw[cdep] (C1) -- (T2);
  \draw[cdep] (C1) -- ([xshift= -1*\tip]R6.south);
  \draw[cdep] (C2) -- (T2);
  \draw[cdep] (C2) -- (B10);

\end{tikzpicture}
    \caption{Summary of the dependencies of our results and remarks. $R_1 \leftarrow R_2$ means that ``result $R_1$ depends on $R_2$''. Results and remarks from the main text are written in black, our auxiliary results are in blue, and external results are indicated by green.}
    \label{fig:results_relations}
\end{figure}

For the readers' convenience, a summary of the results in the supplement (both own and external) is provided in Table~\ref{table:results-in-the-supp}, the relations of all the results (both in the main and in the supplement) are visualized in Figure~\ref{fig:results_relations}.

\section{PROOFS} \label{sec:proofs}
This section is dedicated to the proofs of our statements in the main text. The proof of Lemma~\ref{lem:monotonicity_mpb} is in Appendix~\ref{subsec:proof_monotonicity}, that of Theorem~\ref{th:snt_iff_ispd} is in Appendix~\ref{subsec:proof_snt}, and that of Theorem~\ref{th:ispd0_iff_char} is in Appendix~\ref{subsec:proof_ispd0_iff_char}. Theorem~\ref{th:char_p1_iff_pb0} is proved in Appendix~\ref{subsec:proof_char_p1_iff_pb0}, Theorem~\ref{th:univ} is shown in Appendix~\ref{subsec:proof_univ}, and the proof of Theorem~\ref{th:Ap=Pb} can be found in Appendix~\ref{subsec:proof_Ap=Pb}.

\subsection{Proof of Lemma~\ref{lem:monotonicity_mpb}}\label{subsec:proof_monotonicity}
We prove the two statements separately.
  \paragraph{Part i).}
  As $\mPb^q(K;\X)\subseteq \mPb^{p}(K;\X)$ and $\mP^q(K;\X)\subseteq \mP^{p}(K;\X)$ for $q\ge p$ (see Section~\ref{sec:notations}), it is sufficient to show that $\mPb^{p}(K;\X)\subseteq \mPb^q(K;\X)$ and $\mP^{p}(K;\X)\subseteq \mP^q(K;\X)$ for $q\ge p$ and bounded $K$. Let $\F\in \mPb^{p}(K;\X)$ be arbitrary. Then
  \begin{align}
    \int_\X \norm{K(\cdot,x)}{\H_K}^q \d \F(x) &\jover a \le  \int_\X \norm{K(\cdot,x)}{\H_K}^q \d |\F|(x) \jover b\le \int_\X \sup_{x\in\X} \norm{K(\cdot,x)}{\H_K}^q \d |\F|(x) 
      \\&\jover c= \sup_{x\in\X}\norm{K(\cdot,x)}{\H_K}^q|\F|(\X) 
      \jover d < \infty, \notag
  \end{align}
  where (a) follows as the integrand is non-negative, (b) comes from upper bounding the integrand and the monotonicity of integration, (c) is by the  linearity of integration, and (d) holds as the assumed boundedness of $K$ and $\F\in\mathcal \M_b(\X)$ implies $\sup_{x\in\X}\norm{K(\cdot,x)}{\H_K}^q<\infty$ and $|\F|(\X)<\infty$, respectively. 
  The obtained inequality shows that $\F\in \mPb^q(K;\X)$, as was to be proved. As $\mP^p(K;\X)\subseteq \mPb^p(K;\X)$ and $\F$ was taken arbitrarily, the result also holds for  probability measures. Further, as $K$ is bounded, $\mPb(K;\X) = \M_b(\X)$ and $\mP(K;\X)=\M_1^+(\X)$ (as the kernel is integrable w.r.t.\ any finite signed measure).

 \paragraph{Part ii).} As stated in Section~\ref{sec:notations}, it holds that $\mPb^q(K;\X)\subseteq \mPb^{p}(K;\X)$ and $\mP^q(K;\X)\subseteq \mP^{p}(K;\X)$ for $q\ge p$; it remains to show that the inclusion is strict when $K$ is unbounded.
 Let $r=q/p>1$, implying that $pr = q$. As $K$ is unbounded, $\norm{K(\cdot,x)}{\H_K}^p:\X\to\Rnn$ is unbounded. Hence, by Lemma~\ref{lem:f_unbounded_not_q_int}, there exists a
 probability measure $\P\coloneq\P(r)\in\M_1^+(\X)$ such that 
    \begin{align}
        &\int_\X \norm{K(\cdot,x)}{\H_K}^p \d \P(x) <\infty, \text{ while } \label{eq:relationship:finiteness}\\
        & \int_\X \norm{K(\cdot,x)}{\H_K}^{pr} \d \P(x)   = \int_\X \norm{K(\cdot,x)}{\H_K}^{q} \d \P(x)=\infty.\label{eq:relationship:divergence}
      \end{align}
    Then, $\P\overset{\eqref{eq:relationship:finiteness}}{\in}\mP^p(K;\X)$ but $\P\overset{\eqref{eq:relationship:divergence}}{\notin}\mP^{q}(K;\X)$, proving the claim for probability measures. At the same time, as $\P\in\mP^{p}(K;\X)=\mPb^p(K;\X)\cap\M_1^+(\X)$ and $\mP^{q}(K;\X)=\mPb^q(K;\X)\cap\M_1^+(\X)$, we proved the claim for finite signed measures. 

\subsection{Proof of Theorem~\ref{th:snt_iff_ispd}}\label{subsec:proof_snt}
   Assume that $(\X,\rho_K)$ has $\mF$-s.n.t., and let $\F\in \mF\setminus \{0\}$. By Proposition~\ref{prop:equiv_int_k_rho} (valid as $p=2\in\N$), $\mPb^2(K;\X)=\mPb^2(\rho_K;\X)$; hence the double integral in \eqref{eq:snt} is well-defined for $\F$, and 
    \begin{align}
        0&>\int_{\X}\int_{\X}\rho_K(x,y) \d \F(x)\d \F(y)
        \jover a=\!\!\! \int_{\X}\!\int_{\X}\!\! K(x,x) + K(y,y) -2K(x,y) \d\F(x)\d\F(y)\\
        &\jover b=\!\!\! \int_{\X}\!\int_{\X}\!\! K(x,x)\d\F(x)\d\F(y) +  \int_{\X}\!\int_{\X}\!\! K(y,y) \d\F(x)\d\F(y) -2\int_{\X}\!\int_{\X}\!\! K(x,y) \d\F(x)\d\F(y)\\
        &\jover c=\!\!\! \int_{\X}\!\int_{\X}\!\! K(x,x)\d\F(y)\d\F(x) +  \int_{\X}\!\int_{\X}\!\! K(y,y) \d\F(x) \d\F(y) -2\int_{\X}\!\int_{\X}\!\! K(x,y) \d\F(x)\d\F(y)\\
        &\jover d=\! \underbrace{\F(\X)}_{=0}\int_{\X}\!\! K(x,x)\d\F(x) + \underbrace{\F(\X)}_{=0} \int_{\X}\!\! K(y,y) \d\F(y) -2\int_{\X}\!\int_{\X}\!\! K(x,y) \d\F(x)\d\F(y)
        \\&\jover e= -2\int_{\X}\!\int_{\X}\!\! K(x,y) \d\F(x)\d\F(y), \label{eq:K:F-ispd}
    \end{align}
    where (a) comes from the definition of kernel-induced semimetric, (b) is by the linearity of integration, (c) follows from applying Tonelli's theorem (recalled in Theorem~\ref{th:Fubini}) to the first addend, and (d) is implied by the homogeneity of the integral. Lastly, (e) comes by the finiteness of both integrals along the diagonal as $\F \in \mPb^2(K;\X)$.
    This implies that $\int_\X\int_\X K(x,y)\d \F(x)\d\F(y) > 0$, as was to be shown.

    Considering the opposite direction (in other words, assuming that $K$ is $\mF$-i.s.p.d.), (i) by the semimetric property of $\rho_K$,  $K$ is non-degenerate which itself implies that  $\rho_K$ is  of negative type,\textsuperscript{\ref{footnote:rho-K-semi-metric:sufficient-condition}} and (ii)  for any $\F \in \mF\setminus\{0\}$, $\int_{\X}\!\int_{\X}\! K(x,y) \d\F(x)\d\F(y)>0$ by the $\mF$-i.s.p.d.\ property of $K$ which yields that $\int_{\X}\int_{\X}\rho_K(x,y) \d \F(x)\d \F(y)<0$ by \eqref{eq:K:F-ispd}; these 2 properties mean that $(\X,\rho_K)$ has $\mF$-s.n.t. \qed

\subsection{Proof of Theorem~\ref{th:ispd0_iff_char}}\label{subsec:proof_ispd0_iff_char}
If $\mF=\{0\}$, both implications hold; in the following we assume that $\mF\neq\{0\}$.
\paragraph{$(\!\!\implies\!\!)$} 
Assume that $K$ is $\mF$-characteristic. Then, for all $\F_1\ne \F_2 \in \mF$,
\begin{align}
0 < \norm{\mu_K(\F_1)-\mu_K(\F_2)}{\H_K} \jover{a}{=} \norm{\mu_K(\F_1-\F_2)}{\H_K}, \label{eq:K-Fchar}
\end{align}
where (a) holds by the linearity of Bochner integration w.r.t.\ the measure. For all $\F\in\mF\setminus\{0\}$,  \eqref{eq:K-Fchar} with a substitution $\F_1 \leftarrow \F$, $\F_2 \leftarrow 0 \in \mF$  implies that
\begin{align}
    0&<\norm{\mu_K(\F)}{\H_K}^2  \overset{\eqref{eq:def:ispd}} = \int_\X\int_\X K(x,y)\d\F(x)\d\F(y)
\end{align}
proving that $K$ is $\mF$-i.s.p.d.

\paragraph{$(\!\!\impliedby\!\!)$} 
 Assume that $K$ is $\mF$-i.s.p.d.\ and $\mF$ is closed w.r.t.\ negation and addition.
 Let $\F_1\ne\F_2\in\mF$. Then, $\F\coloneq\F_1-\F_2\in\mF\setminus\{0\}$ ($\F\in \mF$ is implied by the  previous 2 closeness properties of $\mF$; $\F \ne 0$ holds as $\F_1 \ne \F_2$), and
 \begin{align}
    0 & \jover{a}{<}   \int_\X\int_\X K(x,y)\d \F(x)\d \F(y)\overset{\eqref{eq:def:ispd}}{=} \norm{\mu_K(\F)}{\H_K}^2
    \jover {b}{=}\norm{\mu_K(\F_1)-\mu_K(\F_2)}{\H_K}^{2} \jover {c}{\implies}
    \mu_K(\F_1)  \neq\mu_K(\F_2),
 \end{align}
 where
 (a) follows from the $\mF$-i.s.p.d.\ property  of $K$ (using that $\F\ne 0$),
 (b) holds by the linearity of the Bochner integration w.r.t.\ the measure, 
and (c) comes from the fact that $\norm{\cdot}{\H_K}$ is a norm. Thus, $\mu_K$ is injective on $\mF$, in other words  $K$ is $\mF$-characteristic. \qed

 \subsection{Proof of Theorem~\ref{th:char_p1_iff_pb0}} \label{subsec:proof_char_p1_iff_pb0}
 Let $p\in [1,\infty)$ be fixed. We start with an equality used throughout the proof. Indeed, suppose that $\F\in[\mPb^p(K;\X)]^0\setminus\{0\}$ and $\F = M(\P-\Q)$  with distinct $\P,\Q \in\mP^p(K;\X)$ and $M\in(0,\infty)$ (guaranteed by Lemma~\ref{lem:decomp_F}). Then
 \begin{align}
     \MoveEqLeft \norm{\mu_K(\F)}{\H_K} \jover{a}{=} \norm{M\mu_K(\P)-M\mu_K(\Q)}{\H_K} \jover{b}{=}  M\norm{\mu_K(\P)-\mu_K(\Q)}{\H_K},%
     \label{eq:mu_K(F)_is_linear}
 \end{align}
 where (a) comes by the linearity of the Bochner integration w.r.t.\ the measure, (b) is by the absolute homogeneity of the norm combined with $M>0$.%

\paragraph{$(\!\!\implies\!\!)$}
Let $K$ be $\mP^p(K;\X)$-characteristic; our goal is to show that it is $[\mPb^p(K;\X)]^0$-characteristic, in other words, that for any $\F_1 \ne \F_2$ in $[\mPb^p(K;\X)]^0$, $\mu_K(\F_1)\ne \mu_K(\F_2)$. Taking such $\F_1\ne \F_2$, for $\F\coloneq \F_1-\F_2\in  [\mPb^p(K;\X)]^0 \setminus \{0\}$, we have that
$\norm{\mu_K(\F)}{\H_K}>0$ by \eqref{eq:mu_K(F)_is_linear}, the imposed $\mP^p(K;\X)$-characteristic property of $K$, the fact that $\norm{\cdot}{\H_K}$ is a norm, and $M>0$. Noticing that any non-zero $\F \in [\mPb^p(K;\X)]^0$ can be written as\footnote{Indeed, by choosing $\F_1 \leftarrow \F\ne 0$, $\F_2 \leftarrow 0 \in  [\mPb^p(K;\X)]^0$.} 
$\F=\F_1-\F_2$ (with $\F_1 \neq \F_2$ in $[\mPb^p(K;\X)]^0$) the obtained property $0<\norm{\mu_K(\F)}{\H_K}^2\stackrel{\eqref{eq:def:ispd}}{=} \int_\X\int_\X K(x,x')\d\F(x)\d\F(x')$ shows that  $K$ is $[\mPb^p(K;\X)]^0$-i.s.p.d., which yields that $K$ is $[\mPb^p(K;\X)]^0$-characteristic  by Theorem~\ref{th:ispd0_iff_char} [see Remark~\ref{rem:4}\ref{remark4:(a)}]. 

\paragraph{$(\!\!\impliedby\!\!)$} 
Assume that $K$ is $[\mPb^p(K;\X)]^0$-characteristic; our goal is to prove that $K$ is $\mP^p(K;\X)$-characteristic, in other words, that for any $\P\ne \Q \in \mP^p(K;\X)$, $\mu_K(\P) \ne \mu_K(\Q)$.  Considering such arbitrary $\P\ne \Q$, define $\F= \P - \Q \in  [\mPb^p(K;\X)]^0\setminus\{0\}$. The imposed $[\mPb^p(K;\X)]^0$-characteristic property of $K$, implies that $K$ is $[\mPb^p(K;\X)]^0$-i.s.p.d.\ by Theorem~\ref{th:ispd0_iff_char}, and hence by $\F\neq0$ that 
\begin{align}0 & < \int_\X\int_\X K(x,x')\d\F(x)\d\F(x') \stackrel{\eqref{eq:def:ispd}}{=} \norm{\mu_K(\F)}{\H_K}^2 \overset{\eqref{eq:mu_K(F)_is_linear}}{=} \norm{\mu_K(\P)-\mu_K(\Q)}{\H_K}^2,
\end{align}
with $M=1$ in \eqref{eq:mu_K(F)_is_linear}.
Hence, $\mu_K(\P)\neq\mu_K(\Q)$ since $\norm{\cdot}{\H_K}$ is a norm, implying the injectivity of $\mu_K$ on  $\mP^p(K;\X)$ which we wanted to prove. \qed

\subsection{Proof of Theorem~\ref{th:univ}}\label{subsec:proof_univ}
We begin by introducing some pre-requisites.
Fix $p\in[1,\infty)$ and $\P\in\mP^p(K;\X)\subseteq \mP(K;\X)$. 
Further, consider the map $\imath_\P:  L_{p'}(\X,\P)\to\Abs_{p'}(\P)$, $[f]_\sim\mapsto \F_{\P,f}$ as in Lemma~\ref{lem:lp_isom_abs}, where $\F_{\P,f}$ is such that $\frac{\d \F_{\P,f}}{\d \P}=f$.
Recall from Lemma~\ref{lem:abs_p_subset_mpb} that $\F_{\P,f}\in\Abs_{p'}(\P)\subseteq\mPb(K;\X)$.

Lastly, note that the mean embedding $\mu_K$, $\imath_\P$, and $S_K$ form the following diagram
\begin{equation}\label{eq:sk_diag}
\begin{tikzcd}
{L_{p'}(\X,\P)} \arrow[r, "S_K"] \arrow[rd, "\imath_\P"'] & \H_K \\ 
& \Abs_{p'}(\P) \arrow[u, "\mu_K|_{\Abs_{p'}(\P)}"'] 
\end{tikzcd}
\end{equation}
where $\mu_K|_{\Abs_{p'}(\P)}$ denotes the restriction of $\mu_K$ to $\Abs_{p'}(\P)\subseteq \mPb(K;\X)$. Our goal is to prove the injectivity of various parts of this diagram. On one hand this will help us to prove the desired denseness (via Theorem~\ref{th:ingo_4.26}), and on the other it will allow us to prove the $\A^p(K;\X)$-i.s.p.d.\ property of $K$ (via Theorem~\ref{th:ispd0_iff_char}).   

\paragraph{$(\!\!\implies\!\!)$} 
Recall that by definition $\A^p(K;\X)=\cup_{\P\in\mP^p(K;\X)}\Abs_{p'}(\P)$, hence it is sufficient to prove that $K$ is $\Abs_{p'}(\P)$-i.s.p.d.\ for any $\P\in \mP^p(K;\X)$; this is what we detail next.
Indeed, as $K$ is $L_p$-universal, for any $\P\in\mP^p(K;\X)$, 
\begin{align}
    \overline{\H}_K = L_p(\X,\P) &\jover a\implies S_K \text{ is injective } 
                                  \jover b\implies \mu_K|_{\Abs_{p'}(\P)} \text{ is injective }
                                  \jover {def.}\iff \mu_K \text{ is injective  on } \Abs_{p'}(\P)\\
                                 &\jover{def.}\iff K \text{ is $\Abs_{p'}(\P)$-characteristic}
                                  \jover c\implies K \text{ is $\Abs_{p'}(\P)$-i.s.p.d.},
\end{align}
where (a) comes from Theorem~\ref{th:ingo_4.26}, (b) holds as $\mu_K|_{\Abs_{p'}(\P)}=S_K\circ \imath_\P^{-1}$ is the composition of injective functions (see Lemma~\ref{lem:lp_isom_abs}),
and (c) comes from Theorem~\ref{th:ispd0_iff_char}  [valid by Remark~\ref{rem:4}\ref{remark4:(a)}]. 

\paragraph{$(\!\!\impliedby\!\!)$} Since $K$ is $\A^p(K;\X)$-i.s.p.d., it is also $\Abs_{p'}(\P)$-i.s.p.d.\  for any $\P\in\mP^p(K;\X)$ as $\A^p(K;\X)=\cup_{\P\in\mP^p(K;\X)}\Abs_{p'}(\P)$. Further, since $\Abs_{p'}(\P)$ satisfies the conditions of Theorem~\ref{th:ispd0_iff_char} (see Corollary~\ref{lem:abs_holds_ii-iii}), $K$ is $\Abs_{p'}(\P)$-characteristic, in other words $\mu_K$ is injective on $\Abs_{p'}(\P)$.
Therefore, $S_K = \mu_K|_{\Abs_{p'}} \circ \imath_\P$ as the composition of 2 injective maps (by~\eqref{eq:sk_diag} and Lemma~\ref{lem:lp_isom_abs}) is injective for all $\P\in\mP^p(K;\X)$, and thus, $\overline{\H}_K=L_p(\X,\P)$ by Theorem~\ref{th:ingo_4.26} for all $\P\in\mP^p(K;\X)$. \qed

\subsection{Proof of Theorem~\ref{th:Ap=Pb}}\label{subsec:proof_Ap=Pb}
    Let $p\in[1,\infty)$ be fixed. Note that $\mPb(K;\X)\supseteq \A^p(K;\X)$ holds by Lemma~\ref{lem:abs_p_subset_mpb}, hence it is sufficient to prove that also $\mPb(K;\X)\subseteq \A^p(K;\X)$.

    For $p=1$, Lemma~\ref{lem:Pbp-vs-Ap}   yields the result, therefore we assume in the following that $p\in(1,\infty)$. 
       
    In order to prove that in this case $\mPb(K;\X)\subseteq \A^p(K;\X)$, we consider an arbitrary $\F\in\mPb(K;\X)\setminus\{0\}$\footnote{The case $\F=0$ is immediate, as it is absolutely continuous to any probability measure $\P$ and it has a bounded Radon-Nikodym derivative $\frac{\d 0}{\d \P} = 0$.} and construct a non-negative $\nu\in\M_b(\X)$ such that (i) $\nu\in\mPb^p(K;\X)$, (ii) $\F\ll\nu$, and (iii) $\frac{\d \F}{\d \nu}\in L_{p'}(\X,\nu)$. Then, the re-scaled measure $\P \coloneq \nu/\nu(\X)$ satisfies $\P\in\mP^p(K;\X)$, $\F\ll\P$, and $\frac{\d \F}{\d \P}\in L_{p'}(\X,\P)$, which implies that $\F \in \Abs_{p'}(\P) \jover {Def.}{\subseteq} \A^p(K;\X)$. The details are as follows.

    First, to construct such $\nu\in\M_b(\X)$, consider the $\X \to \R$ function 
    \begin{align}
    f_K(x) \coloneq \big(1+\nK^p\big)^{-1/p'}. \label{eq:f_K}
    \end{align}
    $f_K$ is well-defined and strictly positive as $1+\nK^p> 0$ for every $x\in\X$, and it is measurable as it is the composition of measurable functions. Furthermore, $f_K\in L_{1}(\X,|\F|)$:
    \begin{align}
        \int_\X f_K(x) \d|\F|(x) \overset{\eqref{eq:f_K}}{=} \int_\X \frac{1}{\Big(1+ \nK^p\Big)^{1/p'}} \d|\F|(x) 
        \jover a\leq  \int_\X 1^{-1/p'}\d|\F|(x) = \int_\X 1 \d|\F|(x) \jover b< \infty,
    \end{align}
    where (a) is implied by the denominator being lower bounded by $1$, and (b) holds as $|\F|$ is a finite measure. Next, let us define the non-negative measure 
    \begin{align}
    \nu (A) &= \int_A f_K(x) \d |\F|(x), \quad (A\in\mB_\X),
    \end{align}
    where by definition $\nu\ll|\F|$ and $\frac{\d \nu}{\d|\F|} = f_K$; note that $\nu(\X)>0$ as $f_K$ is strictly positive and $|\F|\ne0$ (as $\F\ne0$). We will show that (i) $\nu\in\mPb^p(K;\X)$, (ii) $\F\ll \nu$, and (iii) $f_K\in L_{p'}(\X,\nu)$ after introducing two helpful inequalities. First, for $x\in \X$,
     \begin{align}
       \MoveEqLeft \nK^p f_K(x) \overset{\eqref{eq:f_K}}{=} \frac{\nK^p}{\big(1+\nK^p\big)^{1/p'}} 
        \jover{a}{\leq} \frac{1+\nK^p}{\big(1+\nK^p\big)^{1/p'}} 
        = \big(1+\nK^p\big)^{1-\frac{1}{p'}} 
        \\&\jover b = \big(1+\nK^p\big)^{1/p}, \label{eq:nkp.f_k}
    \end{align}
    and next, for $x\in \X$,
    \begin{align}
        \big(1+\nK^p\big)^{1/p}  \jover c\leq  1+\nK, \label{eq:1+nk}
    \end{align}
    where (a) follows from increasing the numerator [$\nK^p\leq 1 + \nK^p$] of the non-negative fraction, (b) holds as $p'$ is the conjugate exponent of $p$, and (c) holds by the inequality $(1+x^p)^{1/p}\leq 1+ x$ for $x\ge0$ and $p\ge1$. To see this, notice that the inequality is equivalent to $1+x^p\le (1+x)^p$. Letting $f(x) = (1+x)^p-1-x^p$ on $[0,\infty)$, noting that $f(0) = 0$, and $f'(x) = p\left[(1+x)^{p-1}-x^{p-1}\right]\ge 0$ for $x\ge 0$ and $p\ge1$ shows the claim.

    \paragraph{Part (i) [$\nu \in\mPb^p(K;\X)$].} We continue by showing that $\nu \in\mPb^p(K;\X)$:
    \begin{align}
       \MoveEqLeft \int_\X \nK^p \d\nu(x) \jover{a}{=} \int_\X \nK^p f_K(x) \d|\F|(x) \jover{b}{\leq} \int_\X \Big(1+\nK^p\Big)^{1/p} \d |\F|(x) \\
        &\jover{c}{\leq} \int_\X 1+\nK \d |\F|(x) \jover {d}{=} \underbrace{|\F|(\X)}_{<\infty} +\int_\X \nK \d|\F|(x) \jover{e}{<} \infty,
    \end{align}
    where (a) follows from the change of measure property of Radon-Nikodym derivatives and $\frac{\d \nu}{\d |\F|} = f_K$, (b) comes from \eqref{eq:nkp.f_k} and the monotonicity of integration, (c) holds by \eqref{eq:1+nk} and the monotonicity of integration, (d) is by the linearity of integration, and (e) holds by the assumption that $\F\in\mPb(K;\X)$.

    \paragraph{Part (ii) [$\F\ll\nu$].}
    Next, to show that $\F\ll\nu$, we will show that $|\F|\ll \nu$ (the transitivity of the absolute continuity of measures and the fact that $\F\ll|\F|$ implies that $\F\ll|\F|\ll\nu$; \citealt[Proposition 3.9.b]{folland99real}).
    For any $A\in\mB(\X)$\begin{align}
        |\F|(A) = \int_A 1 \d |\F|(x) \jover a = \int_A \frac{f_K(x)}{f_K(x)} \d|\F|(x) \jover b = \int_A \frac{1}{f_K(x)} \d \nu(x), \label{eq:inv_rn}
    \end{align}
    where (a) comes by multiplying and dividing by $f_K(x)$ [valid as $f_K(x)>0$ for all $x\in\X$], and (b) holds by the change of measure property of Radon-Nikodym derivatives and $\frac{\d \nu}{\d |\F|} = f_K$. 
    
    \paragraph{Part (iii) [$\frac{\d \F}{\d\nu}\in L_{p'}(\X,\nu)$].} 
    Last, we  show that $\frac{\d \F}{\d\nu} \jover{a}{=} \frac{\d \F}{\d |\F|} \frac{\d |\F|}{\d \nu} 
    \jover{b}{=} \frac{\d \F}{\d |\F|} \frac{1}{f_K} \in L_{p'}(\X,\nu)$, where (a) holds by the chain rule of the Radon-Nikodym derivative \citep[Proposition 3.9.b]{folland99real}, (b) follows from \eqref{eq:inv_rn}. Indeed,
    \begin{align}
        \int_\X \left|\frac{\d \F}{\d |\F|}(x) \frac{1}{f_K(x)} \right|^{p'}\d\nu(x) &=  \int_\X \left|\frac{\d \F}{\d |\F|}(x)\right|^{p'} \left|\frac{1}{f_K(x)}\right|^{p'}\d\nu(x) 
        \jover{a}{=} \int_\X 1^{p'} \left|\frac{1}{f_K(x)}\right|^{p'}\d\nu(x) 
        \\&\jover{b}{=} \int_\X \left[f_K(x)\right]^{-p'}\d\nu(x) \jover {c}{=} \int_\X f_K(x) \left[f_K(x)\right]^{-p'}(x) \d |\F|(x) 
        \\& = \int_\X \left[f_K(x)\right]^{1-p'} \d |\F|(x) \overset{\eqref{eq:f_K}}{=} \int_\X \Big[\big(1+\norm{K(\cdot,x)}{\H_K}^p\big)^{-1/p'}\Big]^{1-p'}\d|\F|(x)
        \\&\jover{d}{=} \int_\X \big( 1+\nK^p \big)^{1/p} \d|\F|(x) \jover{e}{\leq}\int_\X 1+\nK \d|\F|(x) 
        \\& \jover{f}{=} \underbrace{|\F|(\X)}_{<\infty} + \int_\X \nK \d|\F|(x) \jover{g}{<} \infty,
    \end{align}
    where (a) comes by $\left|\frac{\d \F}{\d|\F|}\right| = 1$ \citep[Exercise 13.7]{bass11real}, (b) holds by $f_K>0$, (c) follows from the change of measure property of Radon-Nikodym derivatives and $\frac{\d \nu}{\d|\F|}=f_K$, and (d) is by the fact that $-\frac{1-p'}{p'}=\frac{p'-1}{p'}=1-\frac{1}{p'} = \frac{1}{p}$, (e) follows from \eqref{eq:1+nk} and the monotonicity of integration, the linearity of integration gives (f), and the fact that $\F\in\mPb(K;\X)$ yields (g).

    Hence, taking the probability measure $\P \coloneq \nu/\nu(\X)$ (guaranteed to exist as $\nu(\X)>0$) settles the statement. \qed

\section{AUXILIARY RESULTS}

In this section we introduce some auxiliary results used in our proofs. Lemma~\ref{lem:f_unbounded_not_q_int} shows that for any non-negative unbounded function there is always a probability measure for which it is exactly $L_1$-integrable (and not $L_q$-integrable with $q>1$), Lemma~\ref{lem:decomp_F} proves a special type of measure decomposition in $[\mPb^p(K;\X)]^0\setminus \{0\}$, Lemma~\ref{lem:decomp_F_rho} proves a similar decomposition in $[\mPb^p(\rho;\X)]^0\setminus\{0\}$, and Lemma~\ref{lem:inj_barycenter} relates the injectivity of the barycenter with the s.n.t.\ of the underlying semimetric space. Lemma~\ref{lem:lp_isom_abs} proves that $\Abs_p(\P)$ and $L_p(\X,\P)$ are isometrically isomorphic, and  Corollary~\ref{lem:abs_holds_ii-iii} uses this fact to show that $\Abs_p(\P)$ is a group with the addition. Lastly, Lemma~\ref{lem:abs_p_subset_mpb} shows a property of the space $\Abs_{p'}(\P)$ when $\P\in\mP^p(K;\X)$, Lemma~\ref{lem:Pbp-vs-Ap} proves that $\mPb^p(K;\X)$ is a subset of $\A^p(K;\X)$, Lemma~\ref{lem:exp_is_ispd} shows that the exponential kernel is $\mPb(K;\R^d)$-i.s.p.d., and Lemma~\ref{lem:kc_ispd} gives us a way to convert a $\mP^{p}(K;\X)$-characteristic kernel to a $\mPb^{p}(K;\X)$-i.s.p.d.\ one.

\begin{lemmaA}[Unbounded $f\ge0$ admits a $\P$ with $f\in L_1(\X,\P)$ but $f\not\in L_q(\X,\P)$ for $q>1$]\label{lem:f_unbounded_not_q_int}
Let $(\X,\tau_\X)$ be a topological space and $q>1$. Then, for any Borel-measurable unbounded function $f:\X\to\Rnn$, there exists a probability measure $\P\in\M_1^+(\X)$ such that \begin{align}
    \int_\X f(x)\d\P(x) <\infty,~ \text{ while }~ \int_\X f^{q}(x)\d\P(x) = \infty. \label{eq:unbounded:req-ed}
\end{align}
\end{lemmaA}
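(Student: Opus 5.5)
The plan is to build $\P$ as a countable convex combination of Dirac measures placed at points where $f$ is very large. Since $f$ is unbounded, we can pick a sequence $(x_n)_{n\in\Np}\subseteq\X$ with $f(x_n)\ge 2^{n}$ and, passing to a subsequence if needed, $f(x_{n+1})\ge 2 f(x_n)$ so the values grow geometrically. Each Dirac measure $\delta_{x_n}$ is a Borel probability measure on $(\X,\mB_\X)$ without any assumption on the topology, so no structure beyond measurability is used. We will choose weights $w_n>0$ with $\sum_n w_n=1$ and set $\P\coloneq\sum_{n} w_n\delta_{x_n}$. This is a countable convex combination of probability measures and hence lies in $\M_1^+(\X)$; countable additivity follows from Tonelli for nonnegative series. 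For nonnegative measurable $g$ we have $\int_\X g\,\d\P=\sum_n w_n g(x_n)$.

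The weights will be tuned to the values $a_n\coloneq f(x_n)$. We take $w_n\coloneq c\,a_n^{-1}n^{-2}$, where $c$ normalizes the sum; this sum converges since $a_n\ge 1$, so $\sum_n a_n^{-1}n^{-2}\le\sum_n n^{-2}<\infty$. Then $\int f\,\d\P=c\sum_n n^{-2}<\infty$. For the second integral, $\int f^q\,\d\P=c\sum_n a_n^{q-1}n^{-2}$, and since $a_n\ge 2^n$ and $q>1$, the terms satisfy $a_n^{q-1}n^{-2}\ge 2^{n(q-1)}n^{-2}\to\infty$. The series therefore diverges, which gives \eqref{eq:unbounded:req-ed}.

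The main point needing care is the choice of the $x_n$. The statement only says $f$ is unbounded, which means $\sup_x f(x)=\infty$, so for every $n$ there is some $x_n$ with $f(x_n)\ge 2^n$; the growth of $a_n$ is what makes the $q$-th moment blow up while the first moment stays finite. Repeated points are harmless: if $x_n=x_m$, the weights simply add and the series identity for the integral still holds.
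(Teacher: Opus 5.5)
Your proof is correct and follows essentially the same route as the paper. Both build a countable mixture of Dirac masses at points where $f$ is large, with weights chosen so that the first moment converges while the $q$-th moment diverges. The paper uses weights proportional to $f^{-q}(x_n')$ at points with $f^{q-1}(x_n')>n^2$, so every $f^q$-term is the same constant; you use weights proportional to $f(x_n)^{-1}n^{-2}$ with $f(x_n)\ge 2^n$, so the $f^q$-terms tend to infinity. The geometric-growth refinement $f(x_{n+1})\ge 2f(x_n)$ you mention is never used and can be dropped.
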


\begin{proof}[Proof (of Lemma~\ref{lem:f_unbounded_not_q_int})]
    Consider an arbitrary $q>1$.
    As the non-negative $f$ is unbounded, there exists a sequence $\{x_n\}_{n\in\Np}\subseteq \X$ such that $\lim_{n\to \infty}f(x_n)=\infty$ and a subsequence $\{x'_n\}_{n\in\Np}$ such that $f^{q-1}(x'_n)>n^2$ for all $n\in\Np$ [and thus $f(x'_n)>1$ for all $n\in \Np$]. Define 
    \begin{align}
        A &\coloneq \sum_{n=1}^\infty \frac{1}{f^{q-1}(x'_n)}\jover a < \sum_{n=1}^\infty \frac{1}{n^2} \jover b<\infty,\label{eq:A}
    \end{align}
    where (a) follows from the definition of $\{x'_n\}_{n\in\Np}$, and (b) holds by the well-known Basel problem. Then, taking the non-negative measure $\nu = \sum_{n=1}^\infty \frac{1}{A f^q\left(x'_n\right)}\delta_{x'_n}$ [notice that $A>0$ and $f^q(x_n')>0$ by $f(x_n')>1$ for all $n\in \Np$, hence is their product $Af^q\left(x'_n\right)$], we have that \begin{align}
        \int_\X f(x) \d\nu(x) = \frac{1}{A} \sum_{n=1}^\infty \frac{f(x'_n)}{f^q(x'_n)} = \frac{1}{A} \underbrace{\sum_{n=1}^\infty \frac{1}{f^{q-1}(x'_n)}}_{\overset{\eqref{eq:A}}{=}A} =1 < \infty,
    \end{align}
    while \begin{align}
        \int_\X f^q(x) \d\nu(x) = \frac{1}{A} \sum_{n=1}^\infty \frac{f^q(x'_n)}{f^q(x'_n)} = \frac{1}{A} \sum_{n=1}^\infty 1 \stackrel{(a)}{=} \infty
    \end{align}
    by using in (a) that $A>0$. To obtain a probability measure $\P\in\M_1^+(\X)$ with the required properties in \eqref{eq:unbounded:req-ed}, it suffices to take $\P \coloneq \nu/\nu(\X)$, after noting that \begin{align}
        \nu(\X) = \int_\X 1 \d\nu(x) = \underbrace{\frac{1}{A}\sum_{n=1}^\infty \frac{1}{f^q(x'_n)}}_{>0} \jover{a}{<} \frac{1}{A} \underbrace{\sum_{n=1}^\infty \frac{1}{f^{q-1}(x'_n)}}_{\overset{\eqref{eq:A}}{=A}}  =1 \implies \nu(\X) \in (0,1),
    \end{align}
    where (a) follows from the fact that  $f^q(x'_n)>f^{q-1}(x'_n)$ since $f(x'_n) > 1$ for all $n\in \Np$. 
\end{proof}

\begin{lemmaA}[A measure in {$[\mPb^p(K;\X)]^0$} has a decomposition in $\mP^p(K;\X)$]\label{lem:decomp_F}
    Let $(\X,\tau_\X)$ be a topological space, $K:\X^2 \to \R$ a kernel, $p\in [1,\infty)$, and $\F\in[\mPb^p(K;\X)]^0\setminus\{0\}$. Then, there exist two distinct measures $\P,\Q\in\mP^p(K;\X)$ and $M\in (0,\infty)$ such that $\F = M(\P - \Q)$.
\end{lemmaA}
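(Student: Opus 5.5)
The plan is to use the Jordan decomposition $\F=\F^+-\F^-$ and to normalize both parts. Since $\F(\X)=0$, we have $\F^+(\X)=\F^-(\X)\eqcolon M$. Since $\F\neq 0$, we have $|\F|(\X)=2M>0$, so $M\in(0,\infty)$; finiteness follows from $\F\in\M_b(\X)$. We then set $\P\coloneq \F^+/M$ and $\Q\coloneq\F^-/M$. These are non-negative measures of total mass one, hence $\P,\Q\in\M_1^+(\X)$, and by construction $\F=M(\P-\Q)$.

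Next we check the moment condition. Because $\F^\pm\le|\F|$ setwise, monotonicity of the integral of the non-negative function $x\mapsto\norm{K(\cdot,x)}{\H_K}^p$ gives
\begin{align}
\int_\X \norm{K(\cdot,x)}{\H_K}^p\d\P(x)\le \frac{1}{M}\int_\X\norm{K(\cdot,x)}{\H_K}^p\d|\F|(x)<\infty,
\end{align}
where finiteness is exactly $\F\in\mPb^p(K;\X)$. The same bound holds for $\Q$, so $\P,\Q\in\mP^p(K;\X)$.

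Finally, $\P\neq\Q$. The Jordan parts $\F^+$ and $\F^-$ are mutually singular: by the Hahn decomposition there is a set $A$ with $\F^+(\X\setminus A)=0$ and $\F^-(A)=0$. Hence $\P(A)=1$ while $\Q(A)=0$, so $\P\neq\Q$. Alternatively, $\P=\Q$ would give $\F=0$, contradicting the assumption; this is the more direct argument and suffices on its own.

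No step is a real obstacle. The only point needing care is that $M>0$, which is exactly where $\F\neq0$ and $\F(\X)=0$ enter.
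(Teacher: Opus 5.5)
Your proposal is correct and follows essentially the same route as the paper's proof. The paper likewise normalizes the Jordan parts, $\P=\F^+/M$ and $\Q=\F^-/M$ with $M=\F^+(\X)=\F^-(\X)$, bounds the $p$-th moments via $\F^\pm\le|\F|$, and gets $\P\neq\Q$ directly from $\F\neq 0$.
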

\begin{proof}[Proof (of Lemma~\ref{lem:decomp_F})]
Given $\F\in[\mPb^p(K;\X)]^0$,  let its Hahn-Jordan decomposition be $\F=\F^+-\F^-$, where $\F^+$ and $\F^-$ are two non-negative measures satisfying $\F^+(\X) = \F^-(\X)\eqcolon M\in(0,\infty)$ [the equality $\F^+(\X) = \F^-(\X)$ is guaranteed by $\F(\X) = 0$, $M\in(0,\infty)$ comes from $\F\in \M_{b}(\X)\setminus\{0\}$ \citep[Theorem~10.7]{wheeden15measure}].  Further, $\F^+, \F^-\in \mPb^p(K;\X)$ as $\int_\X \norm{K(\cdot,x)}{\H_K}^p\d\F^+(x)\le \int_\X\norm{K(\cdot,x)}{\H_K}^p\d|\F| < \infty$; the claim for $\F^-$ follows similarly.

Then, for the two measures defined as $\P\coloneq\F^+/M$ and $\Q\coloneq \F^-/M$, one has that
\begin{enumerate}[label=(\roman*)]
    \item $\P(\X)= \F^+(\X)/M = M/M= 1$ and $\Q(\X) = \F^-(\X) / M = M/M = 1$.
    \item $\P(A) = \F^+(A)/M\ge0$ and $\Q(A) = \F^-(A)/M\ge0$, for any $A\in\mathcal B_\X$.
    \item By the definition of $\P$ and $\Q$, $\F^+(A)=M\P(A) = \int_A M \d \P(x)$ and $\F^-(A) = M\Q(A) =\int_A M \d \Q(x)$ for any $A\in\mathcal B_\X$; hence
    $\frac{\d \F^+}{\d \P} = \frac{\d \F^-}{\d \Q} = M$. As such,
    \begin{align}
    \infty \jover{a}{>} & \int_\X \norm{K(\cdot,x)}{\H_K}^p\d\F^+(x) \jover{b}{=} M\int_\X \norm{K(\cdot,x)}{\H_K}^p\d\P(x) \text{ and }\\
    \infty \jover{a}{>} & \int_\X \norm{K(\cdot,x)}{\H_K}^p\d\F^-(x) \jover {b}{=} M\int_\X \norm{K(\cdot,x)}{\H_K}^p\d\Q(x),
    \end{align} 
    where (a) holds by $\F^+, \F^- \in \mPb^p(K;\X)$, and (b) is by the change of measure property of Radon-Nikodym derivatives and the homogeneity of the integral. 
\end{enumerate} 
Therefore, $\P,\Q\in\mP^p(K;\X)$ and are such that $\F = \F^+-\F^- = M(\P-\Q)$. Note that by $\F \neq 0$, $\P\neq \Q$.
\end{proof}

\begin{lemmaA}[A measure in {$[\mPb^p(\rho;\X)]^0$} has a decomposition in $\mP^p(\rho;\X)$]\label{lem:decomp_F_rho}
    Let $(\X,\tau_\X)$ a semimetric space, $p\in [1,\infty)$, and $\F\in[\mPb^p(\rho;\X)]^0\setminus\{0\}$. Then, there exist two distinct measures $\P,\Q\in\mP^p(\rho;\X)$ and $M\in (0,\infty)$ such that $\F = M(\P - \Q)$.
\end{lemmaA}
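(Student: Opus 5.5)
The argument mirrors that of Lemma~\ref{lem:decomp_F}, with the kernel moment $\norm{K(\cdot,x)}{\H_K}^p$ replaced by the semimetric moment $\rho^{p/2}(x,x_0)$. Fix $\F\in[\mPb^p(\rho;\X)]^0\setminus\{0\}$ and take its Hahn--Jordan decomposition $\F=\F^+-\F^-$, with $\F^\pm$ non-negative, finite and mutually singular.

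\textbf{Masses.} Since $\F(\X)=0$, we get $\F^+(\X)=\F^-(\X)\eqcolon M$. We also have $M<\infty$ because $\F\in\M_b(\X)$. Finally $M>0$, since otherwise $\F^+=\F^-=0$ and hence $\F=0$.

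\textbf{Moment condition.} By definition of $\mPb^p(\rho;\X)$, there is some $x_0\in\X$ with $\int_\X\rho^{p/2}(x,x_0)\d|\F|(x)<\infty$. The integrand is non-negative and $\F^\pm\le|\F|=\F^++\F^-$ setwise. Monotonicity of the integral then gives
\begin{align}
\int_\X\rho^{p/2}(x,x_0)\d\F^\pm(x)\le\int_\X\rho^{p/2}(x,x_0)\d|\F|(x)<\infty .
\end{align}
So $\F^\pm\in\mPb^p(\rho;\X)$, with the same witness point $x_0$.

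\textbf{Normalization.} Set $\P\coloneq\F^+/M$ and $\Q\coloneq\F^-/M$. These are non-negative and have total mass one, so they are probability measures. By homogeneity of the integral,
\begin{align}
\int_\X\rho^{p/2}(x,x_0)\d\P(x)=\frac1M\int_\X\rho^{p/2}(x,x_0)\d\F^+(x)<\infty ,
\end{align}
and likewise for $\Q$. Hence $\P,\Q\in\mP^p(\rho;\X)$. By construction $\F=M(\P-\Q)$, and $\P\ne\Q$ because $\F\ne0$ and $M>0$.

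No step is a real obstacle. The one point to watch is that the definition of $\mPb^p(\rho;\X)$ only requires the moment condition for \emph{some} $x_0$. The fix is to reuse the same $x_0$ that works for $|\F|$, so no triangle-type inequality for $\rho$ is needed; this matters because $\rho$ is only a semimetric.
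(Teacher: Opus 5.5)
Your proposal is correct and follows the paper's proof step for step. The steps are the Hahn--Jordan decomposition, equal masses $M\in(0,\infty)$ from $\F(\X)=0$ and $\F\neq 0$, bounding the moments of $\F^\pm$ by those of $|\F|$, and normalization by $M$. Your remark that the single witness point $x_0$ for $|\F|$ serves for both $\F^+$ and $\F^-$ is slightly tidier than the paper's ``some $x_0'$'' for $\F^-$, but it changes nothing of substance.
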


\begin{proof}[Proof (of Lemma~\ref{lem:decomp_F_rho})]
We will proceed similarly as in Lemma~\ref{lem:decomp_F} while guaranteeing that the decomposition remains in $\mP^p(\rho;\X)$.
Given $\F\in[\mPb^p(\rho;\X)]^0$, let its Hahn-Jordan decomposition be $\F=\F^+-\F^-$, where $\F^+$ and $\F^-$ are two non-negative measures satisfying $\F^+(\X) = \F^-(\X)\eqcolon M\in(0,\infty)$ [the equality $\F^+(\X) = \F^-(\X)$ is guaranteed by $\F(\X) = 0$, $M\in(0,\infty)$ comes from $\F\in \M_{b}(\X)\setminus\{0\}$ \citep[Theorem~10.7]{wheeden15measure}].  Further, $\F^+, \F^-\in \mPb^p(\rho;\X)$ as $\int_\X \rho^{p/2}(x,x_0)\d\F^+(x)\le \int_\X\rho^{p/2}(x,x_0)\d|\F| < \infty$, for some $x_0\in \X$; the claim for $\F^-$ follows similarly with some $x_0'\in \X$.

Then, for the two measures defined as $\P\coloneq\F^+/M$ and $\Q\coloneq \F^-/M$, one has that
\begin{enumerate}[label=(\roman*)]
    \item $\P(\X)= \F^+(\X)/M = M/M= 1$ and $\Q(\X) = \F^-(\X) / M = M/M = 1$.
    \item $\P(A) = \F^+(A)/M\ge0$ and $\Q(A) = \F^-(A)/M\ge0$, for any $A\in\mathcal B_\X$.
    \item By the definition of $\P$ and $\Q$, $\F^+(A)=M\P(A) = \int_A M \d \P(x)$ and $\F^-(A) = M\Q(A) =\int_A M \d \Q(x)$ for any $A\in\mathcal B_\X$; hence
    $\frac{\d \F^+}{\d \P} = \frac{\d \F^-}{\d \Q} = M$. As such, for some $x_0,x'_0\in\X$
    \begin{align}
    \infty \jover{a}{>} & \int_\X \rho^{p/2}(x,x_0)\d\F^+(x) \jover{b}{=} M\int_\X \rho^{p/2}(x,x_0)\d\P(x) \text{ and }\\
    \infty \jover{a}{>} & \int_\X \rho^{p/2}(x,x'_0)\d\F^-(x) \jover {b}{=} M\int_\X \rho^{p/2}(x,x'_0)\d\Q(x),
    \end{align} 
    where (a) holds by $\F^+, \F^- \in \mPb^p(\rho;\X)$, and (b) is by the change of measure property of Radon-Nikodym derivatives and the homogeneity of the integral. 
\end{enumerate} 
Therefore, $\P,\Q\in\mP^p(\rho;\X)$ and are such that $\F = \F^+-\F^- = M(\P-\Q)$. Note that by $\F \neq 0$, $\P\neq \Q$.
\end{proof}

The following result is a remark by \citet[Proposition 3.1]{lyons13distance}; we include the proof for completeness.
\begin{lemmaA}[Injectivity of the barycenter implies s.n.t.]
\label{lem:inj_barycenter} For $p\in[2,\infty)$, a semimetric space $(\X,\rho)$ of n.t.\ (as witnessed by the isometric embedding $\phi:(\X,\rho^{1/2})\to\H$, where $\H$ is a Hilbert space) has $[\mPb^p(\rho;\X)]^0$-s.n.t.\  iff $\beta_\phi$ is injective on $\mP^p(\rho;\X)$. 
\end{lemmaA}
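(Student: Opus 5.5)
The plan is to reduce everything to one identity linking the double integral of $\rho$ to the squared norm of the barycenter. We prove it for $\F\in[\mPb^p(\rho;\X)]^0$ with $p\ge 2$. Note first that $\mPb^p(\rho;\X)\subseteq\mPb^2(\rho;\X)\subseteq\mPb(\rho;\X)$ by footnote~\ref{footnote:P-b-q-rho:inclusion}. Hence $\beta_\phi(\F)$ exists, and so does the double integral in \eqref{eq:snt}.

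\textbf{Step 1: the identity.} Expand
\[
\rho(x,y)=\norm{\phi(x)-\phi(y)}{\H}^2=\norm{\phi(x)}{\H}^2+\norm{\phi(y)}{\H}^2-2\fip{\phi(x),\phi(y)}{\H}.
\]
The diagonal terms are not necessarily integrable, so I will center the embedding instead. Fix $x_0$ with $\int\rho(x,x_0)\,\d|\F|(x)<\infty$ and set $\tilde\phi=\phi-\phi(x_0)$. This is again an isometric embedding, and $\norm{\tilde\phi(x)}{\H}^2=\rho(x,x_0)$ is $|\F|$-integrable. Because $\F(\X)=0$, we have $\beta_{\tilde\phi}(\F)=\beta_\phi(\F)$.

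Apply Fubini (Theorem~\ref{th:Fubini}) to each of the three terms. The cross term is dominated by $\norm{\tilde\phi(x)}{\H}\norm{\tilde\phi(y)}{\H}$, which is $|\F|\otimes|\F|$-integrable since $\rho^{1/2}(\cdot,x_0)\in L_1(|\F|)$. Each diagonal term integrates to $\F(\X)$ times a finite number, which is $0$. Using the Bochner property $\int\fip{h,\tilde\phi(y)}{\H}\,\d\F(y)=\fip{h,\beta_{\tilde\phi}(\F)}{\H}$, we get
\[
\int_\X\int_\X\rho(x,y)\,\d\F(x)\,\d\F(y)=-2\norm{\beta_\phi(\F)}{\H}^2 .
\]
This is the same computation as in the proof of Theorem~\ref{th:snt_iff_ispd}. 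I expect justifying the integrability and the Fubini swaps, including the choice of base point, to be the main technical care. Everything after it is algebra.

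\textbf{Step 2: ($\Leftarrow$).} Assume $\beta_\phi$ is injective on $\mP^p(\rho;\X)$ and take $\F\in[\mPb^p(\rho;\X)]^0\setminus\{0\}$. Lemma~\ref{lem:decomp_F_rho} writes $\F=M(\P-\Q)$ with distinct $\P,\Q\in\mP^p(\rho;\X)$ and $M>0$. Linearity of the Bochner integral in the measure gives $\beta_\phi(\F)=M(\beta_\phi(\P)-\beta_\phi(\Q))\ne0$. By Step~1 the double integral is then strictly negative. Negative type holds by assumption, so $(\X,\rho)$ has $[\mPb^p(\rho;\X)]^0$-s.n.t.

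\textbf{Step 3: ($\Rightarrow$).} Assume s.n.t., and take $\P\ne\Q$ in $\mP^p(\rho;\X)$. Then $\F=\P-\Q$ is a nonzero element of $[\mPb^p(\rho;\X)]^0$. Membership needs one base point $x_0$ that works for both $\P$ and $\Q$. To get it, use $\rho^{1/2}(x,x_0)\le\rho^{1/2}(x,x_1)+\rho^{1/2}(x_1,x_0)$, which holds because $\rho^{1/2}$ is a metric under n.t. Combined with $(a+b)^{p}\le 2^{p-1}(a^p+b^p)$, this lets us switch base points while keeping $p$-integrability. S.n.t.\ and Step~1 then give $\norm{\beta_\phi(\P)-\beta_\phi(\Q)}{\H}^2>0$, so $\beta_\phi$ is injective.
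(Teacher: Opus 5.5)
Your proof is correct and follows the same route as the paper: the identity $\int_\X\int_\X\rho(x,y)\,\d\F(x)\,\d\F(y)=-2\norm{\beta_\phi(\F)}{\H}^2$ for $\F$ of zero total mass, combined with the decomposition $\F=M(\P-\Q)$ from Lemma~\ref{lem:decomp_F_rho}. Your extra care makes explicit what the paper leaves implicit, namely centering $\phi$ and switching base points so that $\P-\Q\in[\mPb^p(\rho;\X)]^0$. The centering is not actually needed, though, since $\norm{\phi(x)}{\H}^2\le 2\rho(x,x_0)+2\norm{\phi(x_0)}{\H}^2$ is already $|\F|$-integrable when $p\ge 2$.
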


\begin{proof}[Proof (of Lemma~\ref{lem:inj_barycenter})]
Indeed, note that for any $\F\in[\mPb^p(\rho;\X)]^0\setminus \{0\}$,
\begin{align}
\int_\X\int_\X\rho(x,y)\d\F(x)\d\F(y)  &\jover a= \int_\X\int_\X \norm{\phi(x)-\phi(y)}{\H}^2\d\F(x)\d\F(y) \\&\jover b= \int_\X\int_\X\norm{\phi(x)}{\H}^2 + \norm{\phi(y)}{\H}^2 - 2\fip{\phi(x),\phi(y)}{\H}\d\F(x)\d\F(y)
\\ &\jover c= -2\int_\X\int_\X\fip{\phi(x),\phi(y)}{\H}\d\F(x)\d\F(y)
\jover d= -2\fip{\beta_\phi(\F),\beta_\phi(\F)}{\H} \jover{e}{=} -2\norm{\beta_\phi(\F)}{\H}^2\\
&\jover{f}{=} -2\norm{\beta_\phi\big(M(\P-\Q)\big)}{\H} \jover{g}{=} -2M\norm{\beta_\phi(\P)-\beta_\phi(\Q)}{\H},\label{eq:rho_snt_iff_inj}
\end{align}
where (a) comes from the definition of isometric embedding, (b) follows from the fact that in a Hilbert space the norm is induced by its inner product and the linearity of the inner product, (c) is by the linearity of integration and the fact that $\F(\X)=0$ (as $\F \in [\mPb^p(\rho;\X)]^0$), and in (d) the inner product was swapped with the integral [\citealt[(A.32)]{steinwart08support}]. (e) follows from the fact that in a Hilbert space the norm is induced by its inner product. Lastly, %
Lemma~\ref{lem:decomp_F_rho} for $\F\ne 0$ gives (f) with distinct $\P,\Q\in\mP^p(\rho;\X)$, while the linearity of Bochner's integral (both in the integrand and the measure) and the absolute homogeneity of the norm combined with $M>0$ yield (g). 
Hence, as $M\ne 0$, $(\X,\rho)$ has $[\mPb^p(\rho;\X)]^0$-s.n.t.\ iff $\beta_\phi$ is injective on $\mP^p(\rho;\X)$. 
\end{proof}

\begin{lemmaA}[$\Abs_{p}(\P)$ and $L_p(\X,\P)$ are isometrically isomorphic]\label{lem:lp_isom_abs}
    Let  $(\X,\tau_\X)$ be a topological space, %
    $K:\X^2\to\R$ a kernel, $\P\in\mP(K;\X)$, and $p\in(1,\infty]$. For $\F \in \Abs_{p}(\P)$, define $\norm{\F}{\P,p}=\norm{\frac{\d\F}{\d\P}}{L_p\coloneq L_p(\X,\P)}$.  Then, $(\Abs_{p}(\P),\norm{\cdot}{\P,p})$ is a normed space. Moreover, $\imath_\P : L_p(\X,\P) \to \Abs_{p}(\P)$ defined by $[f]_\sim \mapsto  \F_{\P,f}$ with $\frac{\d \F_{\P,f}}{\d \P}= f$ is an isometric isomorphism.
    \end{lemmaA}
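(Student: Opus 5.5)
The plan is to transport the Banach space structure of $L_p(\X,\P)$ to $\Abs_p(\P)$ through the Radon--Nikodym map. First I will check that $\imath_\P$ is well defined. For $f\in\mL_p(\X,\P)$, finiteness of $\P$ and $p>1$ give $L_p(\X,\P)\subseteq L_1(\X,\P)$ by H\"older's inequality (or directly for $p=\infty$). Hence $\F_{\P,f}(A)\coloneq\int_A f\,\d\P$ defines a finite signed measure. It is countably additive by dominated convergence, and it satisfies $\F_{\P,f}\ll\P$ with $\frac{\d\F_{\P,f}}{\d\P}=f$ $\P$-a.e. Replacing $f$ by any $f'\sim f$ leaves every integral $\int_A f\,\d\P$ unchanged, so $\F_{\P,f}$ depends only on $[f]_\sim$. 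Moreover $\F_{\P,f}\in\Abs_p(\P)$ by construction.

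Next I will show that $\imath_\P$ is a bijection. For injectivity, suppose $\F_{\P,f}=\F_{\P,g}$. Then $\int_A(f-g)\,\d\P=0$ for every $A\in\mB_\X$. Taking $A=\{f>g\}$ and $A=\{f<g\}$ gives $f=g$ $\P$-a.e., that is, $[f]_\sim=[g]_\sim$; this is exactly the $\P$-a.e.\ uniqueness of the Radon--Nikodym derivative. For surjectivity, let $\F\in\Abs_p(\P)$. By definition $\F\ll\P$ and $h\coloneq\frac{\d\F}{\d\P}\in L_p(\X,\P)$, and then $\imath_\P([h]_\sim)=\F$. The inverse map is therefore $\F\mapsto[\frac{\d\F}{\d\P}]_\sim$, and it is well defined for the same uniqueness reason. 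This also shows that $\norm{\F}{\P,p}$ does not depend on the chosen version of the derivative.

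Linearity comes next. From $\int_A(\alpha f+\beta g)\,\d\P=\alpha\F_{\P,f}(A)+\beta\F_{\P,g}(A)$ we get $\imath_\P(\alpha[f]_\sim+\beta[g]_\sim)=\alpha\imath_\P([f]_\sim)+\beta\imath_\P([g]_\sim)$. As a consequence $\Abs_p(\P)$ is closed under addition and scalar multiplication, so it is a linear subspace of $\M_b(\X)$. By construction $\norm{\imath_\P([f]_\sim)}{\P,p}=\norm{f}{L_p}$. So $\norm{\cdot}{\P,p}$ is the pullback of the $L_p$ norm along the linear bijection $\imath_\P^{-1}$, and it inherits positivity, homogeneity and the triangle inequality. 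Definiteness holds because $\norm{\F}{\P,p}=0$ forces $\frac{\d\F}{\d\P}=0$ $\P$-a.e., hence $\F=0$. Thus $(\Abs_p(\P),\norm{\cdot}{\P,p})$ is a normed space and $\imath_\P$ is an isometric isomorphism.

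There is no real obstacle here. The only delicate step is the bookkeeping of equivalence classes versus representatives, namely the well-definedness of $\imath_\P$ and of the norm. Both reduce to the $\P$-a.e.\ uniqueness of Radon--Nikodym derivatives \citep[Theorem 5.5.4]{dudley04real}, extended to signed measures \citep[Corollary 5.6.2]{dudley04real}. The assumptions on $K$ and $\P\in\mP(K;\X)$ play no role beyond $\P$ being a probability measure.
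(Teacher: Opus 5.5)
Your proposal is correct and follows essentially the same route as the paper. Both show that $\imath_\P$ is a linear, isometric surjection, using the linearity and $\P$-a.e.\ uniqueness of Radon--Nikodym derivatives, and both obtain the norm on $\Abs_p(\P)$ from the $L_p$ norm. You are also more careful than the paper on two points: you check that $A\mapsto\int_A f\,\d\P$ is a well-defined finite signed measure via $L_p(\X,\P)\subseteq L_1(\X,\P)$, which the paper leaves implicit, and you obtain the norm axioms by pulling back the $L_p$ norm through the bijection rather than verifying them one by one.
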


\begin{proof}[Proof (of Lemma~\ref{lem:lp_isom_abs})] Let $a\in\R$ and $\F,\F_1,\F_2\in \Abs_{p}(\P)$. We first note that $\Abs_{p}(\P)$ is a vector space over $\R$. Indeed, $\M_b(\X)$ is so, hence it suffices to show that $\Abs_{p}(\P)$ is a subspace of $\M_b(\X)$, in other words that (i) $\Abs_{p}(\P)\neq \emptyset$ [for instance, $0 \in \Abs_{p}(\P)$], (ii) $a\F \in \Abs_{p}(\P)$, and (iii) $\F_1+\F_2 \in \Abs_{p}(\P)$; these properties follow from the definition of $\Abs_{p}(\P)$.

Next, we show that $\norm{\cdot}{\P,p}$ is a norm.
\begin{enumerate}[label=(\roman*)]
    \item $\norm{a\F}{\P,p} = \norm{\dfrac{\d (a \F)}{\d\P}}{L_p}  = |a| \norm{\dfrac{\d \F}{\d\P}}{L_p}=|a|\norm{\F}{\P,p}$, by using the linearity of the Radon-Nikodym derivative and the absolute homogeneity of the $L_p$-norm.
    \item $\norm{\F_1+\F_2}{\P,p} = \norm{\dfrac{\d(\F_1+\F_2)}{\d\P}}{L_p} = \norm{\dfrac{\d\F_1}{\d\P}+\dfrac{\d \F_2}{\d\P}}{L_p} \le \norm{\dfrac{\d\F_1}{\d\P}}{L_p}+\norm{\dfrac{\d \F_2}{\d\P}}{L_p} = \norm{\F_1}{\P,p} +\norm{\F_2}{\P,p}$, by using the linearity of the Radon-Nikodym derivative, three times the definition of $\norm{\cdot}{\P,p}$, and the triangle inequality for the  $\norm{\cdot}{L_p}$ norm.
    \item $\norm{\F}{\P,p}\ge 0$ for any $\F \in \Abs_{p}(\P)$ as the $L_p$ norm is so. $\norm{\F}{\P,p} = 0$ iff $\F$ = 0 follows from the definition of $\norm{\cdot}{\P,p}$ and the fact that $\norm{\cdot}{L_p}$ is a norm. Indeed, 
    $\F=0$ implies that $\frac{\d \F}{\d \P}= 0$, hence $\norm{\F}{\P,p}=\norm{\frac{\d\F}{\d\P}}{L_p}=0$. For $\F\ne 0$, $\dfrac{\d\F}{\d\P} \neq 0$ with positive $\P$-probability; therefore $\norm{\F}{\P,p} = \norm{\dfrac{\d\F}{\d\P}}{L_p} > 0$.
\end{enumerate}

It remains to prove that $\imath_\P$ is an isometric isomorphism, for which it suffices to establish that $\imath_\P$ is a linear isometric surjection (see \citealt[page~46--47]{robinson20functionalanalysis}); this can be seen as follows. 
\begin{enumerate}[label=(\roman*)]
        \item Let $a,b \in \R$ and $[f]_\sim,[g]_\sim \in L_p$. Linearity holds as $\imath_\P(a[f]_\sim + b[g]_\sim) = \imath_\P([af+bg]_\sim) = \F_{\P, af+bg} = a\F_{\P,f} + b\F_{\P,g} = a\imath_\P([f]_\sim)+b\imath_\P([g]_\sim])$, where we used that
        \begin{align}
        \frac{\d \F_{\P,af+bg}}{\d\P} &= af+bg = a\frac{\d \F_{\P,f}}{\d\P}+b\frac{\d \F_{\P,g}}{\d\P}.
        \end{align}
        \item $\imath_\P$ is surjective by definition.
        \item Further, $\imath_\P$ is an isometry as $
          \norm{\imath_\P([f]_\sim)}{\P,p} = \norm{f}{L_p}$ holds by the definition of $\imath_\P$ and that of the $\norm{\cdot}{\P,p}$-norm.
    \end{enumerate}
This completes the proof.
\end{proof}

\begin{corollaryA}[$\Abs_{p'}(\P)$ is a group]\label{lem:abs_holds_ii-iii}
Let $p'\in (1,\infty]$, $(\X,\tau_\X)$ a topological space, 
$K:\X^2\to\R$ a kernel, and $\P\in\mP^p(K;\X)$. Then, $(\Abs_{p'}(\P),+)$ is a group. 
\end{corollaryA}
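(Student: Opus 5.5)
The plan is to deduce the claim directly from Lemma~\ref{lem:lp_isom_abs}, applied with the exponent $p'\in(1,\infty]$ in place of $p$. Its hypotheses hold: $\P\in\mP^p(K;\X)\subseteq\mP(K;\X)$ by the inclusion $\mPb^p(K;\X)\subseteq\mPb(K;\X)$ recalled in Section~\ref{sec:notations}. The lemma then shows that $\Abs_{p'}(\P)$ is a vector space over $\R$; more precisely, it is a linear subspace of $\M_b(\X)$ that is isometrically isomorphic to $L_{p'}(\X,\P)$. Every vector space is an abelian group under its addition, so $(\Abs_{p'}(\P),+)$ is a group.

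To make the group axioms explicit, I will verify them concretely.

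\begin{enumerate}[label=(\roman*)]
\item \emph{Identity.} The zero measure lies in $\Abs_{p'}(\P)$, since $0\ll\P$ and $\frac{\d 0}{\d\P}=0\in L_{p'}(\X,\P)$.
\item \emph{Closure under addition.} Let $\F_1,\F_2\in\Abs_{p'}(\P)$. Then $\F_1+\F_2\ll\P$ and, $\P$-a.e., $\frac{\d(\F_1+\F_2)}{\d\P}=\frac{\d\F_1}{\d\P}+\frac{\d\F_2}{\d\P}$. This sum lies in $L_{p'}(\X,\P)$ by Minkowski's inequality (the triangle inequality of $\norm{\cdot}{L_{p'}}$).
\item \emph{Inverses.} For $\F\in\Abs_{p'}(\P)$ we have $-\F\ll\P$ and $\frac{\d(-\F)}{\d\P}=-\frac{\d\F}{\d\P}\in L_{p'}(\X,\P)$.
\item \emph{Associativity and commutativity.} These are inherited from $\M_b(\X)$.
\end{enumerate}

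These points are exactly conditions (i)--(iii) of Theorem~\ref{th:ispd0_iff_char}, which is how the corollary is used later.

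I expect no step to be a genuine obstacle. The only point needing care is the $\P$-a.e.\ linearity of the Radon--Nikodym derivative. This follows from the uniqueness of the derivative $\P$-a.e.\ together with linearity of the integral: for every $A\in\mB_\X$,
\[
(\F_1+\F_2)(A)=\int_A\Bigl(\tfrac{\d\F_1}{\d\P}+\tfrac{\d\F_2}{\d\P}\Bigr)\d\P .
\]
The same identity applied to a scalar multiple gives the negation case. The case $p'=\infty$ needs nothing extra, because $L_\infty(\X,\P)$ is also a normed vector space.
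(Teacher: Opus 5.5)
Your proposal is correct and matches the paper's proof, which is the one-line deduction from Lemma~\ref{lem:lp_isom_abs} that $\Abs_{p'}(\P)$ is a vector space and hence an additive group. Your explicit checks of the identity, closure under addition and inverses spell out the subspace argument already contained in that lemma.
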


\begin{proof}[Proof (of Corollary~\ref{lem:abs_holds_ii-iii})]
By Lemma~\ref{lem:lp_isom_abs}, $\Abs_{p'}(\P)$ is a vector space; hence $(\Abs_{p'}(\P),+)$ is a group.
\end{proof}

\begin{lemmaA}[A property of the space $\Abs_{p'}(\P)$ for measures in $\mP^p(K;\X)$]\label{lem:abs_p_subset_mpb}
Let $(\X,\tau_\X)$ be a topological space,
$K:\X^2\to\R$ a kernel, and $p\in [1,\infty)$. Then, for any $\P\in\mP^p(K;\X)$, one has  that $\Abs_{p'}(\P) \subseteq \mPb(K;\X)$, implying that $\A^p(K;\X)\subseteq \mPb(K;\X)$.
\end{lemmaA}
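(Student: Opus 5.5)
The plan is a direct Hölder argument. Fix $p\in[1,\infty)$ and $\P\in\mP^p(K;\X)$, and take an arbitrary $\F\in\Abs_{p'}(\P)$. Set $g\coloneq\frac{\d\F}{\d\P}\in L_{p'}(\X,\P)$. Since $\F\in\M_b(\X)$ already, it only remains to check that $\int_\X\nK\,\d|\F|(x)<\infty$.

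First I would identify the total variation: $|\F|\ll\P$ with $\frac{\d|\F|}{\d\P}=|g|$ $\P$-a.e. This is standard, since the Hahn decomposition for $\F$ is given by the sets $\{g\ge0\}$ and $\{g<0\}$. The change-of-measure property then gives
\begin{align}
\int_\X \nK\,\d|\F|(x)=\int_\X \nK\,|g(x)|\,\d\P(x).
\end{align}

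Next I would apply Hölder's inequality with exponents $p$ and $p'$ to get
\begin{align}
\int_\X \nK\,|g(x)|\,\d\P(x)\le \left[\int_\X \nK^p\,\d\P(x)\right]^{1/p}\norm{g}{L_{p'}(\X,\P)}.
\end{align}
The first factor is finite because $\P\in\mP^p(K;\X)$, and the second is finite by the definition of $\Abs_{p'}(\P)$. When $p=1$ we have $p'=\infty$, and the bound reads $\norm{g}{L_\infty}\int_\X\nK\,\d\P(x)$; this is covered by the same inequality with the convention on $L_\infty$. Hence $\F\in\mPb(K;\X)$, which gives $\Abs_{p'}(\P)\subseteq\mPb(K;\X)$.

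The inclusion $\A^p(K;\X)\subseteq\mPb(K;\X)$ then follows by taking the union over $\P\in\mP^p(K;\X)$. The only step needing care is the identity $\frac{\d|\F|}{\d\P}=|g|$. If a reference is preferred, I would cite the standard fact that for $\F=g\,\P$ the total variation is $|g|\,\P$. Everything else is routine.
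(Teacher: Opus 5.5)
Your proposal is correct and follows the paper's proof essentially step for step. The paper likewise uses $\frac{\d|\F|}{\d\P}=\left|\frac{\d\F}{\d\P}\right|$ (citing Rudin, Theorem 6.13), applies Hölder's inequality with exponents $p$ and $p'$, and takes the union over $\P\in\mP^p(K;\X)$ to obtain $\A^p(K;\X)\subseteq\mPb(K;\X)$.
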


\begin{proof}[Proof (of Lemma~\ref{lem:abs_p_subset_mpb})]
    Let $\F\in\Abs_{p'}(\P)$, in other words $\F\in\M_b(\X)$, $\F\ll\P$ and $\frac{\d \F}{\d \P} \eqcolon f\in L_{p'}(\X,\P)$. This implies that
    \begin{align}
    \int_\X \norm{K(\cdot,x)}{\H_K} \d |\F|(x) 
    & \jover a= \int_\X \norm{K(\cdot,x)}{\H_K} |f(x)|\d \P(x)\\
    & \jover b\leq \left(\int_\X \norm{K(\cdot,x)}{\H_K}^p\!\!\d \P(x)\right)^{1/p}\left(\int_\X  |f(x)|^{p'}\d \P(x)\right)^{1/p'} \jover c< \infty, \label{eq:F-in-Pb}
    \end{align}
    where (a) holds by the fact that $\frac{\d |\F|}{\d \P} = \left|\frac{\d \F}{\d \P}\right|$ as $\F$ is a finite signed measure \citep[Theorem 6.13]{rudin87real},
    (b) follows from the Hölder's inequality, and (c) is implied by $\P\in\mP^p(K;\X)$ and $f\in L_{p'}(\X,\P)$. \eqref{eq:F-in-Pb} means that $|\F|\in\mPb(K;\X)$, which implies that $\F\in\mPb(K;\X)$; thus $\Abs_{p'}(\P)\subseteq \mPb(K;\X)$. The implication in the statement holds by the definition of $\A^p(K;\X)$.
\end{proof}

\begin{lemmaA}[$\mPb^p(K;\X) \subseteq \A^p(K;\X)$] \label{lem:Pbp-vs-Ap}
Let $p\in [1,\infty)$, $(\X,\tau_\X)$ a topological space,  
$K:\X^2\to\R$ a kernel. 
Then, $\mPb^p(K;\X) \subseteq \A^p(K;\X)$. 
\end{lemmaA}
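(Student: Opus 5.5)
Given $\F\in\mPb^p(K;\X)$, the plan is to produce a single probability measure $\P\in\mP^p(K;\X)$ such that $\F\in\Abs_{p'}(\P)$. The obvious candidate is the normalized total variation. If $\F=0$, the claim is immediate: $0\in\Abs_{p'}(\P)$ for any $\P\in\mP^p(K;\X)$, and this set is nonempty, since for instance any Dirac measure $\delta_{x}$ belongs to it because $K(x,x)<\infty$. So assume $\F\ne 0$ and set $\P\coloneq|\F|/|\F|(\X)$. This is well defined because $0<|\F|(\X)<\infty$, as $\F\in\M_b(\X)\setminus\{0\}$.

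The key steps, in order, are the following.

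First, $\P\in\mP^p(K;\X)$. This holds because $\int_\X\norm{K(\cdot,x)}{\H_K}^p\d\P(x)=|\F|(\X)^{-1}\int_\X\norm{K(\cdot,x)}{\H_K}^p\d|\F|(x)<\infty$, using the definition of $\mPb^p(K;\X)$.

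Second, $\F\ll\P$. If $\P(A)=0$, then $|\F|(A)=0$, and hence $|\F(A)|\le|\F|(A)=0$.

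Third, we identify the Radon--Nikodym derivative. We have
\begin{align}
\frac{\d\F}{\d\P}=|\F|(\X)\,\frac{\d\F}{\d|\F|},
\end{align}
by the chain rule \citep[Proposition 3.9.b]{folland99real}, since $\frac{\d|\F|}{\d\P}=|\F|(\X)$. Moreover, $\left|\frac{\d\F}{\d|\F|}\right|=1$ holds $|\F|$-a.e. \citep[Exercise 13.7]{bass11real}, so the derivative is bounded in absolute value by the constant $|\F|(\X)$, $\P$-a.e.

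From the third step, $\frac{\d\F}{\d\P}\in L_\infty(\X,\P)\subseteq L_{p'}(\X,\P)$ for every $p'\in[1,\infty]$, because $\P$ is a probability measure. This gives $\F\in\Abs_{p'}(\P)\subseteq\A^p(K;\X)$. It covers $p=1$, where $p'=\infty$, as well as $p>1$.

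There is no genuine obstacle here. The only point requiring care is the third step: the bound $\left|\frac{\d\F}{\d|\F|}\right|=1$ a.e. must be justified via the Hahn decomposition, which gives $\frac{\d\F}{\d|\F|}=\mathbf{1}_{P}-\mathbf{1}_{N}$ for a Hahn partition $(P,N)$. We must also check that the a.e. statements transfer between $|\F|$ and $\P$, which is trivial because the two measures are mutually absolutely continuous.
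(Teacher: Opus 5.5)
Your proposal is correct and matches the paper's proof step by step: both normalize the total variation to $\P_\F=|\F|/|\F|(\X)$, use the chain rule with $\left|\frac{\d\F}{\d|\F|}\right|=1$ to bound the Radon--Nikodym derivative by $|\F|(\X)$, and conclude via $L_\infty(\X,\P_\F)\subseteq L_{p'}(\X,\P_\F)$. Your observation that $\mP^p(K;\X)\neq\emptyset$ (via Dirac measures), needed for the case $\F=0$, is a small detail the paper leaves implicit.
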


\begin{proof}[Proof (of Lemma~\ref{lem:Pbp-vs-Ap})]
Let $\F \in \mPb^p(K;\X)$. If $\F = 0$, then any $\P\in \mP^p(K;\X)$ is such that $0\ll \P$ with $\frac{\d 0}{\d \P} = 0\in L_{p'}(\X,\P)$, implying that $0\in \A^p(K;\X)$; hence, in the sequel, we assume that $\F\neq0$. Then, with $\P_\F \coloneq\frac{|\F|}{|\F|(\X)}\in \mP^p(K;\X)$ [the inclusion holds by the definition of $\mPb^p(K;\X)$],\footnote{Note that $|\F|(\X)>0$ as $\F\neq0$.} for $\P_\F$-a.e.\ $x\in \X$,  we have that 
\begin{align}
 \left|\frac{\d \F}{\d\P_\F}(x)\right| \jover a= \left|\frac{\d \F}{\d|\F|}(x)\right|\left|\frac{\d|\F|}{\d \P_\F}(x)\right| \jover b= |\F|(\X) <\infty,
\end{align} 
 where (a) comes from the chain rule of the Radon-Nikodym derivative (using that $\F \ll |\F| \ll \P_\F$; \citealt[Proposition 3.9.b]{folland99real}), and (b) is implied by the facts that $\left|\frac{\d \F}{\d |\F|}(x)\right|=1$ for $\F$-a.e.\ $x\in\X$ \citep[Exercise~13.7]{bass11real} and $\frac{\d|\F|}{\d \P_\F}=|\F|(\X)$; this means that $\frac{\d \F}{\d \P_\F}$ is $\P_\F$-essentially bounded: $\frac{\d \F}{\d \P_\F} \in L_\infty(\X,\P_\F)\subseteq L_{p'}(\X,\P_\F)$, proving that $\F\in\Abs_{p'}(\P_\F)\subseteq \A^p(K;\X)$. 
\end{proof}

\begin{lemmaA}[The exponential kernel is $\mPb(K;\R^d)$-i.s.p.d.]\label{lem:exp_is_ispd}
    The exponential kernel [$K(\b x,\b y) = e^{a\fip{\b x,\b y}{}}$ with $\b x, \b y \in \R^d$ and $a>0$] is $\mPb(K;\R^d)$-i.s.p.d.
\end{lemmaA}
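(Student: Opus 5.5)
We need to show that for every nonzero $\F\in\mPb(K;\R^d)$, i.e.\ $\int e^{a\norm{\b x}{2}^2/2}\d|\F|(\b x)<\infty$, the quantity $\int\int e^{a\fip{\b x,\b y}{}}\d\F(\b x)\d\F(\b y)$ is strictly positive.

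\textbf{Step 1: series expansion.} Expand $e^{a\fip{\b x,\b y}{}}=\sum_{n\ge0}\frac{a^n}{n!}\fip{\b x,\b y}{}^n$. Then write the power as a sum over multi-indices,
\begin{align}
\fip{\b x,\b y}{}^n=\sum_{|\alpha|=n}\binom{n}{\alpha}\b x^\alpha\b y^\alpha .
\end{align}
This gives
\begin{align}
K(\b x,\b y)=\sum_{\alpha\in\N^d}c_\alpha\,\b x^\alpha\b y^\alpha,\qquad c_\alpha=\frac{a^{|\alpha|}}{\alpha!}>0 .
\end{align}

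\textbf{Step 2: exchange sum and integrals.} We want to write
\begin{align}
\norm{\mu_K(\F)}{\H_K}^2=\sum_\alpha c_\alpha m_\alpha^2,\qquad m_\alpha=\int\b x^\alpha\d\F(\b x).
\end{align}
The exchange is justified by Fubini--Tonelli (Theorem~\ref{th:Fubini}), since the absolute series is bounded by
\begin{align}
\sum_\alpha c_\alpha|\b x^\alpha||\b y^\alpha|\le e^{a\fip{|\b x|,|\b y|}{}}\le e^{a\norm{\b x}{2}\norm{\b y}{2}}\le e^{a\norm{\b x}{2}^2/2}e^{a\norm{\b y}{2}^2/2}.
\end{align}
This product is $|\F|\otimes|\F|$-integrable exactly because $\F\in\mPb(K;\R^d)$. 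The same bound also shows that every moment $m_\alpha$ exists. Consequently $\norm{\mu_K(\F)}{\H_K}^2=\sum_\alpha c_\alpha m_\alpha^2\ge0$, with equality iff every moment $m_\alpha$ of $\F$ vanishes.

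\textbf{Step 3 (main obstacle): vanishing moments force $\F=0$.} Suppose all $m_\alpha=0$. Consider the Fourier transform $\hat\F(\b t)=\int e^{i\fip{\b t,\b x}{}}\d\F(\b x)$. Because $\int e^{a\norm{\b x}{2}^2/2}\d|\F|<\infty$, we have $\int e^{c\norm{\b x}{2}}\d|\F|<\infty$ for every $c>0$. Hence $\hat\F$ extends to an entire function on $\mathbb C^d$, and its power series at $0$ is
\begin{align}
\sum_\alpha \frac{i^{|\alpha|}m_\alpha}{\alpha!}\b t^\alpha ,
\end{align}
with termwise differentiation under the integral justified by dominated convergence using that exponential moment. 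This series is identically $0$, so $\hat\F\equiv0$. By uniqueness of Fourier transforms of finite signed measures, $\F=0$, a contradiction.

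This is the delicate step: it is exactly a determinacy argument for the moment problem. The plan is to rely on the strong Gaussian-type moment condition to get analyticity, rather than attempting a Carleman-type criterion. Combining Steps 2 and 3, $\norm{\mu_K(\F)}{\H_K}^2>0$ for every $\F\ne0$, which is the $\mPb(K;\R^d)$-i.s.p.d.\ property.
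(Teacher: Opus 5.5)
Your proof is correct, and it takes a genuinely different route from the paper.

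\textbf{The paper's route.} The paper never expands the kernel. It uses the polarization identity to write $K(\b x,\b y)=f(\b x)f(\b y)e^{-\frac a2\norm{\b x-\b y}{2}^2}$ with $f(\b x)=e^{\frac a2\norm{\b x}{2}^2}$. It then notes that $f\in L_1(\R^d,|\F|)$ is exactly the condition $\F\in\mPb(K;\R^d)$. Absorbing $f$ into the measure gives a nonzero finite signed measure $\F'=f\cdot\F$. The claim then follows from the known $\M_b(\R^d)$-i.s.p.d.\ property of the Gaussian kernel.

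\textbf{Your route.} You derive the explicit identity $\norm{\mu_K(\F)}{\H_K}^2=\sum_\alpha\frac{a^{|\alpha|}}{\alpha!}m_\alpha^2$. This reduces the claim to moment determinacy, which you settle by uniqueness of Fourier transforms.

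\textbf{Comparison.} Both proofs ultimately rest on the injectivity of the Fourier--Stieltjes transform on finite signed measures; in the paper this is hidden inside the cited Gaussian result. Your version is self-contained and exhibits the monomial structure of $\H_K$. It also adapts to other power-series kernels $\sum_\alpha c_\alpha\b x^\alpha\b y^\alpha$ with all $c_\alpha>0$, provided the relevant integrability condition still yields an exponential moment, so that determinacy holds. The paper's tilting trick is shorter. It transfers to any kernel $f(\b x)f(\b y)k(\b x,\b y)$ with measurable $f>0$ and $k$ a bounded, translation-invariant, $\M_b(\R^d)$-i.s.p.d.\ kernel.

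\textbf{A small simplification of your Step 3.} The analytic continuation is unnecessary. For each fixed $\b t\in\R^d$,
$\sum_\alpha|\b t^\alpha||\b x^\alpha|/\alpha!\le e^{\norm{\b t}{2}\norm{\b x}{2}}\le e^{\norm{\b t}{2}^2/(2a)}e^{\frac a2\norm{\b x}{2}^2}$,
which is $|\F|$-integrable. The same domination as in your Step 2 then gives $\hat\F(\b t)=\sum_\alpha i^{|\alpha|}m_\alpha\b t^\alpha/\alpha!=0$ directly.
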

\begin{proof}[Proof (of Lemma~\ref{lem:exp_is_ispd})]
    Consider an arbitrary $\F \in \mPb(K;\R^d)\setminus \{0\}$, and observe that
    \begin{align}
        \hspace*{-0.3cm}\infty \jover{Def.}{>}  \int_{\R^d} \norm{K(\b x, \cdot)}{\H_K} \d |\F|(\b x) \jover a=  \int_{\R^d} \sqrt{K(\b x,\b x)} \d |\F|(\b x) \jover b =  \int_{\R^d} e^{\frac{a\fip{\b x,\b x}{}}{2}} \d |\F|(\b x) \jover c =  \int_{\R^d} e^{\frac{a\norm{\b x}{2}^2}{2}} \d |\F|(\b x), \label{eq:exp_finite_int}
    \end{align}
    where (a) follows from the fact that in a Hilbert space the norm is induced by its inner product and the reproducing property, (b) holds by the definition of the exponential kernel and the fact that $\left(e^b\right)^c = e^{bc}$ for $b,c\in \R$, and (c) is by the fact that $\fip{\b x,\b x}{} = \norm{\b x}{2}^2$. 
    
    We introduce three auxiliary signed measures which will be useful in the forthcoming. Indeed, with $f(\b x)\coloneq e^{\frac{a\norm{\b x}{2}^2}{2}}$ ($\b x\in \R^d$), define the signed measures
    \begin{align}
    \F_{1}(A) &\coloneq \int_{A} f(\b x) \d \F^+(\b x), &
    \F_2(A) &\coloneq \int_{A} f(\b x) \d\F^-(\b x), \, (A\in\mB_{\R^d})& \F' &\coloneq \F_1 - \F_2; \label{eq:def:F_1-F_2-F'}
    \end{align}
    these measures are guaranteed to exists as $f\in L_1(\R^d,|\F|)$ by \eqref{eq:exp_finite_int}, and \eqref{eq:def:F_1-F_2-F'} implies that 
    \begin{align}
    \frac{\d \F_1}{\d \F^+}&=f,&
    \frac{\d \F_2}{\d \F^-}&=f. \label{eq:dF1-or-F2-per-dFpm}
    \end{align}
    
    Observe that for any $g\in L_\infty(\R^d,|\F|)$,\begin{align}
        \norm{f g}{L_1(\R^d,|\F|)} \jover a \leq \norm{f}{L_1(\R^d,|\F|)}\norm{g}{L_\infty(\R^d,|\F|)} \jover b < \infty, \label{eq:well_def_change_measure}
    \end{align}
    where (a) comes from Hölder's inequality and (b) holds since $f\in L_1(\R^d,|\F|)$ [by \eqref{eq:exp_finite_int}] and $g\in L_\infty(\R^d,|\F|)$. Then, we have the following change of measure formula:
    \begin{align}
        \infty \overset{\eqref{eq:well_def_change_measure}}{>}\int_{\R^d} f(\b x) g(\b x) \d \F(\b x) 
        &\jover a = \int_{\R^d} f(\b x) g(\b x) \d \F^+(\b x) - \int_{\R^d}  f(\b x) g(\b x) \d \F^-(\b x) 
        \\
        &\jover b =  \int_{\R^d} g(\b x) \d \F_1(\b x) - \int_{\R^d}  g(\b x) \d \F_2(\b x)
        \\&\jover c = \int_{\R^d}g(\b x) \d[\F_1-\F_2](\b x) \jover {d}{=} \int_{\R^d}g(\b x) \d\F'(\b x), \label{eq:exp_change_meas}
    \end{align}
    where (a) comes from the definition of integration w.r.t\ signed measures, (b) is by the change of measure property of Radon-Nikodym derivatives \citep[Proposition 3.9]{folland99real} [using \eqref{eq:dF1-or-F2-per-dFpm}; valid as $g\in L_\infty(\R^d,|\F|)\subseteq L_1(\R^d,|\F|)$], (c) holds by the linearity of integration w.r.t.\ the measure, and (d) follows from the definition of $\F'$.
    
    Next we show that the double integral of $K$ w.r.t.\ $\F$ is positive; as $\F\in\mPb(K;\R^d)\setminus\{0\}$ is arbitrary, this implies the $\mPb(K;\R^d)$-i.s.p.d.\ property we seek to prove. Indeed, 
    \begin{align}
        \MoveEqLeft\int_{\R^d}\int_{\R^d} e^{a\fip{\b x,\b y}{}} \d \F(\b x)\d\F(\b y) \jover{a}{=}  \int_{\R^d}\int_{\R^d} e^{\frac{a}{2}\big(\norm{\b x}{2}^2+\norm{\b y}{2}^2 -\norm{\b x-\b y}{2}^2\big)} \d \F(\b x)\d\F(\b y)  \\
        &\jover{b}{=}\int_{\R^d}\int_{\R^d} e^{\frac{a}{2}\norm{\b x}{2}^2}e^{\frac{a}{2}\norm{\b y}{2}^2}e^{-\frac{a}{2}\norm{\b x-\b y}{2}^2} \d \F(\b x)\d\F(\b y) 
        \jover {c}{=}\int_{\R^d}e^{\frac{a}{2}\norm{\b y}{2}^2}\Bigg(\int_{\R^d} e^{\frac{a}{2}\norm{\b x}{2}^2}e^{-\frac{a}{2}\norm{\b x-\b y}{2}^2}\d \F(\b x)\Bigg)\d\F(\b y) 
        \\&\jover {d}{=}\int_{\R^d}e^{\frac{a}{2}\norm{\b y}{2}^2}\Bigg(\int_{\R^d}e^{-\frac{a}{2}\norm{\b x-\b y}{2}^2}\d \F'(\b x)\Bigg)\d\F(\b y) 
        \jover{c}{=}\int_{\R^d}\int_{\R^d}e^{\frac{a}{2}\norm{\b y}{2}^2}e^{-\frac{a}{2}\norm{\b x-\b y}{2}^2}\d \F'(\b x)\d\F(\b y) 
        \\&\jover{e}{=}\int_{\R^d}\int_{\R^d}e^{\frac{a}{2}\norm{\b y}{2}^2}e^{-\frac{a}{2}\norm{\b x-\b y}{2}^2}\d \F(\b y)\d\F'(\b x) 
        \jover{f}{=}\int_{\R^d}\int_{\R^d} e^{-\frac{a}{2}\norm{\b x-\b y}{2}^2}\d \F'(\b y)\d\F'(\b x) \jover g > 0, \label{eq:exp_is_ispd}
    \end{align}
    where (a) holds as $\norm{\b x-\b y}{2}^2 = \norm{\b x}{2}^2  + \norm{\b y}{2}^2- 2\fip{\b x,\b y}{}$, and hence  $\fip{\b x,\b y}{} =  \frac{1}{2}\big(\norm{\b x}{2}^2+\norm{\b y}{2}^2 -\norm{\b x-\b y}{2}^2\big)$, while the properties of exponential function ($e^{a+b}=e^{a} e^{b}$ with $a,b\in \R$) give (b), the homogeneity of the integral yields (c), (d) follows from applying \eqref{eq:exp_change_meas} inside the parentheses [valid as $f\coloneq e^{\frac{a}{2}\norm{\cdot}{2}^2} \in L_1(\R^d,|\F|)$ by \eqref{eq:exp_finite_int} and $g_{\b y} \coloneq e^{-\frac{a}{2}\norm{\cdot -\b y}{2}^2}\in L_\infty(\R^d,|\F|)$ as it is bounded for any fixed $\b y \in \R^d$], (e) is by applying Tonelli's theorem [recalled in Theorem~\ref{th:Fubini}; valid as $F(\b x, \b y) \coloneq f(\b y) e^{-\frac{a}{2}\norm{\b x - \b y}{2}^2}$ is positive for all $(\b x,\b y) \in \R^d\times \R^d$], and (f) holds by applying \eqref{eq:exp_change_meas} again in the inner integral [with the same choice of $f$ and $g_{\b y}$ as in (d)]. Lastly, we note that $k(\b x,\b y) \coloneq e^{-\frac{a}{2}\norm{\b x - \b y}{2}^2}$ ($\b x,\b y\in \R^d$) is the well-known Gaussian kernel with $\gamma = a/2$. The Gaussian kernel is $\M_1^+(\R^d)$-characteristic \citep[Theorem 9 and Table 2]{sriperumbudur10hilbert}, and hence $\M_b(\R^d)$-i.s.p.d.\ [\citealt[Figure 1(4)]{sriperumbudur11universality}], proving (g) (valid as $\F'\neq 0$ since $f>0$ and $\F\neq0$ by assumption).
\end{proof}

The following lemma extends a known result by \citet[Theorem~8(i)-(iii)]{simon-gabriel18kernel} to the unbounded case.
\begin{lemmaA}[Shifted $\mP^p(K;\X)$-characteristic kernels are $\mPb^p(K;\X)$-i.s.p.d.]\label{lem:kc_ispd} 
Let $(\X,\tau_\X)$ be a topological space, $p\in[1,\infty)$, $c>0$, and $K:\X\times \X\to \R$ a $\mP^p(K;\X)$-characteristic kernel. Then, for the kernel $K_c \coloneq K + c$, $\mPb(K;\X) = \mPb(K_c;\X)$ and $K_c$ is $\mPb^p(K_c;\X)$-i.s.p.d.\footnote{Note that $K_c$ is known to be a kernel.}
\end{lemmaA}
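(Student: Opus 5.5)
The proof has two parts: first the moment sets of $K$ and $K_c$ coincide, then the double integral of $K_c$ splits into that of $K$ plus $c\,\F(\X)^2$.

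\textbf{Step 1: equality of the moment sets.} The key identity is
\[
\norm{K_c(\cdot,x)}{\H_{K_c}}^2=K_c(x,x)=K(x,x)+c .
\]
For $t\ge0$ and $c>0$ we have the elementary sandwich
\[
\sqrt{t}\le\sqrt{t+c}\le\sqrt{t}+\sqrt{c}.
\]
Integrating it against $|\F|$, which is finite, shows that $\int_\X \norm{K(\cdot,x)}{\H_K}\d|\F|(x)<\infty$ iff $\int_\X\norm{K_c(\cdot,x)}{\H_{K_c}}\d|\F|(x)<\infty$. Hence $\mPb(K;\X)=\mPb(K_c;\X)$. The same argument with the $p$-th powers uses
\[
(t+c)^{p/2}\le 2^{p/2}\max\{t^{p/2},c^{p/2}\}\le 2^{p/2}\bigl(t^{p/2}+c^{p/2}\bigr)
\]
and monotonicity in the other direction. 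It gives $\mPb^p(K;\X)=\mPb^p(K_c;\X)$ and $\mP^p(K;\X)=\mP^p(K_c;\X)$, which we need below.

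\textbf{Step 2: splitting the double integral.} Take $\F\in\mPb^p(K_c;\X)\setminus\{0\}=\mPb^p(K;\X)\setminus\{0\}$. Then $\F\in\mPb(K;\X)$, so $\mu_K(\F)$ exists and $\int\!\!\int|K|\d|\F|\d|\F|<\infty$, because $|K(x,y)|\le\sqrt{K(x,x)K(y,y)}$. Linearity of the integral then gives
\[
\int_\X\int_\X K_c(x,y)\d\F(x)\d\F(y)=\norm{\mu_K(\F)}{\H_K}^2+c\,\F(\X)^2 .
\]
Both terms are nonnegative, so it suffices to show that one of them is strictly positive.

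\textbf{Step 3: case split on the total mass.} If $\F(\X)\neq0$, the second term is positive and we are done.

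If $\F(\X)=0$, then $\F\in[\mPb^p(K;\X)]^0\setminus\{0\}$. Theorem~\ref{th:char_p1_iff_pb0} upgrades the assumed $\mP^p(K;\X)$-characteristic property to $[\mPb^p(K;\X)]^0$-characteristic. Theorem~\ref{th:ispd0_iff_char}, applicable since $0\in[\mPb^p(K;\X)]^0$, then gives $[\mPb^p(K;\X)]^0$-i.s.p.d. Therefore $\norm{\mu_K(\F)}{\H_K}^2>0$.

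In both cases the double integral of $K_c$ against $\F$ is positive, so $K_c$ is $\mPb^p(K_c;\X)$-i.s.p.d.

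\textbf{Where care is needed.} The only delicate point is the bookkeeping in Step 1: the identification $\mPb^p(K_c;\X)=\mPb^p(K;\X)$ must hold for general $p$, not only $p=1$, so that the characteristic hypothesis, stated for $\mP^p(K;\X)$, transfers. The rest is the decomposition $\int\!\!\int K_c\d\F\d\F=\norm{\mu_K(\F)}{\H_K}^2+c\F(\X)^2$.
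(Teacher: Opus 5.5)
Your proof is correct and follows essentially the same route as the paper: equality of the moment sets, the splitting $\int\!\!\int K_c\,\mathrm{d}\mathbb{F}\,\mathrm{d}\mathbb{F}=\|\mu_K(\mathbb{F})\|^2+c\,\mathbb{F}(\mathcal{X})^2$, and a case split on $\mathbb{F}(\mathcal{X})$ that uses Theorem~\ref{th:char_p1_iff_pb0} and then Theorem~\ref{th:ispd0_iff_char}. The differences are cosmetic: you use the bound $(t+c)^{p/2}\le 2^{p/2}(t^{p/2}+c^{p/2})$ where the paper uses $\sqrt{a+b}\le\sqrt{a}+\sqrt{b}$ plus Minkowski, and you justify integrability via $|K(x,y)|\le\sqrt{K(x,x)K(y,y)}$ and correctly note that only $0\in\mathscr{F}$ is needed for the characteristic-to-i.s.p.d.\ direction.
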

\begin{proof}[Proof (of Lemma~\ref{lem:kc_ispd})]
We first show that $\mPb^p(K;\X) = \mPb^p(K_c;\X)$. 

\begin{itemize}
    \item[($\subseteq$)]   Consider an arbitrary $\F\in\mPb^p(K;\X)$. In this case, we have 
    \begin{align}
        \MoveEqLeft\Bigg(\int_\X K_c^{p/2}(x,x) \d|\F|(x)\Bigg)^{1/p} 
        = \Bigg(\int_\X \big[K(x,x) + c\big]^{p/2}\d|\F| (x)\Bigg)^{1/p}\jover a \leq \Bigg(\int_\X \Big[\sqrt{K(x,x)} + \sqrt{c}\Big]^{p}\d|\F| (x)\Bigg)^{1/p}
        \\
        &\jover b \leq \Bigg( \underbrace{\int_\X K^{p/2}(x,x)\d|\F| (x)}_{\eqcolon A}\Bigg)^{1/p} + \Bigg(\int_\X c^{p/2}\d|\F| (x)\Bigg)^{1/p}
        \jover c = A^{1/p} + \underbrace{c^{1/2}|\F|^{1/p}(\X)}_{\eqcolon B} \jover d< \infty,
    \end{align}
    where (a) follows from $\sqrt{a+b}\leq \sqrt{a} + \sqrt{b}$ holding for $a,b\ge 0$, (b) comes from the well-known Minkowski inequality, and (c) holds by the homogeneity of the integral. $B<\infty$ as $\F$ is a finite measure and $c>0$, and $A<\infty$ as $\F\in\mPb^p(K;\X)$, giving (d). Therefore, $\F\in\mPb^p(K_c;\X)$.
    
    \item[($\supseteq$)] Consider an arbitrary $\F\in\mPb^p(K_c;\X)$. Then, 
    \begin{align}
        \infty \jover a > \int_\X K_c^{p/2}(x,x) \d|\F|(x) = \int_\X \big[K(x,x) + c\big]^{p/2} \d|\F|(x) \jover{b}{ \ge} \int_{\X} K^{p/2}(x,x)\d|\F|(x),
    \end{align}
    where (a) comes from $\F\in\mPb^p(K_c;\X)$, and (b) is implied by $K(x,x) + c > K(x,x)$ (since $c>0$) and the monotonicity of integration. This means that $\F\in\mPb^p(K;\X)$.
\end{itemize}
Having established the equality $\mPb^p(K;\X) = \mPb^p(K_c;\X)$, we now proceed with proving our  statement. Indeed, for any $\F \in \mPb^p(K_c;\X)\setminus\{0\}$, 
\begin{align}
     \int_\X \int_\X K_c(x,y) \d \F(x)\d \F(y) \jover{a}{=} \int_\X \int_\X K(x,y) + c~ \d \F(x)\d \F(y) \jover{b}{=} \underbrace{\norm{\mu_K(\F)}{\H_K}^2}_{\eqcolon C} + \underbrace{c[\F(\X)]^2}_{\eqcolon D}, \label{eq:kc_ispd}
\end{align}
where (a) holds by the definition of $K_c$, (b) follows from the linearity of the integral and \eqref{eq:def:ispd}. Note that if $\F(\X)\neq 0$, then $D>0$ by $c>0$, proving the statement; therefore, assume that $\F(\X)= 0$. In this case, $D=0$ and $\F\in[\mPb^p(K;\X)]^0\setminus\{0\}$ [as $\mPb^p(K_c;\X) = \mPb^p(K;\X)$].  Note that since $K$ is $\mP^p(K;\X)$-characteristic, it is also $[\mPb^p(K;\X)]^0$-characteristic (by Theorem~\ref{th:char_p1_iff_pb0}); hence, it is $[\mPb^p(K;\X)]^0$-i.s.p.d.\ [by Theorem~\ref{th:ispd0_iff_char}, using that $([\mPb^p(K;\X)]^0,+)$ is a group]. Therefore, $C+D = C = \norm{\mu_K(\F)}{\H_K}^{2}>0$ in \eqref{eq:kc_ispd} for any $\F\in\mPb^p(K_c;\X)\setminus\{0\}$, proving the statement. 
\end{proof}

\section{EXTERNAL RESULTS}

In this section we include the main external results used through our proofs. Proposition~\ref{prop:equiv_int_k_rho} relates the sets $\mPb^p(K;\X)$ and $\mPb^p(\rho_{K};\X)$, Theorem~\ref{th:Fubini} contains the well-known Fubini-Tonelli theorem, Theorem~\ref{th:ingo_4.26} shows important properties of the $S_K$ operator, and, lastly, Lemma~\ref{lem:hoelder} contains a generalized version of Hölder's inequality. 

\begin{propositionA}[Integrability of kernels and semimetrics; \citealt{sejdinovic13equivalence}, Proposition 20]\label{prop:equiv_int_k_rho}
    Let $K:\X^2\to \R$ be a kernel that generates the semimetric $\rho_K$, and let $p\in\N$. Then $\mPb^p(K;\X)=\mPb^p(\rho_K;\X)$.
\end{propositionA}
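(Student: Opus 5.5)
The plan is to reduce the statement to a pointwise comparison between $K^{p/2}(x,x)$ and $\rho_K^{p/2}(x,x_0)$, and then integrate against $|\F|$. By the reproducing property and the definition in \eqref{eq:K2rho}, $\rho_K(x,x_0)=\norm{K(\cdot,x)-K(\cdot,x_0)}{\H_K}^2$. Hence $\rho_K^{1/2}(x,x_0)$ is the distance between $K(\cdot,x)$ and $K(\cdot,x_0)$ in $\H_K$, while $K^{1/2}(x,x)=\norm{K(\cdot,x)}{\H_K}$. Both sets $\mPb^p(K;\X)$ and $\mPb^p(\rho_K;\X)$ are defined through $|\F|$-integrability of the $p$-th power of these quantities. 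So the whole argument rests on the triangle inequality in $\H_K$ combined with finiteness of $|\F|$.

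For the inclusion $\mPb^p(K;\X)\subseteq\mPb^p(\rho_K;\X)$, fix any $x_0\in\X$. The triangle inequality gives
\begin{align}
\rho_K^{1/2}(x,x_0)\le \norm{K(\cdot,x)}{\H_K}+\norm{K(\cdot,x_0)}{\H_K}.
\end{align}
Raising both sides to the power $p$ and using $(a+b)^p\le 2^{p-1}(a^p+b^p)$ for $a,b\ge0$ yields
\begin{align}
\int_\X\rho_K^{p/2}(x,x_0)\,\d|\F|(x)\le 2^{p-1}\left[\int_\X K^{p/2}(x,x)\,\d|\F|(x)+K^{p/2}(x_0,x_0)\,|\F|(\X)\right]<\infty.
\end{align}
Alternatively, Minkowski's inequality in $L_p(\X,|\F|)$ gives the same conclusion directly.

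For the reverse inclusion, take the $x_0$ provided by the definition of $\F\in\mPb^p(\rho_K;\X)$. The triangle inequality in the other direction gives
\begin{align}
\norm{K(\cdot,x)}{\H_K}\le\rho_K^{1/2}(x,x_0)+\norm{K(\cdot,x_0)}{\H_K}.
\end{align}
The same power inequality then bounds $\int_\X K^{p/2}(x,x)\,\d|\F|(x)$ by a constant times $\int_\X\rho_K^{p/2}(x,x_0)\,\d|\F|(x)$ plus $K^{p/2}(x_0,x_0)|\F|(\X)$, and both terms are finite. Measurability of the integrands follows from the Borel measurability of $K$.

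I do not expect a serious obstacle. The only points needing care are the following. First, the existential quantifier over $x_0$ in the definition of $\mPb^p(\rho;\X)$ must be handled correctly: any $x_0$ works in the first direction, while the given $x_0$ is used in the second. Second, the argument should not actually need $p\in\N$; it works for every $p\in[1,\infty)$, so the integer restriction inherited from \citet{sejdinovic13equivalence} is harmless. Third, non-degeneracy of $K$ is used only to ensure that $\rho_K$ is a genuine semimetric; the integrability comparison does not depend on it.
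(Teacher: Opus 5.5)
Your argument is correct. The paper gives no proof of its own and defers to \citet[Proposition~20]{sejdinovic13equivalence}. Your core idea is the standard one: write $\rho_K^{1/2}(x,x_0)=\norm{K(\cdot,x)-K(\cdot,x_0)}{\H_K}$ and $K^{1/2}(x,x)=\norm{K(\cdot,x)}{\H_K}$, compare them by the triangle inequality in $\H_K$, and integrate against the finite measure $|\F|$.

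In the cited source, the restriction to integer exponents comes from passing from $\rho_K$-moments back to $K$-moments through a binomial expansion of integer powers, with lower-order moments controlled along the way. You instead use $\norm{K(\cdot,x)}{\H_K}\le\rho_K^{1/2}(x,x_0)+\norm{K(\cdot,x_0)}{\H_K}$ together with $(a+b)^p\le 2^{p-1}(a^p+b^p)$ (or Minkowski in $L_p(\X,|\F|)$). This handles both inclusions symmetrically and gives $\mPb^p(K;\X)=\mPb^p(\rho_K;\X)$ for every real $p\ge 1$. It also shows that finiteness of $\int_\X\rho_K^{p/2}(x,x_0)\,\d|\F|(x)$ for one $x_0$ implies finiteness for all $x_0$.

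This generality matters for the paper. Theorem~\ref{th:snt_iff_ispd} only needs $\mF\subseteq[\mPb^2(K;\X)]^0$, and $\mPb^p(K;\X)\subseteq\mPb^2(K;\X)$ for $p\ge 2$. So your version would let Remark~\ref{rem:6}\ref{rem:6:2}, and the arrow labelled $p\in\N_{\ge 2}$ in Figure~\ref{fig:summary}, be stated for all real $p\in[2,\infty)$.
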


\begin{theoremA}[Fubini-Tonelli; \citealt{folland99real}, Theorem 2.37.b]\label{th:Fubini}
    Suppose that $(\mathcal{X},\mathcal{M},\nu_1)$ and $(\mathcal{Y},\mathcal{N},\nu_2)$ are $\sigma$-finite (non-negative) measure spaces.  If $ F:  \mathcal{X}\times \mathcal{Y} \to \Rnn$, then
    \begin{equation}
        \int_{\mathcal{X}\times \mathcal{Y}} F(x,y)\d(\nu_1\otimes\nu_2)(x, y) = \int_\mathcal{X} \left[\int_\mathcal{Y} F(x,y) \d \nu_2(y) \right]\d \nu_1 (x) = \int_\mathcal{Y} \left[\int_\mathcal{X} F(x,y) \d \nu_1(x) \right]\d \nu_2 (y).
    \end{equation}
     
\end{theoremA}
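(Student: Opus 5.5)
This is the Fubini--Tonelli theorem, quoted from \citet[Theorem 2.37.b]{folland99real}. I would not prove it from scratch; I would follow the standard route through the monotone class lemma and monotone convergence.

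\textbf{Step 1 (indicators of rectangles).} For a measurable rectangle $A\times B$ and $F=\mathbb{1}_{A\times B}$, both iterated integrals equal $\nu_1(A)\nu_2(B)$. By the definition of the product measure $\nu_1\otimes\nu_2$ on $\mathcal M\otimes\mathcal N$, this is also $(\nu_1\otimes\nu_2)(A\times B)$.

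\textbf{Step 2 (indicators of measurable sets).} Assume first that $\nu_1$ and $\nu_2$ are finite. Let $\mathcal C$ be the class of sets $E\in\mathcal M\otimes\mathcal N$ with the following two properties:
\begin{itemize}
\item the sections give measurable maps $x\mapsto\nu_2(E_x)$ and $y\mapsto\nu_1(E^y)$;
\item both iterated integrals of $\mathbb{1}_E$ equal $(\nu_1\otimes\nu_2)(E)$.
\end{itemize}
$\mathcal C$ contains finite disjoint unions of rectangles, which form an algebra. It is closed under increasing unions by monotone convergence, and under decreasing intersections by dominated convergence, using finiteness. The monotone class lemma then gives $\mathcal C=\mathcal M\otimes\mathcal N$.

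For the $\sigma$-finite case, I would exhaust $\mathcal X\times\mathcal Y$ by increasing rectangles $X_n\times Y_n$ of finite measure and pass to the limit by monotone convergence. This step is the main obstacle: measurability of the section maps and the use of $\sigma$-finiteness, which fails without it, are both concentrated here.

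\textbf{Step 3 (general non-negative functions).} By linearity, the identities extend to non-negative simple functions. For a measurable $F\ge0$, I would take simple functions $0\le s_k\uparrow F$. Applying monotone convergence three times, to the inner integrals, the outer integrals and the product integral, gives the equality of all three expressions, possibly equal to $+\infty$. This holds for every non-negative $\mathcal M\otimes\mathcal N$-measurable $F$, which is the measurability implicitly assumed in the statement.
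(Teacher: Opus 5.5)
Your proposal is correct and is essentially the standard argument behind the cited reference, which the paper itself quotes without proof. That argument is the monotone class lemma for indicators (finite measures first, then $\sigma$-finite via exhaustion), followed by simple-function approximation and monotone convergence, as in Folland's Theorems~2.36--2.37. The only ingredient you use tacitly is that the sections $E_x$ and $E^y$ of every $E\in\mathcal M\otimes\mathcal N$, and hence the sections of a measurable $F$, are measurable; you should state this before writing $\nu_2(E_x)$ and the inner integrals.
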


\begin{theoremA}[Properties of $S_K$; \citealt{steinwart08support}, Theorem~4.26]\label{th:ingo_4.26} 
    Let $(\X,\tau_\X)$ be a topological space, $p\in[1,\infty)$, $\P\in\mP^p(K;\X)$, and $\H_K$ a separable RKHS over $\X$ with kernel $K:\X^2\to\R$. Then, $\H_K\subseteq L_p(\X,\P)$, and $\H_K$ is dense in $L_p(\X,\P)$ iff $S_K:L_{p'}(\X,\P)\to\H_K$ is injective.  
\end{theoremA}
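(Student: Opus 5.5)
The statement right above is Theorem~\ref{th:ingo_4.26}, an external result (\citealt[Theorem~4.26]{steinwart08support}), so I only need to reconstruct its standard duality proof. I will prove the two claims in order: first the inclusion $\H_K\subseteq L_p(\X,\P)$ with a bounded inclusion operator $\id$, then the identification $\id^*=S_K$, and finally a Hahn--Banach argument for the density criterion.

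\textbf{Inclusion.} For $h\in\H_K$ the reproducing property and Cauchy--Schwarz give $|h(x)|=|\fip{h,K(\cdot,x)}{\H_K}|\le\norm{h}{\H_K}K^{1/2}(x,x)$. Hence
\begin{align}
\int_\X|h(x)|^p\d\P(x)\le\norm{h}{\H_K}^p\int_\X K^{p/2}(x,x)\d\P(x)<\infty,
\end{align}
using $\P\in\mP^p(K;\X)$. Measurability of $h$ comes from separability of $\H_K$: $h$ is a pointwise limit of finite combinations $\sum_i a_iK(\cdot,x_i)$, each measurable because $K$ is Borel. This shows that $\id$ is bounded with the norm bound recalled in Section~\ref{sec:notations}.

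\textbf{Adjoint.} For $g\in L_{p'}(\X,\P)$, the map $x\mapsto g(x)K(\cdot,x)$ is Bochner integrable in $\H_K$. Indeed, separability of $\H_K$ gives strong measurability via Pettis, and Hölder gives $\int\norm{K(\cdot,x)}{\H_K}|g(x)|\d\P(x)<\infty$. Define $S_Kg\coloneq\int_\X g(y)K(\cdot,y)\d\P(y)\in\H_K$. Evaluating at $x$ and swapping the inner product with the Bochner integral recovers \eqref{eq:S_K}. For $h\in\H_K$ the same swap yields
\begin{align}
\fip{h,S_Kg}{\H_K}=\int_\X h(y)g(y)\d\P(y)=\fip{g,\id h}{L_{p'},L_p}.
\end{align}
After the Riesz identification of $\H_K^*$ with $\H_K$ and the duality $L_p^*=L_{p'}$ (valid for $p\in[1,\infty)$ and finite $\P$), this says $S_K=\id^*$.

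\textbf{Density.} Hahn--Banach shows that a subspace $\id(\H_K)$ of the Banach space $L_p(\X,\P)$ is dense iff its annihilator in $L_p^*=L_{p'}$ is trivial. The annihilator is
\begin{align}
\{g:\fip{g,\id h}{}=0\ \forall h\}=\{g:\fip{h,S_Kg}{\H_K}=0\ \forall h\}=\ker S_K.
\end{align}
So density holds iff $S_K$ is injective, i.e.\ $\overline{\operatorname{ran}\id}=L_p\iff\ker\id^*=\{0\}$.

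The main obstacle is the measurability and Bochner-integrability bookkeeping in the adjoint step, which is exactly where separability of $\H_K$ is needed. The case $p=1$, $p'=\infty$ needs the duality $L_1^*=L_\infty$; this holds because $\P$ is finite, hence $\sigma$-finite.
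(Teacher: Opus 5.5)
The paper does not prove this statement; it imports it as an external result from \citet[Theorem~4.26]{steinwart08support}. Your reconstruction is correct and is the standard argument of that source: the pointwise bound $|h(x)|\le\norm{h}{\H_K}K^{1/2}(x,x)$ for the inclusion, the Bochner definition of $S_K$ with $\fip{h,S_Kg}{\H_K}=\int_\X hg\,\d\P$, and then $\ker S_K=(\operatorname{ran}\id)^{\perp}$ via Hahn--Banach and $L_p(\X,\P)^*\cong L_{p'}(\X,\P)$.

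One small correction: measurability of each $h\in\H_K$ does not actually need separability. The span of $\{K(\cdot,x)\}_{x\in\X}$ is norm-dense in any RKHS, so a norm-convergent, hence pointwise-convergent, approximating sequence always exists. Separability is genuinely needed only for the strong measurability of $x\mapsto K(\cdot,x)$ via Pettis in the adjoint step, as you correctly point out at the end.
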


\begin{lemmaA}[A generalized Hölder's inequality; \citealt{folland99real}, Exercise 6.31]\label{lem:hoelder}
    Let $(\X,\mB_\X,\nu)$ be a measure space with a  finite non-negative measure $\nu$, $p_m\in[1,\infty]$ ($m\in [M]$; with the convention $\frac{1}{\infty}\coloneq 0$) and $\sum_{m= 1}^M \frac{1}{p_m} \eqcolon \frac{1}{r} \leq 1$. If $f_m\in L_{p_m}(\X,\nu)$ for $m\in[M]$, then $\prod_{m= 1}^M f_m \in L_r(\X,\nu)$ and $\norm{\prod_{m= 1}^M f_m }{L_r}\le \prod_{m= 1}^M \norm{f_m}{L_{p_m}}$.  
\end{lemmaA}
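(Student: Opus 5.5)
The plan is to prove the generalized Hölder inequality (Lemma~\ref{lem:hoelder}) by induction on $M$, reducing everything to the classical two-function Hölder inequality applied to suitable powers of $|f_m|$.

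First I would treat the base cases. For $M=1$ we have $r=p_1$, so the claim $\norm{f_1}{L_r}\le\norm{f_1}{L_{p_1}}$ is an equality and there is nothing to prove. For $M=2$ I would split by whether the exponents are infinite.
\begin{itemize}
\item If $p_2=\infty$, then $r=p_1$. The pointwise bound $|f_1f_2|\le |f_1|\,\norm{f_2}{L_\infty}$ holds $\nu$-a.e., and monotonicity of the integral gives $\norm{f_1f_2}{L_r}\le\norm{f_1}{L_{p_1}}\norm{f_2}{L_\infty}$.
\item The case $p_1=\infty$ is symmetric. If both exponents are infinite, then $r=\infty$ and the bound holds trivially.
\item If $p_1,p_2<\infty$, then $r<\infty$. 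Set $q_m\coloneq p_m/r$. The assumption $\frac1{p_1}+\frac1{p_2}=\frac1r$ gives $\frac1{q_1}+\frac1{q_2}=1$, and in particular $q_1,q_2\in(1,\infty)$ (or $q_m=1$ with the other equal to $\infty$, which is already covered). Applying the classical Hölder inequality to $|f_1|^r\in L_{q_1}$ and $|f_2|^r\in L_{q_2}$ yields $\int|f_1f_2|^r\,\d\nu\le \norm{f_1}{L_{p_1}}^r\norm{f_2}{L_{p_2}}^r$. Taking $r$-th roots gives the claim, including the membership $f_1f_2\in L_r(\X,\nu)$.
\end{itemize}

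For the inductive step with $M\ge3$, I would define $s$ by $\frac1s\coloneq\sum_{m=1}^{M-1}\frac1{p_m}$. Since $\frac1s\le\frac1r\le1$, we have $s\in[r,\infty]$ with $s\ge1$. So the induction hypothesis applies to $f_1,\dots,f_{M-1}$ and gives $g\coloneq\prod_{m<M}f_m\in L_s(\X,\nu)$ together with $\norm{g}{L_s}\le\prod_{m<M}\norm{f_m}{L_{p_m}}$. Because $\frac1s+\frac1{p_M}=\frac1r\le1$, the $M=2$ case applied to $g$ and $f_M$ gives $\norm{gf_M}{L_r}\le\norm{g}{L_s}\norm{f_M}{L_{p_M}}$. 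Chaining the two bounds finishes the induction. Measurability of the products is immediate, since products of measurable functions are measurable.

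I expect the only real obstacle to be bookkeeping in the $M=2$ step: confirming that the rescaled exponents $q_m=p_m/r$ are genuinely conjugate and at least $1$, and handling the degenerate infinite exponents separately. The finiteness of $\nu$ is not actually needed for the argument; it only guarantees that the relevant spaces behave as in the statement.
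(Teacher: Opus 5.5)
Your proof is correct and complete. The paper gives no proof of this lemma; it simply imports it from Folland's Exercise~6.31. Your argument is the standard solution of that exercise: rescale to $q_m=p_m/r$ so that the two-function case becomes the classical conjugate-exponent Hölder inequality, then induct via $\frac{1}{s}=\sum_{m<M}\frac{1}{p_m}$.

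Two small remarks. First, in the branch $p_1,p_2<\infty$ the parenthetical alternative $q_m=1$ can never occur, since it would force the other exponent to be $\infty$; you can drop it. Second, you are right that finiteness of $\nu$ is never used. It only appears in the statement because the paper defines $L_p(\X,\nu)$ for finite $\nu$.
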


%% file: main_arxiv.bbl
\begin{thebibliography}{51}
\providecommand{\natexlab}[1]{#1}
\providecommand{\url}[1]{\texttt{#1}}
\expandafter\ifx\csname urlstyle\endcsname\relax
  \providecommand{\doi}[1]{doi: #1}\else
  \providecommand{\doi}{doi: \begingroup \urlstyle{rm}\Url}\fi

\bibitem[Anastasiou et~al.(2023)Anastasiou, Barp, Briol, Ebner, Gaunt,
  Ghaderinezhad, Gorham, Gretton, Ley, Liu, Mackey, Oates, Reinert, and
  Swan]{anastasiou23stein}
Andreas Anastasiou, Alessandro Barp, Fran\c{c}ois-Xavier Briol, Bruno Ebner,
  Robert~E. Gaunt, Fatemeh Ghaderinezhad, Jackson Gorham, Arthur Gretton,
  Christophe Ley, Qiang Liu, Lester Mackey, Chris~J. Oates, Gesine Reinert, and
  Yvik Swan.
\newblock {S}tein's method meets computational statistics: a review of some
  recent developments.
\newblock \emph{Statistical Science}, 38\penalty0 (1):\penalty0 120--139, 2023.

\bibitem[Aronszajn(1950)]{aronszajn50theory}
Nachman Aronszajn.
\newblock Theory of reproducing kernels.
\newblock \emph{Transactions of the American Mathematical Society},
  68:\penalty0 337--404, 1950.

\bibitem[Baringhaus and Franz(2004)]{baringhaus04new}
Ludwig Baringhaus and C.~Franz.
\newblock On a new multivariate two-sample test.
\newblock \emph{Journal of Multivariate Analysis}, 88:\penalty0 190--206, 2004.

\bibitem[Barp et~al.(2024)Barp, Simon-Gabriel, Girolami, and
  Mackey]{barp24targeted}
Alessandro Barp, Carl-Johann Simon-Gabriel, Mark Girolami, and Lester Mackey.
\newblock Targeted separation and convergence with kernel discrepancies.
\newblock \emph{Journal of Machine Learning Research}, 25\penalty0
  (378):\penalty0 1--50, 2024.

\bibitem[Bass(2024)]{bass11real}
Richard~F. Bass.
\newblock \emph{Real Analysis for Graduate Students: Measure and Integration
  Theory}.
\newblock 2024.

\bibitem[Berlinet and Thomas-Agnan(2004)]{berlinet04reproducing}
Alain Berlinet and Christine Thomas-Agnan.
\newblock \emph{Reproducing Kernel Hilbert Spaces in Probability and
  Statistics}.
\newblock Kluwer, 2004.

\bibitem[Carmeli et~al.(2010)Carmeli, Vito, Toigo, and
  Umanit{\'a}]{carmeli10vector}
Claudio Carmeli, Ernesto~De Vito, Alessandro Toigo, and Veronica Umanit{\'a}.
\newblock Vector valued reproducing kernel {H}ilbert spaces and universality.
\newblock \emph{Analysis and Applications}, 8:\penalty0 19--61, 2010.

\bibitem[Chen(2021)]{chen21stein}
Louis H.~Y. Chen.
\newblock {S}tein's method of normal approximation: Some recollections and
  reflections.
\newblock \emph{Annals of Statistics}, 49\penalty0 (4):\penalty0 1850--1863,
  2021.

\bibitem[Chwialkowski et~al.(2016)Chwialkowski, Strathmann, and
  Gretton]{chwialkowski16kernel}
Kacper Chwialkowski, Heiko Strathmann, and Arthur Gretton.
\newblock A kernel test of goodness of fit.
\newblock In \emph{International Conference on Machine Learning (ICML)}, pages
  2606--2615, 2016.

\bibitem[Cribeiro-Ramallo et~al.(2026{\natexlab{a}})Cribeiro-Ramallo, Aich,
  Kalinke, Aich, and Szab{\'o}]{cribeiro26minimax}
Jose Cribeiro-Ramallo, Agnideep Aich, Florian Kalinke, Ashit~Baran Aich, and
  Zolt{\'a}n Szab{\'o}.
\newblock The minimax lower bound of kernel {S}tein discrepancy estimation.
\newblock In \emph{International Conference on Artificial Intelligence and
  Statistics (AISTATS)}, pages 901--909, 2026{\natexlab{a}}.

\bibitem[Cribeiro-Ramallo et~al.(2026{\natexlab{b}})Cribeiro-Ramallo, Kalinke,
  and Szab{\'o}]{cribeiro26minimax2}
Jose Cribeiro-Ramallo, Florian Kalinke, and Zolt{\'a}n Szab{\'o}.
\newblock Minimax lower bounds of kernel discrepancy estimation: {MMD}, {HSIC},
  {KSD}.
\newblock Technical report, 2026{\natexlab{b}}.
\newblock (\url{https://arxiv.org/abs/2607.24235}).

\bibitem[Diestel and Uhl(1977)]{diestel77vector}
Joseph Diestel and John~Jerry Uhl.
\newblock \emph{Vector Measures}.
\newblock American Mathematical Society. Providence, 1977.

\bibitem[Dudley(2004)]{dudley04real}
Richard Dudley.
\newblock \emph{Real Analysis and Probability}.
\newblock Cambridge University Press, 2004.

\bibitem[Folland(1999)]{folland99real}
Gerald~B. Folland.
\newblock \emph{Real Analysis -- Modern Techniques and Their Applications}.
\newblock John Wiley \& Sons, 1999.

\bibitem[Fukumizu et~al.(2008)Fukumizu, Gretton, Sun, and
  Sch{\"o}lkopf]{fukumizu08kernel2}
Kenji Fukumizu, Arthur Gretton, Xiaohai Sun, and Bernhard Sch{\"o}lkopf.
\newblock Kernel measures of conditional dependence.
\newblock In \emph{Advances in Neural Information Processing Systems
  (NeurIPS)}, pages 498--496, 2008.

\bibitem[Gogolashvili(2026)]{gogolashvili26minimax}
Davit Gogolashvili.
\newblock Minimax estimation of kernel {S}tein discrepancy: Trace versus
  {H}ilbert-{S}chmidt scales.
\newblock Technical report, 2026.
\newblock (\url{https://arxiv.org/abs/2607.03367}).

\bibitem[Gretton et~al.(2005{\natexlab{a}})Gretton, Bousquet, Smola, and
  Sch{\"o}lkopf]{gretton05measuring}
Arthur Gretton, Olivier Bousquet, Alex Smola, and Bernhard Sch{\"o}lkopf.
\newblock Measuring statistical dependence with {H}ilbert-{S}chmidt norms.
\newblock In \emph{Algorithmic Learning Theory (ALT)}, pages 63--78,
  2005{\natexlab{a}}.

\bibitem[Gretton et~al.(2005{\natexlab{b}})Gretton, Herbrich, Smola, Bousquet,
  and Sch{\"o}lkopf]{gretton05kernel}
Arthur Gretton, Ralf Herbrich, Alexander Smola, Olivier Bousquet, and Bernhard
  Sch{\"o}lkopf.
\newblock Kernel methods for measuring independence.
\newblock \emph{Journal of Machine Learning Research}, 6\penalty0
  (70):\penalty0 2075--2129, 2005{\natexlab{b}}.

\bibitem[Gretton et~al.(2012)Gretton, Borgwardt, Rasch, Sch{\"o}lkopf, and
  Smola]{gretton12kernel}
Arthur Gretton, Karsten Borgwardt, Malte Rasch, Bernhard Sch{\"o}lkopf, and
  Alexander Smola.
\newblock A kernel two-sample test.
\newblock \emph{Journal of Machine Learning Research}, 13\penalty0
  (25):\penalty0 723--773, 2012.

\bibitem[Hagrass et~al.(2026)Hagrass, Sriperumbudur, and
  Balasubramanian]{hagrass26stein}
Omar Hagrass, Bharath Sriperumbudur, and Krishnakumar Balasubramanian.
\newblock Minimax optimal goodness-of-fit testing with kernel {S}tein
  discrepancy.
\newblock \emph{Bernoulli}, 32\penalty0 (1):\penalty0 299--324, 2026.

\bibitem[Kalinke et~al.(2025)Kalinke, Szab{\'{o}}, and
  Sriperumbudur]{kalinke25nystromksd}
Florian Kalinke, Zolt{\'{a}}n Szab{\'{o}}, and Bharath~K. Sriperumbudur.
\newblock Nyström kernel {S}tein discrepancy.
\newblock In \emph{International Conference on Artificial Intelligence and
  Statistics (AISTATS)}, pages 388--396, 2025.

\bibitem[Klebanov(2005)]{klebanov05ndistance}
Lev Klebanov.
\newblock \emph{N-Distances and Their Applications}.
\newblock Charles University, Prague, 2005.

\bibitem[Kozarzewski(2018)]{kozarzewski18existence}
Piotr~A. Kozarzewski.
\newblock On existence of the support of a {B}orel measure.
\newblock \emph{Demonstratio Mathematica}, 51\penalty0 (1):\penalty0 76--84,
  2018.

\bibitem[Liu et~al.(2016)Liu, Lee, and Jordan]{liu16kernelized}
Qiang Liu, Jason Lee, and Michael Jordan.
\newblock A kernelized {Stein} discrepancy for goodness-of-fit tests.
\newblock In \emph{International Conference on Machine Learning (ICML)}, pages
  276--284, 2016.

\bibitem[Lyons(2013)]{lyons13distance}
Russell Lyons.
\newblock Distance covariance in metric spaces.
\newblock \emph{The Annals of Probability}, 41:\penalty0 3284--3305, 2013.

\bibitem[Modeste and Dombry(2024)]{modeste24characterization}
Thibault Modeste and Cl{{\'e}}ment Dombry.
\newblock Characterization of translation invariant {MMD} on {$\R^d$} and
  connections with {W}asserstein distances.
\newblock \emph{Journal of Machine Learning Research}, 25\penalty0
  (237):\penalty0 1--39, 2024.

\bibitem[Paulsen and Raghupathi(2016)]{paulsen16introduction}
Vern~I. Paulsen and Mrinal Raghupathi.
\newblock \emph{An Introduction to the Theory of Reproducing Kernel Hilbert
  Spaces}.
\newblock Cambridge University Press, 2016.

\bibitem[Pfister et~al.(2018)Pfister, B{\"u}hlmann, Sch{\"o}lkopf, and
  Peters]{pfister18kernel}
Niklas Pfister, Peter B{\"u}hlmann, Bernhard Sch{\"o}lkopf, and Jonas Peters.
\newblock Kernel-based tests for joint independence.
\newblock \emph{Journal of the Royal Statistical Society Series B: Statistical
  Methodology}, 80\penalty0 (1):\penalty0 5--31, 2018.

\bibitem[Quadrianto et~al.(2009)Quadrianto, Song, and
  Smola]{quadrianto09kernelized2}
Novi Quadrianto, Le~Song, and Alex Smola.
\newblock Kernelized sorting.
\newblock In \emph{Advances in Neural Information Processing Systems
  (NeurIPS)}, pages 1289--1296, 2009.

\bibitem[Robinson(2020)]{robinson20functionalanalysis}
James~C. Robinson.
\newblock \emph{An Introduction to Functional Analysis}.
\newblock Cambridge University Press, 2020.

\bibitem[Rudin(1987)]{rudin87real}
Walter Rudin.
\newblock \emph{Real and complex analysis}.
\newblock McGraw-Hill Book Co., New York, 3rd edition, 1987.

\bibitem[Saitoh and Sawano(2016)]{saitoh16theory}
Saburou Saitoh and Yoshihiro Sawano.
\newblock \emph{Theory of Reproducing Kernels and Applications}.
\newblock Springer Singapore, 2016.

\bibitem[Sch{\"o}lkopf et~al.(1998)Sch{\"o}lkopf, Smola, and
  M{\"u}ller]{scholkopf98nonlinear}
Bernhard Sch{\"o}lkopf, Alexander Smola, and Klaus-Robert M{\"u}ller.
\newblock Nonlinear component analysis as a kernel eigenvalue problem.
\newblock \emph{Neural Computation}, 10\penalty0 (5):\penalty0 1299--1319,
  1998.

\bibitem[Schrab(2025)]{schrab25discrepancies}
Antonin Schrab.
\newblock A practical introduction to kernel discrepancies: {MMD}, {HSIC} \&
  {KSD}.
\newblock Technical report, 2025.
\newblock (\url{https://arxiv.org/abs/2503.04820}).

\bibitem[Sejdinovic et~al.(2013{\natexlab{a}})Sejdinovic, Gretton, and
  Bergsma]{sejdinovic13kernel2}
Dino Sejdinovic, Arthur Gretton, and Wicher Bergsma.
\newblock A kernel test for three-variable interactions.
\newblock In \emph{Advances in Neural Information Processing Systems
  (NeurIPS)}, pages 1124--1132, 2013{\natexlab{a}}.

\bibitem[Sejdinovic et~al.(2013{\natexlab{b}})Sejdinovic, Sriperumbudur,
  Gretton, and Fukumizu]{sejdinovic13equivalence}
Dino Sejdinovic, Bharath Sriperumbudur, Arthur Gretton, and Kenji Fukumizu.
\newblock Equivalence of distance-based and {RKHS}-based statistics in
  hypothesis testing.
\newblock \emph{Annals of Statistics}, 41:\penalty0 2263--2291,
  2013{\natexlab{b}}.

\bibitem[Simon-Gabriel and Sch{\"o}lkopf(2018)]{simon-gabriel18kernel}
Carl-Johann Simon-Gabriel and Bernhard Sch{\"o}lkopf.
\newblock Kernel distribution embeddings: Universal kernels, characteristic
  kernels and kernel metrics on distributions.
\newblock \emph{Journal of Machine Learning Research}, 19\penalty0
  (44):\penalty0 1--29, 2018.

\bibitem[Simon-Gabriel et~al.(2023)Simon-Gabriel, Barp, Sch{\"o}lkopf, and
  Mackey]{simon-gabriel23metrizing}
Carl-Johann Simon-Gabriel, Alessandro Barp, Bernhard Sch{\"o}lkopf, and Lester
  Mackey.
\newblock Metrizing weak convergence with maximum mean discrepancies.
\newblock \emph{Journal of Machine Learning Research}, 24\penalty0
  (184):\penalty0 1--20, 2023.

\bibitem[Smola et~al.(2007)Smola, Gretton, Song, and
  Sch{\"o}lkopf]{smola07hilbert}
Alexander Smola, Arthur Gretton, Le~Song, and Bernhard Sch{\"o}lkopf.
\newblock A {H}ilbert space embedding for distributions.
\newblock In \emph{Algorithmic Learning Theory (ALT)}, pages 13--31, 2007.

\bibitem[Sriperumbudur et~al.(2010)Sriperumbudur, Gretton, Fukumizu,
  Sch{\"o}lkopf, and Lanckriet]{sriperumbudur10hilbert}
Bharath Sriperumbudur, Arthur Gretton, Kenji Fukumizu, Bernhard Sch{\"o}lkopf,
  and Gert Lanckriet.
\newblock Hilbert space embeddings and metrics on probability measures.
\newblock \emph{Journal of Machine Learning Research}, 11:\penalty0 1517--1561,
  2010.

\bibitem[Sriperumbudur et~al.(2011)Sriperumbudur, Fukumizu, and
  Lanckriet]{sriperumbudur11universality}
Bharath Sriperumbudur, Kenji Fukumizu, and Gert Lanckriet.
\newblock Universality, characteristic kernels and {RKHS} embedding of
  measures.
\newblock \emph{Journal of Machine Learning Research}, 12:\penalty0 2389--2410,
  2011.

\bibitem[Stein(1972)]{stein72bound}
Charles Stein.
\newblock A bound for the error in the normal approximation to the distribution
  of a sum of dependent random variables.
\newblock In \emph{Berkeley Symposium on Mathematical Statistics and
  Probability}, pages 583--602, 1972.

\bibitem[Steinwart(2001)]{steinwart01influence}
Ingo Steinwart.
\newblock On the influence of the kernel on the consistency of support vector
  machines.
\newblock \emph{Journal of Machine Learning Research}, 6\penalty0 (3):\penalty0
  67--93, 2001.

\bibitem[Steinwart and Christmann(2008)]{steinwart08support}
Ingo Steinwart and Andreas Christmann.
\newblock \emph{Support Vector Machines}.
\newblock Springer, 2008.

\bibitem[Steinwart and Ziegel(2021)]{steinwart21strictly}
Ingo Steinwart and Johanna Ziegel.
\newblock Strictly proper kernel scores and characteristic kernels on compact
  spaces.
\newblock \emph{Applied and Computational Harmonic Analysis}, 51:\penalty0
  510--542, 2021.

\bibitem[Szab{\'o} and Sriperumbudur(2018)]{szabo18characteristic2}
Zolt{\'a}n Szab{\'o} and Bharath~K. Sriperumbudur.
\newblock Characteristic and universal tensor product kernels.
\newblock \emph{Journal of Machine Learning Research}, 18\penalty0
  (233):\penalty0 1--29, 2018.

\bibitem[Sz{\'e}kely and Rizzo(2004)]{szekely04testing}
G{\'a}bor Sz{\'e}kely and Maria Rizzo.
\newblock Testing for equal distributions in high dimension.
\newblock \emph{InterStat}, 5:\penalty0 1249--1272, 2004.

\bibitem[Sz{\'e}kely and Rizzo(2005)]{szekely05new}
G{\'a}bor Sz{\'e}kely and Maria Rizzo.
\newblock A new test for multivariate normality.
\newblock \emph{Journal of Multivariate Analysis}, 93:\penalty0 58--80, 2005.

\bibitem[Vapnik et~al.(1997)Vapnik, Golowich, and Smola]{vapnik97support}
Vladimir Vapnik, Steven Golowich, and Alex Smola.
\newblock Support vector method for function approximation, regression
  estimation and signal processing.
\newblock In \emph{Advances in Neural Information Processing Systems
  (NeurIPS)}, pages 281--287, 1997.

\bibitem[Wheeden and Zygmund(2015)]{wheeden15measure}
Richard~L. Wheeden and Antoni Zygmund.
\newblock \emph{Measure and Integral: An Introduction to Real Analysis}.
\newblock CRC Press, 2015.

\bibitem[Zinger et~al.(1992)Zinger, Kakosyan, and
  Klebanov]{zinger92characterization}
Abram~A. Zinger, Ashot Kakosyan, and Lev Klebanov.
\newblock A characterization of distributions by mean values of statistics and
  certain probabilistic metrics.
\newblock \emph{Journal of Soviet Mathematics}, 1992.

\end{thebibliography}
